\newtheorem{theorem}{Theorem}
\numberwithin{theorem}{subsection}
\newtheorem{lemma}[theorem]{Lemma}
\newtheorem{prop}[theorem]{Proposition}
\newcommand{\Z}{\mathbb{Z}}
\newcommand{\N}{\mathbb{N}}
\newcommand{\script}{\mathcal}
\newcommand{\parentheses}[1]{{\left( {#1} \right)}}
\newcommand{\p}{\parentheses}
\newcommand{\closure}[1]{\overline{#1}}
\newcommand{\Set}[1]{{\left\lbrace {#1} \right\rbrace}}
\newcommand{\singleton}{\Set}
\def\set#1:#2{\Set{{#1} \colon {#2}}}
\title{$n$-Arc Connected Graphs}
\author{Paul Gartside}
\address{Department
    of Mathematics, University of Pittsburgh, Pittsburgh, PA~15260, USA}
\email{gartside@math.pitt.edu} 
\author{Ana Mamatelashvili}
\address{School of Mathematics and Statistics, University of Melbourne, Melbourne,  VIC~3010, Australia}
\email{ana.mamatelashvili@unimelb.edu.au}
\author{Max Pitz}
\address{Department of Mathematics, University of Hamburg, Bundesstra\ss e 55, 20146 Hamburg, Germany}
\email{max.pitz@uni-hamburg.de}
\keywords{$n$-arc connectedness; $n$-Hamilton; infinite 1-complex; infinite graph; Menger's theorem, alternating paths}
\subjclass[2010]{Primary: 05C45, 05C38. Secondary: 05C40, 05C63.}    
\begin{document}

\begin{abstract}
Given a graph $G$, of arbitrary size and unbounded vertex degree, denote by $|G|$ the one-complex associated with $G$. The topological space $|G|$ is \emph{$n$-arc connected} ($n$-ac) if every set of no more than $n$ points of $|G|$ are contained in an arc (a homeomorphic copy of the closed unit interval). 

For any graph $G$, we show the following are equivalent: (i) $|G|$ in $7$-ac, (ii) $|G|$ is $n$-ac for all $n$, and (iii) $G$ is a subdivision of one of nine graphs. A graph $G$ has $|G|$ $6$-ac if and only if either $G$ is one of the nine $7$-ac graphs, or, after suppressing all degree-2-vertices, the graph $G$ is $3$-regular, $3$-connected, and removing any $6$ edges does not disconnect $G$ into $4$ or more components. 

Similar combinatorial characterizations of graphs $G$ such that $|G|$ is $n$-ac for $n=3, 4$ and $5$ are given. Together these results yield a complete classification of $n$-ac graphs, for all $n$.
\end{abstract}

\maketitle


\section{Introduction}

Graphs are typically considered as combinatorial objects: a set of vertices, along with a set of un-ordered pairs of vertices, forming  edges abstractly connecting the vertices; but it is equally natural to consider graphs as geometric objects with a set of vertices and some pairs of vertices literally connected by an arc (a homeomorphic copy of the closed unit interval). 
Indeed right at the birth of graph theory, with Euler's solution of the K\"{o}nigsberg Bridges problem -- asking for a particular kind of physical path -- and a little later with Hamilton's solution of his Icosian problem -- requesting an arc, or circle, in the skeleton of a dodecahedron, containing all vertices -- the geometric view is the most immediate. 
When we think of a graph geometrically (a $1$-complex), then the points on edges become \emph{first class citizens}, and this change in perspective opens up new classes of problems. In this paper we always consider a given combinatorial graph as a geometric graph with the natural underlying set, topologized in any way so that each arc forming an edge has its usual topology as a subspace (the exact topology on the graph will not turn out to be important here).

A natural extension of Hamilton's problem is to ask, for some $n$, which graphs $G$ are $n$-arc Hamilton (respectively, $n$-Hamilton) that for any choice of at most $n$ vertices there is an arc (respectively, a circle) in $G$ containing the specified points. For example, a classical theorem in graph theory of Dirac \cite[Satz~9]{dirac} says that a $n$-connected graphs are $n$-Hamilton. 
(Recall that a combinatorial graph is \emph{$k$-connected} if deleting at most $k-1$ vertices does not result in a disconnection.)
However, high connectivity isn't always necessary, indeed every cycle  is $n$-Hamilton, for all $n$, despite it being only $2$-connected. Dirac also noted in \cite{dirac} that  $n$-connected graphs are not necessarily $(n+1)$-Hamilton. A characterization was  found in \cite{watkinsmesner}: let $G$ be an $n$-connected graph, $n \geq 3$, then $G$ is $(n+1)$-Hamilton if and only if no set $T$ of vertices of $G$ of size $n$ separates $G$ into more than $n$ components. As is well-known, despite the existence of simple sufficient conditions, there is still no characterization of Hamiltonicity. Let us also note that Egawa, Glas \& Locke \cite{EGL} gave a sufficient condition for an $n$-connected graph to be $(n+1)$-arc Hamilton, but the authors are not aware of characterizations of the $(n+1)$-arc Hamilton, $n$-connected graphs.

Taking the geometric viewpoint we are led to consider connecting \emph{arbitrary} points in a graph by arcs or
circles. Let $G$ be a graph, considered as a topological space, and $S$ a subset of $G$, then $(S,G)$ is \emph{$n$-arc connected} (or, $S$ is \emph{$n$-ac} in $G$) if for any choice of at most $n$ elements of $S$ there is an arc in $G$ containing the specified points, while $(S,G)$ is \emph{$n$-circle connected} (or, $S$ is \emph{$n$-cc} in $G$) if for any choice of at most $n$ elements of $S$ there is a simple closed curve in $G$ containing the specified points. 
Further, we say $S$ is $\omega$-ac (respectively, $\omega$-cc) in $X$ if it is $n$-ac ($n$-cc) in $X$ for all integers $n \in \N$.
Observe that a graph $G$ with vertices $V$ has a Hamiltonian path (respectively, cycle) if and only if $V$ is $|V|$-ac (respectively, $|V|$-cc) in $G$, and is $n$-arc Hamilton (respectively, $n$-Hamilton) if and only if $V$ is $n$-ac (respectively, $n$-cc) in $G$.

Define a graph $G$ to be $n$-ac (resp., $n$-cc) if $G$ is $n$-ac (resp., $n$-cc) in $G$ -- in other words, for any choice of at most $n$ points of $G$ there is an arc (resp., circle) in $G$ containing the specified points. 
In this paper we give a complete solution to the problem of characterizing which graphs are $n$-ac or $n$-cc. By `complete' we mean for any $n$, and for any graph, without restriction on the number of vertices, or edges, or the degree of any vertex. 
Our characterizations give tests for a graph to be $n$-ac or $n$-cc which are combinatorial in nature, only referring to vertices and edges, and which are polynomial in the number of vertices for finite graphs. 
The proofs largely rely on Menger-type results, and arguments based on (and in some cases, extending) the theory of alternating walks. 

In describing our results, we should start by stating that it is straightforward to see that a graph is $2$-cc if and only if it is $2$-connected, while the only $3$-cc graphs are cycles (see Theorem~\ref{thm_27} and preceding discussion). Hence our focus is on $n$-ac graphs, for some $n$. It is also clear that a graph $G$ is $2$-ac if and only if it is connected (combinatorially). 
In \cite{acpaper} it was shown that a non-degenerate finite connected graph $G$ is $7$-ac if and only if it is $\omega$-ac if and only if $G$ is homeomorphic to one of $6$ graphs (arc, circle, lollipop, $\theta$-curve, figure-of-eight, dumbbell).
 Extending that argument shows that this behaviour  occurs also for arbitrary graphs. Indeed (see Theorem~\ref{7ac1complex}), a non-degenerate graph $G$ is $7$-ac if and only if it is  $n$-ac for all $n$, and if and only $G$ is homeomorphic to a finite list of graphs, namely homeomorphic to one of the six finite $7$-ac graphs, or one of the finite $7$-ac graphs with some endpoints removed (giving three more possibilities).

It remains, then, to characterize the $n$-ac graphs for $n=3,4,5$ and $6$. In \cite{acpaper} infinite families of finite graphs which are $n$-ac but not $(n+1)$-ac were given for all $2 \le n \le 6$, and the problem of characterizing finite $n$-ac graphs for $n=3,4,5$ and $6$ was raised. 
Theorems~\ref{thm_27} and~\ref{thm_3acChain}, \ref{cyclic_4ac} and~\ref{chain_4-ac},  \ref{chain_5-ac} and~\ref{thm_5acChar}, and \ref{thm_6acChar} solve that problem for arbitrary graphs.  
As a sample: a graph $G$ is $6$-ac if and only if either $G$ is one of the nine $7$-ac graphs mentioned above, or, after suppressing all degree-2-vertices, the combinatorial graph $G$ is $3$-regular, $3$-connected, and removing any $6$ edges does not disconnect $G$ into $4$ or more components.

\section{Preliminaries}
\label{sec_1}

In this paper, the term \emph{graph} refers to a combinatorial graph $G=(V,E)$ where $V$ is a (possibly infinite) set and $E \subseteq [V]^2$. However, to every combinatorial graph $G=(V,E)$ we associate the topological space, $|G|$, which is the $1$-complex of $G$, namely the quotient space $\p{V \oplus \bigoplus_{e \in E} [0,1]_e } / \sim$ where $V$ carries the discrete topology and for an edge $e = \Set{v,w} \in E$ we identify $v$ in $V$ with the $0 \in [0,1]_e$ and $w$ with $1 \in [0,1]_e$. 
In fact our results hold whenever the set $|G|$ is given a topology in which the image under the quotient of each $[0,1]_e$ is homeomorphic to $[0,1]$.  (For example, the metric topology induced by vertex distance, extended to interior points of edges in the natural manner, would work equally well. The quotient topology is simply the finest one satisfying this property.) 
Where no confusion can arise -- when we discuss purely topological notions, for example -- we abuse notation and simply write $G$ for $|G|$.

\subsection{Notation and Conventions} 
Let $G=(V,E)$ be a graph. An edge, $e=\Set{v,w}$ is often abbreviated, $e=vw$. By convention we label \emph{fixed} vertices by $a, b, \ldots$, and \emph{general} vertices as $v,w$ \textit{et cetera}. 
Let $V = A \dot\cup B$ a partition of its vertex set. The set $E(A,B)$ of edges of $G$ with one endpoint in $A$ and the other in $B$ is called an \emph{edge-cut} of $G$.

For $A \subseteq V$ or $F \subseteq E$ we write $G[A]$ and $G[F]$ for the induced subgraph of $G$.

A subset $e$ of $|G|$ is called a \emph{closed edge} if it is the image under the quotient map of some $[0,1]_e$, and is called an \emph{edge} if it is a closed edge minus its endpoints. In particular note that edges are  open sets. If $e = vw\in E(G)$ then $\closure{e} = \Set{v,w} \cup e \subseteq |G|$ is the closed edge in $|G|$ naturally associated with the combinatorial edge $e$ in $G$. By convention we label points in the space $|G|$ by $x,y, \ldots$.

\subsection{Background from graph theory}

Our main tools from graph theory will be the block-cutvertex decomposition of (possibly infinite) connected graphs, and certain variants of Menger's theorem, especially the ones involving the concept of alternating walks. A good overview of these techniques is given, for example, in the chapter on connectivity of Diestel's book, \cite[\S~3.1 \& 3.3]{Diestel}. 

 A graph $G=(V,E)$ is \emph{cyclically connected} if every two vertices lie on a cycle, and, recall, is \emph{$2$-connected} if removing any single vertex does not disconnect the graph. Note that a graph is cyclically connected if,  in our terminology, the vertices are $2$-cc in the graph.
  Observe that, according to this definition, the complete graph on two vertices is $2$-connected but not cyclically connected. However, by Menger's theorem stated below, any $2$-connected graph with at least $3$ vertices is cyclically connected.

\subsubsection{Block-cutvertex decomposition} Let $G$ be a  connected graph. A \emph{block} of $G$ is an inclusion-maximal $2$-connected subgraph. Every edge is contained in a block, and by their maximality, different blocks overlap in at most one vertex, which then must be a cut-vertex of $G$. Therefore, the blocks form an edge-disjoint decomposition of $G$. Let $C\subseteq V$ denote the set of cut-vertices of $G=(V,E)$ and $\mathcal{B}$ the collection of blocks. The \emph{block graph} $B(G)$ of $G$ is the bipartite graph formed on the vertex set $C \dot\cup \script{B}$ with an edge $cB \in E(B(G))$ if and only if $c \in B$. For the proof of the next lemma see \cite[Lemmas~3.1.3 \& 3.1.4]{Diestel}.

\begin{lemma}
\label{lem_blockgraph}
The block graph of a (possibly infinite) connected graph is a (possibly infinite) graph-theoretic tree. 
\end{lemma}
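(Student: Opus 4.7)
The plan is to prove the two defining properties of a tree: connectedness and acyclicity. Since $B(G)$ is already bipartite, we do not need to worry about loops or multi-edges, and the finiteness of any hypothetical cycle means the infinite case reduces to the same argument as the finite one.

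For connectedness, I would first record the standard observation that every non-cut-vertex of $G$ lies in a unique block: if $v$ were in two distinct blocks $B_1, B_2$, then $B_1 \cup B_2$ would still be $2$-connected (removing any single vertex other than $v$ from $B_1 \cup B_2$ leaves a connected subgraph, and removing $v$ leaves $(B_1 - v) \cup (B_2 - v)$ which is still connected provided $v$ is not a cut-vertex), contradicting the inclusion-maximality of $B_1$ and $B_2$. Then, given any two vertices $x,y$ of $B(G)$, I can replace each block-vertex by a choice of vertex of $G$ inside that block and reduce to the case $x,y \in V(G)$. A path $P = v_0 v_1 \ldots v_n$ in $G$ between them may be traced through a sequence of blocks $B_{i_1}, B_{i_2}, \ldots$ (each edge of $P$ lies in some block, and transitions between consecutive blocks must occur at cut-vertices since non-cut-vertices belong to a unique block); writing this sequence out as an alternating walk in $B(G)$ between cut-vertices and blocks yields the desired connection.

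For acyclicity, I would argue by contradiction. Assume $B(G)$ contains a cycle, which by bipartiteness has the form $B_1 c_1 B_2 c_2 \cdots B_k c_k B_1$ with $k \ge 2$ and all $B_i$, $c_i$ distinct. Since each block $B_i$ is $2$-connected and contains the two distinct cut-vertices $c_{i-1}, c_i$ (indices mod $k$), I can apply Menger's theorem (or just $2$-connectedness) to find in $B_i$ an internally disjoint path $P_i$ from $c_{i-1}$ to $c_i$. Concatenating $P_1, P_2, \ldots, P_k$ produces a cycle $Z$ in $G$. I then claim $H = B_1 \cup \cdots \cup B_k$ is itself $2$-connected: any two vertices of $H$ lie in some pair $B_i, B_j$, each of which is internally $2$-connected, and the cycle $Z$ glues these blocks together through two internally disjoint routes around $Z$, yielding two internally disjoint paths between any two chosen vertices. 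This contradicts the maximality of $B_1$ as a $2$-connected subgraph.

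The main technical point to get right is the gluing argument showing $H$ is $2$-connected; I would phrase it carefully as ``for any vertex $v \in H$, the graph $H - v$ is connected,'' using the cycle $Z$ to bypass $v$ when $v$ sits inside some $B_i$. Apart from that, both parts are essentially bookkeeping and carry over verbatim to infinite $G$ because cycles and the paths $P_i$ involved are finite objects.
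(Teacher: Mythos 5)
The paper itself gives no proof of this lemma -- it simply cites Diestel (Lemmas 3.1.3 and 3.1.4) -- so there is no in-paper argument to compare against; your overall strategy (connectedness via tracing paths of $G$ through blocks, acyclicity via showing that a cycle of blocks would merge into a single larger $2$-connected subgraph) is exactly the standard textbook route. The acyclicity half is essentially sound: the right formulation is, as you say, to show $H-v$ is connected for every $v\in H=B_1\cup\dots\cup B_k$, splitting into the cases $v=c_i$ and $v$ internal to some $B_i$; you do not actually need the concatenated closed walk $Z$ to be a simple cycle for this (non-consecutive blocks may share a stray cut-vertex, so $Z$ need not be simple, but the $H-v$ argument is indifferent to that), and since $H\supsetneq B_1$ this contradicts maximality of $B_1$.

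There is, however, a genuine error in the connectedness half, in your proof that a non-cut-vertex $v$ lies in a unique block. You assert that if $v\in B_1\cap B_2$ then $B_1\cup B_2$ is $2$-connected because $(B_1-v)\cup(B_2-v)$ ``is still connected provided $v$ is not a cut-vertex.'' This conflates connectivity in $G-v$ with connectivity of the subgraph $(B_1\cup B_2)-v$ in its own edges. Since distinct blocks meet in at most one vertex and no edge of $E(B_1)\cup E(B_2)$ joins $B_1-v$ to $B_2-v$, the subgraph $(B_1\cup B_2)-v$ is the disjoint union of $B_1-v$ and $B_2-v$ and is therefore \emph{disconnected}; so $B_1\cup B_2$ is never $2$-connected in this situation, and your intended contradiction evaporates. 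The standard repair: since $v$ is not a cut-vertex, $G-v$ contains a $(B_1-v)$--$(B_2-v)$ path $P$, and one checks that $B_1\cup P\cup B_2$ is $2$-connected (removing $v$ leaves $B_1-v$, $B_2-v$ and $P$ glued together; removing any other vertex $w$ leaves everything attached through $v$ and the surviving pieces of $P$), which properly contains $B_1$ and so contradicts its maximality. With that fact in hand, the rest of your connectedness argument (each edge of a $v_0$--$v_n$ path lies in a block, and consecutive edges in different blocks meet at a vertex lying in two blocks, hence a cut-vertex, giving an alternating walk in $B(G)$) goes through as written.
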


\subsubsection{Chain graphs and cycle graphs} A connected graph is called a \emph{chain graph} if it is a linearly ordered union (possibly just one) of subgraphs (called \emph{links}) such that only consecutive graphs meet, and their intersection consists of a single vertex only (called the \emph{linking vertex}). Thus, a connected graph $G$ is a chain graph of $2$-connected links if and only if its block graph $B(G)$ is a finite path, a ray, or a double ray. 

Similarly, a connected graph $G$ is a \emph{cycle graph} if $G$ is a union of graphs $L_0, L_1, \ldots,   L_{n-1}$ (called \emph{links}) for some $n\in \N$ where (a) $L_i \cap L_j = \emptyset$ if $|i-j| > 1 \mod n$ and (b)  $L_i$ meets $L_j$ at a single vertex (called the \emph{linking vertex}) if $|i-j|=1 \mod n$.  

\subsubsection{Menger's theorem and alternating walks}

We say a graph $G=(V,E)$ is $k$-connected for some $k\in \N$ if the induced subgraph $G-W$ is connected for all $W \subseteq V$ with $|W|<k$. We note that even if a graph $G$ is $k$-connected for some large $k$, the underlying topological 1-complex $|G|$ is never $3$-connected in the topological sense, as removing two endpoints of an edge disconnects the interior of that edge from the rest of the graph.

Given sets $A,B$ of vertices, we call a path [walk] $P=x_0, \ldots, x_n$ an $A-B$ \emph{path} [\emph{walk}] if $V(P) \cap A = \Set{x_0}$ and $V(P) \cap B = \Set{x_n}$, i.e.\ if the path [walk] starts in $A$, ends in $B$ and is otherwise disjoint from $A \cup B$. Two or more paths are \emph{independent} if none of them contains an inner vertex of another. If $A,B \subseteq V$ and $X \subseteq V$ is such that every $A-B$ path in $G$ contains a vertex from $X$, then we say that $X$ \emph{separates} the vertex set $A$ from $B$. This implies $A \cap B \subseteq X$, i.e.\ $X$ does not have to be disjoint from $A$ or $B$.

\begin{theorem}[Menger's Theorem]
\label{menger}
Let $G=(V,E)$ be a (potentially infinite) graph and $A,B \subseteq V$. Then the minimum number of vertices separating $A$ from $B$ in $G$ is equal to the maximum number of disjoint $A-B$ paths in $G$.
\end{theorem}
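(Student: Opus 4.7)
The first inequality is the easy one: any $A-B$ separator $X$ must meet every path in a family $\script{P}$ of pairwise disjoint $A-B$ paths, and disjointness of the paths forces these intersection vertices to be distinct, so $|X| \geq |\script{P}|$. The content of the theorem is therefore the reverse inequality: producing, from a minimum separator of size $k$, a system of $k$ pairwise disjoint $A-B$ paths.

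For the finite case, I would use the alternating-walks framework the paper has just introduced. Let $\script{P}$ be a system of pairwise disjoint $A-B$ paths of maximum size $\ell$; the goal is to show $\ell \geq k$. Consider the set $W$ of vertices reachable from $A \setminus V(\script{P})$ by walks that alternate between edges outside $\script{P}$ and edges of $\script{P}$ traversed against their orientation. A parity/back-tracking analysis shows that either some such walk reaches $B \setminus V(\script{P})$, in which case rerouting $\script{P}$ along it yields $\ell + 1$ pairwise disjoint $A-B$ paths (contradicting the maximality of $\script{P}$), or the frontier vertices of $W$ form an $A-B$ separator of size exactly $\ell$, forcing $\ell \geq k$. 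Alternatively the finite case can be established by induction on $|E(G)|$, contracting or deleting a well-chosen edge and invoking the inductive hypothesis.

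The infinite case splits according to whether $k$ is finite or infinite. When $k$ is finite, fix a minimum separator $X$ with $|X|=k$; since every $A-B$ path meets $X$, it suffices to exhibit $k$ pairwise disjoint $A-X$ paths and $k$ pairwise disjoint $X-B$ paths, which then concatenate along $X$. To obtain the former, I would exhaust $G$ by an increasing chain of finite subgraphs $G_n \supseteq X$, apply the finite Menger theorem inside each $G_n$ (after verifying that $X$ remains a minimum $A \cap V(G_n)$-to-$X$ separator in $G_n$), and use K\"onig's infinity lemma on the finitely branching tree of partial $k$-path systems to extract a limiting system of $k$ disjoint $A-X$ paths in $G$. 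When $k$ is infinite, I would apply Zorn's lemma to the family of systems of pairwise disjoint $A-B$ paths ordered by inclusion: a maximal system of cardinality strictly less than $k$ would, by the alternating-walks argument from the finite case, produce a separator smaller than $k$, contradicting the minimality of $k$.

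The main obstacle is the compactness step in the finite-$k$ case: one must set up the tree of partial witnesses carefully so that K\"onig's lemma yields $k$ disjoint paths \emph{simultaneously}, rather than merely one path at a time. A delicate point is that a naive enumeration of candidate paths need not converge, so I would organise the tree by the traces of candidate path-systems on each finite $G_n$ and verify the minimum-separator condition is inherited, ensuring finite Menger applies at every level.
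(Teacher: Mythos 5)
The paper does not actually prove this theorem: it cites Diestel (Theorem~3.3.1 for finite graphs and Proposition~8.4.1 for the infinite extension), so there is no in-paper argument to compare against step by step. Your easy inequality and your finite case are correct, and the alternating-walk augmentation you describe is exactly the mechanism the paper axiomatises immediately afterwards in Lemmas~\ref{lem_existalt1} and~\ref{lem_existencealternatingwalk}.

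The genuine gap is in your infinite case with $k$ finite. First, an increasing chain of finite subgraphs $G_n$ indexed by the naturals cannot exhaust an uncountable graph, so K\"onig's lemma is unavailable at the stated level of generality. Second, and more seriously, the condition you need at each level --- that $X$ remains a \emph{minimum} $A\cap V(G_n)$-to-$X$ separator inside $G_n$ --- is false in general: a finite subgraph omits most of the connections of $G$ and typically admits much smaller separators, so finite Menger applied inside $G_n$ need not return $k$ disjoint paths. You flag this as the delicate point but do not resolve it, and no naive choice of the $G_n$ fixes it. The repair is that no compactness is needed at all: an alternating walk is a finite object, so the augmentation step of your finite case applies verbatim in an infinite graph. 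If $\mathcal{P}$ is a family of $\ell<k$ disjoint $A$--$B$ paths and no alternating walk reaches $B\setminus\bigcup\mathcal{P}$, Lemma~\ref{lem_existalt1} yields a separator of size $\ell<k$, a contradiction; otherwise Lemma~\ref{lem_existencealternatingwalk} yields $\ell+1$ disjoint paths, and finitely many iterations finish this case. For $k$ infinite your Zorn argument is sound in outline, but the separator should be extracted more simply than by ``the alternating-walks argument'' (which is not set up for infinite path systems): if a maximal disjoint family $\mathcal{P}$ has $|\mathcal{P}|<k$, then $\bigcup_{P\in\mathcal{P}}V(P)$ is already an $A$--$B$ separator, of cardinality at most $\max(|\mathcal{P}|,\aleph_0)<k$ since each path is finite.
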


\begin{proof}
See \cite[Theorem~3.3.1 \& Prop~8.4.1]{Diestel} respectively for the proof for finite graphs and its infinite extension.
\end{proof}


However, at several points in this paper we shall employ the following more algorithmic version of Menger's theorem using the notion of alternating walks. Recall that for given sets $A,B$ of vertices and $\script{P}$ a collection of disjoint $A-B$ paths, a walk $W=x_0 e_0x_1 e_1 \ldots e_{n-1}x_n$ in $G$ with no repeated edges (but possibly repeated vertices) is \emph{alternating} with respect to $\script{P}$ if 
\begin{itemize}
\item $W$ starts in $A \setminus \bigcup \script{P}$,
\item the only repeated vertices of $W$ lie on paths in $\script{P}$,
\item whenever $W$ hits a vertex of some path $Q \in \script{P}$ (meaning say $e_i \notin Q$ but $x_{i+1} \in Q$) then $P$ follows $Q$ back towards the direction of $A$ for at least one edge of $Q$, and
\item whenever $W$ uses an edge $e$ from a path $Q \in \script{P}$, then $W$ traverses this edge backwards.
\end{itemize} 
We refer the reader to \cite[Fig.~3.3.2]{Diestel} and the surrounding discussion for further information. The following two lemmas list the two crucial properties of alternating paths in the context of Menger's theorem. For the proof of the first see \cite[Lemma~3.3.3]{Diestel}.

\begin{lemma}
\label{lem_existalt1}
If no alternating walk ends in $B \setminus \bigcup \script{P}$, then $G$ contains an $A-B$ separator $X$ on $\script{P}$ with $|X| = |\script{P}|$.
\end{lemma}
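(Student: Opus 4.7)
The plan is to build the separator $X$ directly from the set of vertices reachable by alternating walks, choosing one vertex from each path of $\script{P}$ as far along it as possible. Let $R$ denote the set of all vertices at which some alternating walk with respect to $\script{P}$ terminates; the hypothesis says precisely that $R \cap (B \setminus \bigcup \script{P}) = \emptyset$. For each $Q \in \script{P}$, enumerated as $Q = y_0^Q y_1^Q \ldots y_{k_Q}^Q$ with $y_0^Q \in A$ and $y_{k_Q}^Q \in B$, set
\[ x_Q = \begin{cases} y_{j(Q)}^Q & \text{with } j(Q) = \max \Set{i : y_i^Q \in R} \text{ when this maximum exists,} \\ y_0^Q & \text{otherwise.} \end{cases} \]
Put $X = \Set{x_Q : Q \in \script{P}}$. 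Since the paths in $\script{P}$ are pairwise disjoint, $|X| = |\script{P}|$ and each $x_Q$ lies on its path $Q$, so $X$ is a candidate separator on $\script{P}$ of the prescribed size.

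The substance of the argument is the verification that $X$ actually separates $A$ from $B$, which I would handle by contradiction: given an $A$-$B$ path $P$ in $G$ with $P \cap X = \emptyset$, the aim is to build an alternating walk terminating in $B \setminus \bigcup \script{P}$. Trace $P$ from its endpoint in $A$ towards its endpoint in $B$ and splice in detours each time $P$ meets $\bigcup \script{P}$. Whenever the running walk reaches a vertex $y_i^Q$ of some $Q \in \script{P}$ from outside, first retreat one edge along $Q$ (as required by the alternating condition); then observe that $y_i^Q \in R$ via the prefix just built, so $i \leq j(Q)$, and the assumption $P \cap X = \emptyset$ forces the strict inequality $i < j(Q)$. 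This strictness is the key point: $P$ has not yet exhausted the reachable portion of $Q$, so by switching the current alternating-walk prefix to a previously constructed witness of $y_{j(Q)}^Q \in R$ and then resuming along $P$ past its exit from $Q$, the walk can always be extended. Iterating, the construction can never terminate inside $\bigcup \script{P}$, and therefore must eventually reach the endpoint of $P$ in $B \setminus \bigcup \script{P}$, contradicting the hypothesis.

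The main technical obstacle is bookkeeping the no-repeated-edge condition through these splicings, because alternating walks forbid repeated edges but allow repeated vertices. I expect to handle this by exploiting the asymmetry in the alternating-walk definition: edges lying on $\bigcup \script{P}$ can appear only traversed backwards, and by choosing at each splice a shortest witness of $y_{j(Q)}^Q \in R$, one can inductively avoid reusing edges spent earlier. No finiteness assumption on $G$ or on $\script{P}$ should be needed: every individual alternating walk is a finite sequence of edges, the definition of $R$ is purely pointwise, and selecting one vertex per path of $\script{P}$ to form $X$ involves no coherence condition across paths.
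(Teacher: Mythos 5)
The paper does not prove this lemma itself but defers to \cite[Lemma~3.3.3]{Diestel}, so the relevant comparison is with that argument. Your choice of separator has the right shape (one vertex per path of $\script{P}$, as far along it as is ``reachable'', defaulting to the initial vertex in $A$), but your reachability set $R$ is wrong: you take the vertices at which alternating walks \emph{terminate}, whereas the correct set is the vertices \emph{lying on} alternating walks. These do not coincide under the paper's definition, because a walk that hits a vertex of some $Q\in\script{P}$ from outside is required to continue backwards along $Q$ for at least one edge -- it is not permitted to stop at the vertex it has just hit. Concretely: take $A=\Set{a_1,a_2}$, $B=\Set{b_1,b_2}$ with $b_2$ isolated, $\script{P}=\Set{Q}$ with $Q=a_1qb_1$, and one extra edge $a_2q$. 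The only nontrivial alternating walk is $a_2\,q\,a_1$, so your $R\cap V(Q)=\Set{a_1}$ and $X=\Set{a_1}$, which fails to separate because of the path $a_2qb_1$; the correct choice is $x_Q=q$, the last vertex of $Q$ \emph{on} an alternating walk. (Your later appeal to ``$y_i^Q\in R$ via the prefix just built'' implicitly assumes prefixes of alternating walks are alternating, which is exactly what fails at hit points; so either $R$ is too small or your splicing steps use a notion of walk the definition does not license.)

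The second gap is in the verification that $X$ separates. Splicing in a fresh witness walk at \emph{every} encounter of $P$ with $\bigcup\script{P}$ imports, at each step, an entire new edge set that must be kept disjoint from the edges already traversed, from the remainder of $P$, and from all witnesses yet to be spliced in; moreover, after jumping to the terminal vertex $y^Q_{j(Q)}$ of a witness you must still travel backwards along $Q$ to the vertex where $P$ actually leaves $Q$, and those edges of $Q$ may already appear (traversed backwards) inside the witness itself. ``Choose a shortest witness'' does not discharge any of these obligations. The standard repair -- and the one in the cited source -- is to splice exactly once, at the \emph{last} possible place: let $y$ be the last vertex of the offending $A$--$B$ path $P$ that lies on some alternating walk or on the initial segment of some $Q\in\script{P}$ up to $x_Q$ (the first vertex of $P$ always qualifies), and prepend to the tail of $P$ after $y$ a single alternating walk, or a single backward detour along the relevant $Q$, ending at $y$. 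Maximality of $y$, together with $P\cap X=\emptyset$ and the maximality in the definition of each $x_Q$, forces that tail to avoid $\bigcup\script{P}$ and every alternating walk entirely, which reduces the no-repeated-edge and alternation checks to one finite case analysis rather than an induction with compounding side conditions.
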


\begin{lemma}
\label{lem_existencealternatingwalk}
If an alternating walk $W$ ends in $B \setminus \bigcup \script{P}$, then $G$ contains a set of disjoint paths $\script{P}'$ with $|\script{P}'|= |\script{P}| + 1$. Moreover, the alternating walk $W$ can be chosen such that 
\begin{enumerate}
\item $E(\script{P}')$ is precisely the symmetric difference $E(\script{P}) \triangle E(W)$, and
\item every path $P' \in \script{P}'$ traverses the edges of $E(P') \cap E(W)$ in the same order as $W$. 
\end{enumerate}
\end{lemma}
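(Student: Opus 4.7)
The proof is the classical augmenting-path argument familiar from the proof of Menger's theorem (Theorem~\ref{menger}). The plan is to construct $\script{P}'$ directly from the symmetric difference $H := E(\script{P}) \triangle E(W)$, by showing that the subgraph $(V,H)$ decomposes into $|\script{P}|+1$ disjoint $A$--$B$ paths together with possibly some disjoint cycles, which are discarded.

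First I would perform a local degree analysis at each vertex $v$ with respect to $H$. The four defining clauses of an alternating walk are tailored precisely so that every vertex has degree $2$ in $H$ except for the $2(|\script{P}|+1)$ ``boundary'' vertices: the $A$- and $B$-endpoints of each path in $\script{P}$, together with the initial vertex of $W$ in $A \setminus \bigcup \script{P}$ and the terminal vertex of $W$ in $B \setminus \bigcup \script{P}$. Three of the cases are straightforward (vertex off $\bigcup \script{P}$, interior vertex of some $Q \in \script{P}$ not met by $W$, or endpoint vertex); the delicate case is an interior vertex $v$ of some $Q \in \script{P}$ visited by $W$ possibly several times. There, the condition that $W$ back-tracks along $Q$ whenever it meets $Q$ (together with the restriction that $Q$-edges can only be used in reverse) pairs up each visit so that each ``excursion'' onto $Q$ at $v$ contributes zero net change to the $H$-degree of $v$. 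Once every internal vertex has even degree and every boundary vertex has odd degree~$1$, standard graph-theoretic facts force $H$ to decompose as claimed, and counting boundary vertices on the $A$-side and $B$-side yields exactly $|\script{P}|+1$ paths, establishing~(1).

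For the ``moreover'' clause (2), I would invoke a minimality argument. Choose $W$ to have minimum edge-length $|E(W)|$ amongst all alternating walks ending in $B \setminus \bigcup \script{P}$. If some $P' \in \script{P}'$ traversed two of its $W$-edges in the reverse order to $W$, say encountering $e$ before $f$ in $P'$ while $W$ used $f$ before $e$, then I would splice the initial segment of $W$ up to $e$ with the tail of $W$ after $f$ to obtain a strictly shorter alternating walk still ending in $B \setminus \bigcup \script{P}$, contradicting minimality. Because $P'$ supplies an internal route between the two splice points using only edges of $E(\script{P}) \cup (E(W) \setminus \{e,f\})$, the spliced walk still satisfies the four alternating conditions at the splice.

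I expect the main obstacle to be the bookkeeping in the degree calculation at a vertex $v$ revisited many times by $W$, where one must carefully match each entry of $W$ onto a path in $\script{P}$ with a consistent exit, always respecting the backward-traversal requirement. Verifying that the spliced walk in the minimality argument still satisfies the alternating conditions (in particular, the absence of repeated edges and the correct orientation at the splice points) is a comparable but straightforward local check.
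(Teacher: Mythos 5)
Your overall strategy --- a degree count on the symmetric difference followed by a minimality/splicing argument --- is the same as the paper's, but there is a genuine gap in how you handle property (1). You propose to show that $H = E(\script{P}) \triangle E(W)$ decomposes into $|\script{P}|+1$ disjoint $A$--$B$ paths ``together with possibly some disjoint cycles, which are discarded,'' and then assert that this establishes (1). It does not: property (1) demands that $E(\script{P}')$ be \emph{precisely} the symmetric difference, so if any cycle components are present and discarded you only get $E(\script{P}') \subsetneq E(\script{P}) \triangle E(W)$. The cycles must be shown not to exist at all, and this is exactly where the paper invokes its minimality argument: choosing $W$ with $|E(W) \setminus \bigcup_{P \in \script{P}} E(P)|$ minimal, any cycle $C$ in $G[\Delta]$ yields a detour of $W$ that can be spliced out (rerouting, if necessary, along a subpath $f_\ell,\ldots,f_1$ of some $P \in \script{P}$) to produce an alternating walk using strictly fewer off-$\script{P}$ edges, a contradiction. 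In your write-up the minimality argument appears only in the service of clause (2), so the cycle-freeness needed for clause (1) is never proved.

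A secondary but related problem is your choice of minimality measure. You minimize the total edge-length $|E(W)|$, but the splice you describe must reconnect the two splice points by routing through edges of $\script{P}$ (as you yourself note when you say ``$P'$ supplies an internal route \ldots using only edges of $E(\script{P}) \cup (E(W)\setminus\{e,f\})$''). That connecting subpath of $\script{P}$ can be arbitrarily long, so the spliced walk need not be shorter in total length and your contradiction may fail. The correct quantity to minimize is the number of edges of $W$ \emph{not} lying on $\script{P}$: the splice deletes at least one such edge and inserts only $\script{P}$-edges, so this count strictly decreases. With that measure fixed, the same splicing argument simultaneously kills the cycles (giving (1)) and any order-reversal along a path $P' \in \script{P}'$ (giving (2)), which is how the paper organizes the proof. (Your claim that every non-boundary vertex has degree $2$ in $H$ should also read degree $0$ or $2$, but that is cosmetic.)
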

\begin{proof}
The first assertion of the lemma is proved in \cite[Lemma~3.3.2]{Diestel}. However, in order to see the moreover-part, we need to recall the main idea of \cite[Lemma~3.3.2]{Diestel}. First, the definition of an alternating walk ensures that after taking the symmetric difference  $\Delta=E(\script{P}) \triangle E(W) := \bigcup_{P \in \script{P}} E(P) \triangle E(W)$, 
every vertex outside $A \cup B$ will have degree $0$ or $2$ in $G[\Delta]$, and vertices in $A$ or $B$ lying on $\script{P}$ of $W$ will continue to have degree $1$. Thus, every component of $G[\Delta]$ containing a vertex $a \in A$ will have to be a finite path starting at $a$ and ending at $A \cup B$. To see that any such path in fact has to end at a vertex of $B$, one proves the additional fact that the path traverses an edge in the symmetric difference always in the forward direction with respect to $\script{P}$ or $W$. This, clearly, yields the first assertion of the lemma.

However, there might be further components of $G[\Delta]$ which are finite cycles not incident with $A \cup B$. To eliminate the occurrence of such finite cycles (and hence to establish property (1)), we choose an alternating walk $W$ ending in $B \setminus \bigcup \script{P}$ such that $|E(W) \setminus \bigcup_{P \in \script{P}} E(P)|$ is minimal. Now suppose for a contradiction that there is a cycle $C$ in $G[\Delta]$. By the argument above, we know that traversing the edges of $C$ in the forward direction with respect to $\script{P}$ or $W$ induces a cyclic order $<$ on $E(C)$. Observe that since $(C,<)$ is a cyclic order, there are edges $e_k,e_{k+i}$ for $i \geq 0$ of $E(W)$ such that 
$e_{k+i} < f_1 < f_1 < \cdots < f_\ell < e_k$ 
is a segment of $(E(C),<)$, where $\ell \geq 0$ and $f_0,\ldots,f_\ell$ is a subpath of some path $P \in \script{P}$ (here, $\ell=0$ allows for the possibility that $e_{k+i} < e_k$ are successors in $(E(C),<)$).

If $\ell > 0$, then the walk $W' = x_0 e_0 \ldots e_{k-1} x_k f_\ell \ldots f_1 x_{k+i+1} e_{k+i+1} \ldots e_{n-1} x_n$ is an alternating walk contradicting the minimality of $W$. Otherwise, if $\ell = 0$ (so $i>0$), write $x$ for the vertex incident with both $e_{k+i}$ and $e_k$. Then by definition of alternating, there is a path $Q \in \script{P}$ with $Q=y_0 f_0y_1 f_1 \ldots f_{m-1}y_m$ such that $e_{k+i+1}=f_r$, $e_{k-1} = f_{r+1}$ and $x = y_{r+1}$. But then the walk $W' = x_0 e_0 \ldots e_{k-1} x e_{k+i+1} \ldots e_{n-1} x_n$ is an alternating walk contradicting the minimality of $W$. This contradiction shows that $\Delta$ is cycle-free, establishing (1).

It remains to argue that by choosing $|E(W) \setminus \bigcup_{P \in \script{P}} E(P)|$ minimal, we also have property (2). But if (2) fails for some path $P' \in \script{P}'$, there a segment on the path $P'$ of the form $e_{k+i} < f_1 < f_1 < \cdots < f_\ell < e_k$ where $\ell \geq 0$ and $f_0,\ldots,f_\ell$ is a subpath of some path $P \in \script{P}$, which yields a contradiction to the minimality of $W$ as before. 
\end{proof}

From Lemmas~\ref{lem_existalt1} and \ref{lem_existencealternatingwalk} we immediately deduce:
\begin{theorem}
\label{thm_existencealternatingwalk}
Let $G=(V,E)$ be a (potentially infinite) $k$-connected graph, $A,B \subseteq V$ be disjoint sets of vertices each of size at least $k$, and $\script{P}$ a collection of disjoint $A-B$ paths with $|\script{P}|=i<k$. Then there exists an alternating $A-B$ walk $W$ such that the symmetric difference $E(\script{P}) \triangle E(W)$ is precisely the edge set of a collection of $i+1$ many disjoint $A-B$ paths.
\end{theorem}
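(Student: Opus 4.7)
The plan is to deduce Theorem~\ref{thm_existencealternatingwalk} essentially as a direct consequence of the two preceding lemmas, with Menger's theorem (or rather, a simple connectivity argument) supplying the missing ingredient that rules out the ``no alternating walk'' case.

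First I would verify that no vertex set $X \subseteq V$ of size strictly less than $k$ can separate $A$ from $B$. Since $|A|, |B| \geq k > |X|$, both $A \setminus X$ and $B \setminus X$ are nonempty; and since $G$ is $k$-connected, $G - X$ is connected, so there is a path in $G - X$ from $A \setminus X$ to $B \setminus X$, contradicting that $X$ separates $A$ from $B$. Hence every $A$--$B$ separator in $G$ has size at least $k$.

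Next I would argue by contraposition using Lemma~\ref{lem_existalt1}: if no alternating walk with respect to $\script{P}$ ends in $B \setminus \bigcup \script{P}$, then that lemma yields an $A$--$B$ separator on $\script{P}$ of size $|\script{P}| = i < k$, contradicting the previous paragraph. Therefore there does exist an alternating walk $W$ ending in $B \setminus \bigcup \script{P}$. Now apply Lemma~\ref{lem_existencealternatingwalk}: this walk (after passing to one minimising $|E(W) \setminus \bigcup_{P \in \script{P}} E(P)|$, as in the proof of that lemma) produces a family $\script{P}'$ of $i+1$ disjoint $A$--$B$ paths whose edge set is precisely $E(\script{P}) \triangle E(W)$, which is exactly the conclusion sought.

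There is essentially no obstacle here: the proof is a two-line assembly once one observes that $k$-connectivity together with $|A|,|B| \geq k$ prevents small $A$--$B$ separators, so the ``dead-end'' alternative of Lemma~\ref{lem_existalt1} is impossible whenever $|\script{P}| < k$. The only thing to be a bit careful about is to record that the $W$ produced by Lemma~\ref{lem_existencealternatingwalk} automatically satisfies property (1) there, i.e.\ the symmetric difference is \emph{precisely} (not merely contains) the edge set of the new family $\script{P}'$, which is the form in which Theorem~\ref{thm_existencealternatingwalk} is stated.
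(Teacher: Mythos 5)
Your proof is correct and follows exactly the route the paper intends: the paper derives this theorem as an immediate consequence of Lemmas~\ref{lem_existalt1} and~\ref{lem_existencealternatingwalk}, with $k$-connectedness and $|A|,|B|\geq k$ ruling out the small-separator alternative. You have simply written out the details that the paper leaves implicit, including the correct observation that the ``moreover'' part of Lemma~\ref{lem_existencealternatingwalk} is what gives the symmetric difference \emph{exactly}.
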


\subsection{Background on \texorpdfstring{$n$}{n}-arc connectedness}


\begin{lemma}[{\cite[Lemma 2.6 ]{acpaper}}]
\label{tripodlemma}
Let $v$ be a vertex of a graph $G$ of degree at least $3$. If $x_0,x_1,x_2$ are interior points of distinct edges of $G$ incident with $v$, then any arc $\alpha$ containing $\{ x_0, x_1, x_2 \}$ satisfies $v \in int(\alpha)$, and one of its endpoints lies in $[v,x_0]\cup [v,x_1]\cup [v,x_2]$. \qed
\end{lemma}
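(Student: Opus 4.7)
The plan is to exploit the linear order inherited from $\alpha \cong [0,1]$ together with the local structure at $v$. Write $e_i = \Set{v, w_i}$ for the edge containing $x_i$. The key local observation I would use is this: if $x_i$ lies in the interior of $\alpha$, then the two sides of $\alpha$ at $x_i$ are connected subsets of $\alpha \setminus \Set{x_i}$ which sit initially inside $e_i$ and hence each must exit $e_i$ through one of its two endpoints $v$ or $w_i$; by injectivity of $\alpha$ the two exit vertices must be distinct.

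First I would pick the median of $x_0, x_1, x_2$ in the arc order on $\alpha$; after relabeling, call it $x_1$. A median in a linearly ordered triple is never an extreme, so $x_1 \in \interior{\alpha}$. Applying the local observation to $x_1$ tells me that one side of $\alpha$ at $x_1$ exits $e_1$ through $v$ and the other through $w_1$. Swapping $x_0$ and $x_2$ if needed, arrange it so that the side of $x_1$ toward $x_0$ exits via $v$. Then the sub-arc of $\alpha$ from $x_1$ to $x_0$ passes through $v$ strictly between those two points, proving $v \in \interior{\alpha}$.

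To locate an endpoint of $\alpha$ in $[v, x_0] \cup [v, x_1] \cup [v, x_2]$, I would track the sub-arc of $\alpha$ from $x_1$ to $x_2$. It leaves $e_1$ through $w_1$ and must eventually reach $x_2$, which is interior to $e_2$, so it has to enter $e_2$ through either $v$ or $w_2$. But $v$ has already been visited on the other side of $x_1$ and $\alpha$ is injective, so the entry must be via $w_2$. The local observation applied at $x_2$ then forces $x_2$ to be an endpoint of $\alpha$: otherwise the remaining side at $x_2$ would have to exit $e_2$ through the already-used vertex $v$. Since $x_2 \in [v, x_2]$, this gives the required endpoint.

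The main obstacle is pure bookkeeping: once injectivity of $\alpha$ is in force, the constraint that $v$ may be visited at most once propagates deterministically through the three passages at $x_0, x_1, x_2$, and each such passage is pigeonholed between the two exit vertices $v, w_i$ of the incident edge. Everything else is reading off the consequences.
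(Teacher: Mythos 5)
The paper imports this lemma from \cite{acpaper} without reproving it, so there is no in-paper argument to compare against; judged on its own terms, your overall strategy (order the three points along the arc, take the median, and pigeonhole each passage through an edge between its two endpoints $v$ and $w_i$) is sound, and the first two steps are correct: the median $x_1$ is interior to $\alpha$, its two sides must exit $e_1$ through distinct vertices by injectivity, and this places $v$ strictly between $x_1$ and $x_0$ (after relabeling), whence $v \in \interior{\alpha}$; injectivity then forces the $x_1$--$x_2$ subarc to enter $e_2$ through $w_2$.

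The last step, however, asserts something false: it is not true that $x_2$ must be an endpoint of $\alpha$. Your ``local observation'' is only valid for a side of $\alpha$ that actually leaves the edge; a side may instead terminate at an interior point of the edge. Concretely, in a $\theta$-curve with branch vertices $v,w$ and edges $e_0,e_1,e_2$, the arc that starts at a point $y$ strictly between $v$ and $x_2$ on $e_2$, runs through $x_2$ to $w$, continues along $e_1$ through $x_1$ to $v$, and out along $e_0$ past $x_0$, contains all three points but has neither $x_0$ nor $x_2$ as an endpoint. The conclusion of the lemma nevertheless survives, and your own bookkeeping delivers it with a one-line repair: if $x_2$ is not an endpoint of $\alpha$, the far side of $\alpha$ at $x_2$ heads into the segment between $x_2$ and $v$ (the $w_2$-direction being occupied by the near side), and since it cannot exit $e_2$ through the already-used vertex $v$, it must terminate at an interior point of that segment. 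In either case an endpoint of $\alpha$ lies in $[v,x_2] \subseteq [v,x_0]\cup[v,x_1]\cup[v,x_2]$, as required. You should restate the final step in this weaker (and correct) form.
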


The next lemma can be seen as a partial extension of the previous lemma:

\begin{lemma}
\label{lem_oddevencuts}
Let $G$ be a graph, and $E(A,B) = \{e_1=a_1b_1, \ldots, e_n=a_nb_n \}$ be an edge cut of $G$ with $a_i \in A$ and $b_i \in B$. Suppose that $x_i \in e_i$ are interior points. 

Then any arc containing $x_1, \ldots, x_n$ with endpoints $x_1$ and $x_n$ contains either both $[a_1,x_1]$ and $[a_n,x_n]$ or both $[x_1,b_1]$ and $[x_n,b_n]$ if $n$ is even, and it contains $[a_1,x_1]$ and $[x_n,b_n]$ or vice versa if $n$ is odd. 
\end{lemma}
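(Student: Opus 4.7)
The plan is a parity/alternation argument based on the two ``sides'' of the edge-cut. The first step is to partition the punctured space $|G| \setminus \{x_1, \ldots, x_n\}$ into the two clopen subsets
\[
 A^* := |G[A]| \cup \bigcup_{i=1}^n [a_i, x_i) \quad \text{and} \quad B^* := |G[B]| \cup \bigcup_{i=1}^n (x_i, b_i].
\]
Since $E(A,B)$ is the full edge-cut between $A$ and $B$, any path in $|G|$ from $A^*$ to $B^*$ must traverse an interior point of some $e_i$; because $x_i$ has been removed from each $e_i$, the two half-edges $[a_i, x_i)$ and $(x_i, b_i]$ are genuinely separated. Hence $A^*$ and $B^*$ are disjoint, cover $|G| \setminus \{x_1, \ldots, x_n\}$, and any connected subset of this complement lies entirely in one of them.

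Next I would apply this to the arc $\alpha$. Since $x_1$ and $x_n$ are the endpoints of $\alpha$ and $x_2, \ldots, x_{n-1}$ are interior points of $\alpha$, the set $\alpha \setminus \{x_1, \ldots, x_n\}$ is homeomorphic to $[0,1]$ with its two endpoints and $n-2$ further interior points removed, and thus breaks into exactly $n-1$ open sub-arcs $C_1, \ldots, C_{n-1}$ ordered along $\alpha$ from $x_1$ towards $x_n$. Each $C_j$ is connected and so lies in $A^*$ or in $B^*$. The key observation is that $C_{j-1}$ and $C_j$ must lie on opposite sides: at their common boundary point $x_i$ (with $i \neq 1, n$) the arc $\alpha$ has a neighbourhood in $|G|$ that is a sub-interval of $e_i$ crossing through $x_i$, because $x_i$ is an interior point both of $\alpha$ and of $e_i$; hence one of $C_{j-1}, C_j$ meets $(a_i, x_i) \subseteq A^*$ and the other meets $(x_i, b_i) \subseteq B^*$. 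By induction, the labels of $C_1, \ldots, C_{n-1}$ alternate, so $C_1$ and $C_{n-1}$ lie on the same side if and only if $n-1$ is odd, i.e.\ if and only if $n$ is even.

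Finally I would read the conclusion off the endpoint behaviour at $x_1$ and $x_n$. Near $x_1$ the arc $\alpha$ enters $e_1$ in the direction of either $a_1$ or $b_1$; and since the other endpoint $x_n$ of $\alpha$ lies in a different edge, $\alpha$ cannot terminate in the interior of $e_1$ and must continue along $e_1$ until reaching the chosen vertex. Thus $C_1 \subseteq A^*$ iff $\alpha \supseteq [a_1, x_1]$, while $C_1 \subseteq B^*$ iff $\alpha \supseteq [x_1, b_1]$; the symmetric analysis holds at $x_n$ for $C_{n-1}$. Combined with the parity statement, this yields exactly the four cases of the lemma: if $n$ is even, $C_1$ and $C_{n-1}$ lie on the same side, giving either both $[a_1, x_1]$ and $[a_n, x_n]$ or both $[x_1, b_1]$ and $[x_n, b_n]$; if $n$ is odd, $C_1$ and $C_{n-1}$ lie on opposite sides, giving $[a_1, x_1]$ with $[x_n, b_n]$ or vice versa.

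I expect the main technical obstacle to be the rigorous verification of the local-crossing claim at each interior $x_i$: one must use that $x_i$ being an interior point of the edge $e_i$ forces its neighbourhoods in $|G|$ to coincide with sub-intervals of $e_i$, together with the fact that $\alpha$ being a topological arc forces every interior point of $\alpha$ to have a neighbourhood homeomorphic to an open interval. Once this local-structure observation is pinned down, the parity bookkeeping and the endpoint case-analysis are routine.
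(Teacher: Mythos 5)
Your proposal is correct and follows essentially the same route as the paper: a parity count of how many times the arc crosses the edge-cut, with each crossing forced to pass through one of the interior points $x_2,\ldots,x_{n-1}$. Your version merely formalizes the paper's phrase ``the arc must cross backwards and forwards between $A$ and $B$ an even number of times'' via the explicit clopen partition $A^*\cup B^*$ of the punctured graph and the alternation of the subarcs $C_1,\ldots,C_{n-1}$, which is a welcome tightening but not a different argument.
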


\begin{proof}
From the given arc, fix an embedding $\alpha : [0,1] \to G$ such that $\alpha (0)=x_1$, $\alpha (1)=x_n$ and $x_2, \ldots , x_{n-1}$ are in $\alpha ([0,1])$. For concreteness let us suppose that $n$ is even and the arc $\alpha$ travels from $x_1$ along the edge $e_1$ to $a_1$ (rather than $b_1$), and so the given arc  contains $[a_1,x_1]$. We show the arc also contains $[a_n,x_n]$. The other cases are similar. 

After $a_1$, which is in $A$, the arc $\alpha$ passes through the even number of points $x_2, \ldots, x_{n-1}$ in some order, before ending at $x_n$. As it does so the arc must cross backwards and forwards between $A$ and $B$ an even number of times. Hence $\alpha$ must enter $e_n$ from $A$, in other words by passing through $a_n$, and thus the arc contains $[a_n,x_n]$, as claimed.
\end{proof}

Note that when picking an arc witnessing  that points $x_0, x_1, \ldots, x_{n-1} \in X$ lie on a common arc, we may assume that endpoints of the arc are among $x_0, x_1, \ldots, x_{n-1}$.

\begin{lemma}\label{cutting_set}
Let $X$ be a topological space. If there exists $A \subseteq X$ such that $|A| \leq n \in \N$ and $X\backslash A$ has at least $n+2$ components, then $X$ is not $(n+2)$-ac.  
\end{lemma}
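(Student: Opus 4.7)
The plan is a direct contradiction argument. I would start by choosing one point $x_i$ from each of $n+2$ distinct components $C_1, C_2, \ldots, C_{n+2}$ of $X \setminus A$; these points are automatically pairwise distinct since they lie in different components. The claim to contradict is that there is an arc $\alpha \subseteq X$ containing $\{x_1, \ldots, x_{n+2}\}$.

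Next I would fix a homeomorphism $\alpha : [0,1] \to \alpha([0,1]) \subseteq X$ and pull $A$ back to $[0,1]$. Injectivity of $\alpha$ gives $|\alpha^{-1}(A)| \leq |A| \leq n$, so the open set $[0,1] \setminus \alpha^{-1}(A)$ is a disjoint union of at most $n+1$ (sub)intervals $I_1, \ldots, I_k$ with $k \leq n+1$. Each $I_j$ is connected, and since $\alpha$ is a homeomorphism onto its image, each $\alpha(I_j)$ is a connected subset of $X \setminus A$, hence contained in a single component of $X \setminus A$. Therefore $\alpha([0,1]) \setminus A$ meets at most $n+1$ components of $X \setminus A$.

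On the other hand, each $x_i$ lies in $X \setminus A$ (since $x_i \in C_i$) and is on the arc, so its preimage lies in some $I_j$. Thus $\alpha([0,1]) \setminus A$ meets every $C_i$, giving at least $n+2$ components — a contradiction. Hence no arc in $X$ contains all of $x_1, \ldots, x_{n+2}$, so $X$ is not $(n+2)$-ac.

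There is essentially no obstacle here; the only mildly subtle point is ensuring $|\alpha^{-1}(A)| \leq n$, which is immediate from $\alpha$ being a homeomorphism onto its image, and observing that since none of the $x_i$ lies in $A$, each preimage $\alpha^{-1}(x_i)$ sits inside one of the intervals $I_j$ rather than in the finite removed set.
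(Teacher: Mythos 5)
Your proof is correct and is essentially the paper's argument in dual form: the paper orders the $n+2$ points along the arc and notes that each of the $n+1$ open subarcs between consecutive points must meet $A$, while you remove $\alpha^{-1}(A)$ from $[0,1]$ and count that the at most $n+1$ remaining intervals cannot cover points from $n+2$ distinct components. Your phrasing has the minor advantage of not needing to relabel the points or assume the arc's endpoints are among them, but the underlying pigeonhole on connected pieces of the arc avoiding $A$ is the same.
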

\begin{proof}
Pick $n+2$ points $x_0, x_1, \ldots, x_{n+1}$ each belonging to a distinct component of $X\backslash A$. Suppose there is an arc  containing $x_0, x_1, \ldots, x_{n+1}$. Relabeling $x_0, x_1, \ldots, x_{n+1}$ if necessary, we can fix an embedding $\alpha : I \to X$ and  $0 = t_0 < t_1 < \cdots < t_{n+1} = 1$ such that $x_i = \alpha(t_i)$ for each $i$. Then $\alpha((t_i, t_{i+1})) \cap A \neq \emptyset$ for each $i = 0, 1, \ldots, n$, which is a contradiction since $|A| \leq n$ and $\alpha$ is injective.  
\end{proof}

\begin{lemma}
\label{lem_cutting_set}
Let $G$ be a graph. If for some $n \in \N$, the condition
\[(\star_n) \quad \quad \text{no $n$ points of $|G|$ cut $|G|$ into at least $n+2$ components} \]
holds, then $(\star_n)$ implies $(\star_m)$ for all $1 \leq m \leq n$.
\end{lemma}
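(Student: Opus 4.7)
The plan is to reduce to a single step: I show $(\star_n) \Rightarrow (\star_{n-1})$ for every $n \geq 2$, and then the full statement follows by iterating this downwards. For that one-step implication I argue the contrapositive. Assume $(\star_{n-1})$ fails, so there exists $A \subseteq |G|$ with $|A|=n-1 \geq 1$ such that $|G| \setminus A$ has $k \geq n+1$ components $C_1, \ldots, C_k$. My goal is to exhibit a single point $y \in |G| \setminus A$ so that $|G| \setminus (A \cup \{y\})$ has at least $k+1 \geq n+2$ components; this contradicts $(\star_n)$ because $|A \cup \{y\}|=n$.

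To find $y$, I first locate a useful $a \in A$. Because $|G|$ is a connected, locally connected $1$-complex and $k \geq 2$, some component, say $C_1$, is not clopen in $|G|$, hence its topological boundary $\partial C_1$ is non-empty. Moreover $\partial C_1 \subseteq A$, since any $p \in \partial C_1$ lying in some other component $C_j$ would have an open-in-$|G|$ neighbourhood contained in $C_j$, disjoint from $C_1$. Fix $a \in \partial C_1 \cap A$. Since $|G|$ is a $1$-complex, $a$ admits arbitrarily small neighbourhoods decomposing into finitely many half-open arcs emanating from $a$, at least one of which lies in $C_1$. Pick $y$ on this half-arc, close enough to $a$ that the open sub-arc $I$ between $a$ and $y$ avoids the finite set $A$; in particular $y$ is an interior point of an edge $e$ of $G$.

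The key verification is that this $y$ really raises the component count. The sub-arc $I$ is open in $|G|$, and its topological closure adjoins only the two points $a, y \in A \cup \{y\}$, so $I$ is clopen in $|G| \setminus (A \cup \{y\})$; being connected it forms exactly one component. On the other hand, because $y$ is an interior point of $e$, the side of $y$ opposite to $a$ on $e$ contains points arbitrarily close to $y$ outside $A \cup \{y\}$; these lie in $C_1$ but not in $I \cup \{y\}$, so $C_1 \setminus (I \cup \{y\})$ is non-empty and contributes at least one further component of $|G| \setminus (A \cup \{y\})$. Together with the untouched $C_2, \ldots, C_k$ we obtain at least $1 + 1 + (k-1) = k+1$ components, the required contradiction.

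The main obstacle is the component bookkeeping in the final step: one must confirm both that $I$ is genuinely separated from the rest of $C_1 \setminus \{y\}$ (which is why $y$ is chosen close enough to $a$ that $I$ misses every other point of $A$) and that $C_1$ really extends beyond $I \cup \{y\}$ (which is why $y$ is chosen as an interior point of an edge rather than, say, a vertex of small degree). With these two choices, the decomposition of $C_1$ into the cut-off arc $I$ and the residual set $C_1 \setminus (I \cup \{y\})$ is essentially forced, and no further arguments are needed.
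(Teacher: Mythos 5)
Your proof is correct and follows essentially the same route as the paper's: both argue the contrapositive by locating a half-open arc of $|G|\setminus A$ dangling from a point of the witness set $A$ and inserting additional cut points along it, each of which detaches a new clopen arc-component. The only organizational difference is that you add one point at a time and induct downwards, whereas the paper adds all $n-m$ points at once along a single such half-open edge; your explicit appeal to connectedness of $|G|$ is an assumption the paper's proof also makes (implicitly, when it asserts the existence of the half-open edge) and is harmless in context.
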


\begin{proof}
Let $A \subseteq |G|$ be finite of size $m \geq 1$ witnessing the failure of $(\star_m)$. Then  $|G| \setminus A$ contains at least one half-open edge, i.e.\ an open set $U$ such that $U \cong (0,1)$ and $\closure{U} \cong [0,1)$. By picking $n-m$ many points from $U$ and adding them to the set $A$, we obtain a set $A'$ witnessing the failure of $(\star_n)$.
\end{proof}

Our last lemma in this section says that when verifying whether a graph $G$ is $n$-ac, it suffices to consider points on the interior of edges of $|G|$.

\begin{lemma}
\label{lem_wlogpointsonedges}
For $n \in \N$, a graph $G$ is $n$-ac if and only if $(|G| \setminus V,|G|)$ is $n$-ac.
\end{lemma}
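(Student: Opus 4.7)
The forward implication is immediate: any $n$-tuple from $|G|\setminus V$ is an $n$-tuple from $|G|$, so the arc witnessing $n$-ac of $|G|$ also works for the restricted tuple.

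For the converse, my plan is a perturbation argument followed by local surgery near arc endpoints. Given $x_1,\dots,x_n\in|G|$, let $V'=\{x_1,\dots,x_n\}\cap V$. For each $v\in V'$, choose an incident edge $e_v=vw_v$ (assuming each such $v$ has positive degree; isolated vertices would need a trivial separate treatment) and a generic interior point $y_v\in e_v$, so that upon setting $y_i:=x_i$ for $x_i\notin V$ and $y_i:=y_{x_i}$ otherwise, the points $y_1,\dots,y_n$ form $n$ distinct elements of $|G|\setminus V$. By hypothesis there is an arc $\alpha\colon[0,1]\to|G|$ through all the $y_i$, and the task is to modify $\alpha$ into an arc through the original $x_i$.

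The central structural step is to analyse, for each $v\in V'$ with $v\notin\alpha$ (call this case B), the preimage $\alpha^{-1}(\closure{e_v})\subseteq[0,1]$. I plan to show that it consists of a single closed interval $[s_v,t_v]$ containing $\alpha^{-1}(y_v)$, with at least one of $s_v=0$ or $t_v=1$ holding. The point is that $\alpha(s_v),\alpha(t_v)$ must lie in the topological frontier of $\closure{e_v}$ in $|G|$ (a subset of $\{v,w_v\}$) or else coincide with an arc endpoint interior to $e_v$; since $v\notin\alpha$, the only available vertex is $w_v$, and injectivity of $\alpha$ forbids $\alpha(s_v)=\alpha(t_v)=w_v$. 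The same injectivity rules out any further components of $\alpha^{-1}(\closure{e_v})$, because re-entering $\closure{e_v}$ would demand revisiting $w_v$.

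With this structure in hand, the surgery is routine: if, say, $s_v=0$ and $t_v<1$, I replace $\alpha([0,t_v])$ by the full sub-arc $[v,w_v]\subseteq\closure{e_v}$, yielding a new arc that begins at $v$, passes through $y_v$ to $w_v$, and continues as the old $\alpha$ on $[t_v,1]$; injectivity is preserved precisely because $\alpha([t_v,1])$ is disjoint from $\closure{e_v}$. All $y_i$ remain on the new arc (its image only grows on $\closure{e_v}$), as does every case-A vertex (one already lying on $\alpha$). The principal obstacle I expect is verifying compatibility when several vertices of $V'$ lie in case B simultaneously: each case-B surgery consumes an arc endpoint lying in the associated edge $e_v$, so two surgeries competing for the same arc endpoint would force $e_v=e_{v'}$ for distinct $v,v'\in V'$, hence $v'=w_v\in\alpha$, contradicting $v'\notin\alpha$. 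Thus at most two case-B surgeries occur, on disjoint portions of $\alpha$, and they commute; iterating them (together with the no-op for case-A vertices) then produces the desired arc through $x_1,\dots,x_n$.
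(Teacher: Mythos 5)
Your proposal is correct and follows essentially the same route as the paper: replace each chosen vertex by an interior point of an incident edge, invoke the hypothesis to get an arc through the perturbed points, and then observe that any such vertex missed by the arc must lie on an edge containing an arc endpoint, so the arc can be extended within that closed edge to pick it up. Your write-up just makes explicit the preimage analysis and the at-most-two-endpoint bookkeeping that the paper's shorter argument leaves implicit.
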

\begin{proof}
Only the backwards implication requires proof. Assume that $(|G| \setminus V,|G|)$ is $n$-ac and let $x_0, x_1, \ldots, x_n \in G$ be arbitrary. Pick $y_0, y_1, \ldots, y_n \in |G| \setminus V$ as follows: if $x_i$ lies on the interior of an edge, then $y_i = x_i$; otherwise, if $x_i \in V$, let $y_i$ be a point on the interior of some edge incident with $x_i$. By assumption, there is an arc $\alpha$ containing $y_0, y_1, \ldots, y_n$ which we may assume to have endpoints $y_0$ and $y_n$. Therefore, $x_1, x_2, \ldots, x_{n-1} \in \alpha$. We will show that $\alpha$ can be extended to include $x_0$ and the same argument will work for $x_n$ as well. If $y_0 = x_0$ we are done. If $y_0 \neq x_0$ then $x_0$ is an endpoint of the edge containing $y_0$, say $e$. The arc $\alpha$ contains one of these endpoint and if $x_0 \in \alpha$ we are done. Otherwise, $\alpha \cup e \cup \Set{x_0}$ is also an arc and contains $x_0$.   
\end{proof}

\section{Characterizing \texorpdfstring{$n$}{n}-ac Graphs} 

\subsection{Characterizing \texorpdfstring{$2$}{2}-ac, \texorpdfstring{$2$}{2}-cc, \texorpdfstring{$3$}{3}-ac and \texorpdfstring{$3$}{3}-cc graphs} 
A graph $G$ is \emph{$n$-strongly arc connected}, abbreviated $n$-sac (see \cite{sacpaper}) if for any list of no more than $n$ elements of $|G|$ there is an arc in $G$ containing the points in the specified order. We note that no graph is $4$-sac (pick four points $x_1, x_3, x_2, x_4$ in that order along any edge). It is evident that the following are  equivalent for a graph $G$: (i) $G$ is $2$-ac, (ii) $G$ is $2$-sac, (iii) $|G|$ is connected, and (iv) $G$ is connected (combinatorially). 

It is also clear that a graph is $3$-cc if and only if it is a cycle. Indeed a  graph $G$ is not $3$-cc if (i) it contains a vertex of degree one (that vertex is not in any circle), or (ii) a vertex of degree at least $3$ (consider three points from the interior of three edges exiting the vertex), or (iii) is a chain. 

We characterize $3$-sac and $2$-cc graphs.  
The equivalence of (1) through (4) below for \emph{finite} graphs  was established in \cite[Prop.~6]{sacpaper}. 
\begin{theorem}
\label{thm_27}
For a (possibly infinite) graph $G$, the following are equivalent:
\begin{enumerate}
\item $G$ is $3$-sac,
\item $G$ is cyclically connected,
\item any three points of $|G|$ lie on a circle or a $\theta$-curve, 
\item $G \neq K_2$ is $2$-connected, and
\item $G$ is $2$-cc.
\end{enumerate} 
\end{theorem}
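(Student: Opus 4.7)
The proof will establish the cycle $(1) \Rightarrow (4) \Rightarrow (3) \Rightarrow (1)$ together with $(2) \Leftrightarrow (4) \Leftrightarrow (5)$. The bulk of the work is in $(4) \Rightarrow (3)$; the remaining implications are either routine or quick applications of Menger's theorem. For the easy pieces: $K_2$ fails each of $(1),(2),(5)$ (three distinct points on its edge in an order inconsistent with their positions along it violate $3$-sac, and $|K_2|$ contains no cycle or circle at all), which rules it out everywhere. A combinatorial cut-vertex $v$ with components $C_1, C_2$ of $G-v$ also gives immediate failures: for $(1)$, pick $x, y \in |C_1|$ and $z \in |C_2|$ and note an arc realizing the order $(x, z, y)$ would have to cross $v$ twice, contradicting injectivity; for $(2)$ and $(5)$, no cycle or circle can contain two points living in different components of $|G|-v$. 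Conversely, if $G \neq K_2$ is $2$-connected, then $|V(G)| \geq 3$ and Menger's theorem (Theorem~\ref{menger}) yields two internally disjoint paths between any two vertices whose union is a cycle, proving $(4) \Rightarrow (2)$; subdividing first to promote arbitrary points of $|G|$ to vertices (without altering $|G|$ or $2$-connectedness) gives $(4) \Rightarrow (5)$.

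The substantive implication $(4) \Rightarrow (3)$ reduces, via the same subdivision trick applied at the three given points $x,y,z$, to the auxiliary lemma: \emph{in a $2$-connected graph with at least three vertices, any three vertices $u, v, w$ lie on a common cycle or $\theta$-subgraph}. To prove this, take a cycle $C$ through $u, v$; if $w \in V(C)$ we are done, so assume $w \notin V(C)$. The Fan Lemma — derived from Theorem~\ref{menger} by forming $G'$ from $G$ through the addition of a new vertex $y$ adjacent to every vertex of $V(C)$ (this preserves $2$-connectedness, so $y$ and $w$ are linked by two internally disjoint paths in $G'$, and truncating at the first visit to $V(C)$ gives the desired paths in $G$) — produces two internally disjoint paths $P_1, P_2$ from $w$ to distinct vertices $p_1 \neq p_2 \in V(C)$, each meeting $V(C)$ only at its endpoint. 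Then $C \cup P_1 \cup P_2$ is the required $\theta$-subgraph with branch vertices $p_1, p_2$ containing $u$, $v$, and $w$.

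Finally, for $(3) \Rightarrow (1)$: given an ordered triple $(x, y, z)$ on a common circle $C$, the two open sub-arcs of $C$ with endpoints $x, z$ partition $C \setminus \{x, z\}$ and exactly one contains $y$; that sub-arc together with $\{x, z\}$ is an arc from $x$ to $z$ through $y$. If instead the triple lies on a $\theta$-curve with branch vertices $a, b$ and three $a$-$b$ arcs $\alpha, \beta, \gamma$, a short case analysis on how $x, y, z$ distribute among the arcs supplies the arc — for instance, in the generic case of one point per arc one routes $x \to a$ along $\alpha$, $a \to b$ through $y$ along $\beta$, and $b \to z$ along $\gamma$; when two or all three lie on a single arc, the same strategy of using one branch vertex as a detour through the arc containing $y$ handles each configuration. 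The main obstacle is the auxiliary lemma, specifically ensuring that the Fan Lemma step remains valid for possibly infinite $G$; this is covered by the infinite version of Menger's theorem recorded in Theorem~\ref{menger}, noting also that the target set $V(C)$ is finite.
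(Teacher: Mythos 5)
Your proof is correct, but it takes a genuinely different route from the paper's. The paper handles $(2) \Rightarrow (3)$ by a reduction to the finite case: given three points, it takes a cycle through each pair, observes that the union $H$ of these three cycles is a finite cyclically connected subgraph, and then invokes the finite-graph result of \cite{sacpaper} to place the three points on a circle or $\theta$-curve inside $H$; the implications $(3) \Rightarrow (1)$ and $(1) \Rightarrow (4)$ are likewise delegated to that earlier paper. You instead give a self-contained argument: the Fan Lemma, correctly derived from the infinite Menger's Theorem~\ref{menger} by adding a vertex dominating the finite set $V(C)$, produces the $\theta$-subgraph directly, and you verify by hand that circles and $\theta$-curves realize every ordering of three points. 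Your approach buys independence from the external reference at the cost of carrying out the fan construction and the order-realization case analysis explicitly; the paper's approach buys brevity by reusing the finite machinery, with the union-of-three-cycles trick doing the work of transferring to the infinite setting. Both are sound. Two small points to tidy: in your $(1) \Rightarrow (4)$ argument, if a component $C_1$ of $G-v$ is a single vertex you cannot pick two distinct points of $|C_1|$, but you can take the second point on the interior of an edge joining that vertex to $v$; and since the paper's notion of $2$-connected includes connectedness of $G$ itself, $(1) \Rightarrow (4)$ should also record that a disconnected graph fails to be $3$-sac (trivially, as it is not even $2$-ac), not merely that no cut-vertex exists.
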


\begin{proof}
The equivalence of (2) $\Leftrightarrow$ (4) follows from Menger's Theorem~\ref{menger}.

For (2) $\Rightarrow$ (3), pick three points $x_0,x_1,x_2 \in G$. Now for every each 2-element subset $A_i$ of $\{x_0,x_1,x_2\}$, use the fact that $G$ is cyclically connected to find a (finite) cycle $C_i \subseteq G$ containing the two points of $A_i$. Then consider the finite connected subgraph $H = \bigcup_i C_i$ of $G$. By construction, any two points of $H$ lie on a cycle, so $H$ is cyclically connected. By the finite case, the three points $x_0,x_1,x_2$ of $H$ lie on a circle or a $\theta$-curve in $H$, and hence in also $G$.

The implication (3) $\Rightarrow$ (1) follows from the finite case (see \cite[Prop.~6]{sacpaper}), and to see (1) $\Rightarrow$ (4), note that if a topological space has a cut-point, then it fails to be $3$-sac, \cite[Lemma~1]{sacpaper}.

Finally, evidently $2$-cc graphs are cyclically connected, while (3) $\Rightarrow$ (5) since the circle and $\theta$-curve are clearly $2$-cc. 
\end{proof}

Thus cyclically connected graphs are (strongly) $3$-ac, and this  extends naturally to a characterization of $3$-ac graphs.

\begin{theorem}
\label{thm_3acChain}
A (potentially infinite) graph $G$ is $3$-ac if and only if it is a chain graph of $2$-connected links, or, equivalently, if and only if its block graph is connected and contains no vertex of degree at least $3$.
\end{theorem}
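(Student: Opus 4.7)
My plan is to prove both directions of the main equivalence; the equivalence of the two combinatorial characterizations in the statement follows directly from the discussion of chain graphs in Section~\ref{sec_1}, since the links of a chain graph of $2$-connected links are precisely the blocks of $G$ and the linking vertices are precisely its cut-vertices, so $G$ is such a chain graph if and only if $B(G)$ is a finite path, a ray, or a double ray. As $B(G)$ is always a tree (Lemma~\ref{lem_blockgraph}), this is equivalent to $B(G)$ being connected and containing no vertex of degree at least~$3$.

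For the direction $(\Rightarrow)$, I argue the contrapositive. Suppose $B(G)$ contains a vertex of degree at least~$3$. If this vertex is a cut-vertex $c$ of $G$ contained in at least three blocks, then the block-cutvertex tree structure guarantees that $|G| \setminus \{c\}$ has at least three components (one per block at $c$), and $G$ fails to be $3$-ac by Lemma~\ref{cutting_set} with $n=1$. Otherwise the vertex is a block $B$ of $G$ containing at least three cut-vertices $c_1, c_2, c_3$. The tree structure of $B(G)$ then yields pairwise disjoint subgraphs $U_1, U_2, U_3$ of $G$ --- namely the unions of all blocks in the three subtrees of $B(G) - B$ rooted at $c_1, c_2, c_3$ --- with $U_i \cap B = \{c_i\}$. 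I pick $x_i$ on the interior of some edge of $U_i$ incident with $c_i$, so that any path in $G$ from $x_i$ to a point outside $U_i$ must traverse $c_i$. By the standard reduction noted before Lemma~\ref{cutting_set}, the endpoints of any candidate arc through $x_1, x_2, x_3$ may be assumed to lie among these three points, so at least one $x_i$ --- say $x_1$ --- is interior to the arc. But then the arc must exit $U_1$ on both sides of $x_1$, each time through $c_1$, contradicting its injectivity.

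For the direction $(\Leftarrow)$, given three points $x_0, x_1, x_2$ in a chain graph $G$ with links $\ldots, L_{-1}, L_0, L_1, \ldots$, I assign each $x_i$ to a link $L_{k_i}$ containing it (choosing either link when $x_i$ is a linking vertex) and relabel so that $k_0 \leq k_1 \leq k_2$. I build the desired arc by concatenating arc segments, one per link in the stretch from $L_{k_0}$ to $L_{k_2}$: in each such link $L_k$ the segment has as endpoints the two linking vertices of $L_k$ on this stretch (with $x_0$ or $x_2$ replacing one of these for the two terminal links) and must pass through the $x_i$'s assigned to $L_k$. Such a segment exists by $2$-ac of $L_k$ when no $x_i$ needs to be threaded through it, and by $3$-sac otherwise (Theorem~\ref{thm_27}) --- trivially when $L_k = K_2$, in which case the segment is a sub-interval of the single edge of $L_k$, and from $2$-connectedness of $L_k$ otherwise. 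The resulting concatenation is a genuine arc because consecutive links share only the relevant linking vertex and non-consecutive links are disjoint.

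The main obstacle is the bookkeeping in $(\Leftarrow)$: degenerate subcases where an $x_i$ coincides with a linking vertex, or where a link containing multiple $x_i$'s is isomorphic to $K_2$, require some care to verify that the arc segments can be chosen so that their concatenation is genuinely an arc rather than merely a connected union. The subcase where all three $x_i$ lie in a single link reduces immediately to the $3$-sac conclusion of Theorem~\ref{thm_27}.
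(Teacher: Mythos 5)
Your proposal is correct and follows essentially the same route as the paper: the block-cutvertex tree, Lemma~\ref{cutting_set} for a cut-vertex lying in three or more blocks, three points chosen near three cut-vertices of a single block for the other failure mode, and a link-by-link concatenation of arcs supplied by Theorem~\ref{thm_27} for sufficiency. The only difference is that you spell out the ``easily shows'' step of the necessity argument (the double passage through $c_1$) and the degenerate bookkeeping of the concatenation in more detail than the paper does.
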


\begin{proof}
To see that the conditions are necessary, consider the block-cutvertex decomposition of $G$ and its associated block graph $B(G)$, which is a (potentially infinite) tree by Lemma~\ref{lem_blockgraph}. To prove that $G$ is a chain graph of $2$-connected links, it suffices to show every vertex of $B(G)$ has degree at most $2$. It follows from Lemma~\ref{cutting_set} that no cut-vertex of $G$ can have degree strictly bigger than $2$ in $T$. And if there is a block $B$ of $G$ with contains at least three cut vertices $c_0,c_1,c_2$, then picking three points $x_i$ each on the interior of edges $e_i \in G \setminus E(B)$ incident with $c_i$ easily shows that $G$ cannot be $3$-ac. 

For the converse direction, suppose $G$ is a chain graph of $2$-connected graphs. Pick $x_0,x_1,x_2$ in $G$. Then there is a minimal finite `convex' part of that chain, say $L_0, \ldots, L_n$ with $L_i \cap L_{i+1} = v_i$,  covering $x_0,x_1,x_2$.

If all $x_0,x_1,x_2$ lie in the same link $L_0$, we are done by Theorem~\ref{thm_27}, as $L_0$ is $2$-connected. If $x_0,x_1$ lie in same link, say $L_0$, and $x_2$ lies in $L_n$, then we may find 
\begin{itemize}
\item an arc $\alpha_0$ in $L_0$ that picks up $\Set{x_0,x_1,v_0}$ ending at $v_0$ (clear if $L_0 = K_2$, and by Theorem~\ref{thm_27} otherwise),
\item arcs $\alpha_i$ in $L_i$ with endpoints $v_{i-1}$ and $v_i$ for $ 0 < i < n$, and 
\item an arc $\alpha_n$ in $L_n$ with endpoints $v_{n-1}$ and $x_3$.
\end{itemize} 
It is then clear that the concatenation of the $\alpha_i$ is an arc though our three points $x_0,x_1,x_2$. Finally, in the case where $x_0 \in L_0$, $x_1 \in L_i$ for $0<i<n$ and $x_2 \in L_n$, the same approach extends straightforwardly.
\end{proof}

\subsection{Characterizing \texorpdfstring{$4$}{4}-ac graphs}

Let us say that a graph $G$ is a \emph{basic $4$-ac graph} if $G$ is (a subdivision of) a circle, a $\theta$-curve, a cycle graph of two circles and an arc (`happy-face curve'), or if it is (a subdivision of) a cycle graph of alternating two circles and two arcs (`baguette curve'). See the following sketch for the latter two basic $4$-ac graphs. 

\vspace{6pt}
\begin{minipage}{.49\textwidth}
\begin{center}
\begin{tikzpicture}[scale=.8, use Hobby shortcut]
\begin{scope}[xshift=-1.5 cm]
\draw (0,0) circle (1.5cm);

\node[circle, draw, inner sep=0.6pt, minimum width=4pt, fill=black]  (b1) at (-90:1.5cm) {};
\end{scope}

\begin{scope}[xshift=1.5 cm]
\node[circle, draw, inner sep=0.6pt, minimum width=4pt, fill=black]  (a) at (-1.5,0) {};

\draw (0,0) circle (1.5cm);

\node[circle, draw, inner sep=0.6pt, minimum width=4pt, fill=black]  (b2) at (-90:1.5cm) {};
\end{scope}

\draw (b1) to[out=-90,in=-90] (b2);


\end{tikzpicture}
\end{center}
\end{minipage}
\begin{minipage}{.49\textwidth}
\begin{center}
\begin{tikzpicture}[scale=.8, use Hobby shortcut]
\begin{scope}[xshift=-1.5 cm]
\draw (0,0) circle (1.5cm);
\node[circle, draw, inner sep=0.6pt, minimum width=4pt, fill=black]  (a1) at (90:1.5cm) {};
\node[circle, draw, inner sep=0.6pt, minimum width=4pt, fill=black]  (b1) at (-90:1.5cm) {};
\end{scope}

\begin{scope}[xshift=2.5 cm]

\draw (0,0) circle (1.5cm);
\node[circle, draw, inner sep=0.6pt, minimum width=4pt, fill=black]  (a2) at (90:1.5cm) {};
\node[circle, draw, inner sep=0.6pt, minimum width=4pt, fill=black]  (b2) at (-90:1.5cm) {};
\end{scope}

\draw (a1) to[out=20,in=160] (a2);
\draw (b1) to[out=-20,in=-160] (b2);

\end{tikzpicture}
\end{center}
\end{minipage}
\vspace{6pt}

As any four edges of these graphs either lie on a common $\theta$-curve, a figure-8-curve or a dumbbell, these graphs are indeed $4$-ac. The purpose of our next theorem is to prove a `converse' of this observation for cyclically connected 4-ac graphs. 

\begin{theorem}\label{cyclic_4ac}
For a cyclically connected graph  $G$, the following are equivalent: 
\begin{enumerate}
\item $G$ is $4$-ac, 
\item no two vertices cut $G$ into $4$ or more components, and  
\item any four edges of $G$ are contained in a basic $4$-ac subgraph of $G$. 
\end{enumerate}
\end{theorem}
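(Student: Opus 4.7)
Proof plan. The implications $(3)\Rightarrow(1)$ and $(1)\Rightarrow(2)$ are routine. For $(3)\Rightarrow(1)$, given four points of $|G|$, Lemma~\ref{lem_wlogpointsonedges} lets us assume each lies in the interior of some edge; adjoining arbitrary further edges if necessary (possible unless $G$ is a triangle, whose realisation is a circle and already $4$-ac), one obtains four edges that by $(3)$ lie in a basic $4$-ac subgraph $H$. Since $H$ is itself $4$-ac (as noted in the text immediately preceding the theorem), the four original points lie on a common arc of $H\subseteq|G|$. The implication $(1)\Rightarrow(2)$ is immediate from Lemma~\ref{cutting_set} applied with $n=2$.

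The substance of the theorem is $(2)\Rightarrow(3)$. Given four edges, subdivide each with a midpoint vertex $v_i$ to obtain a graph $G'$ that inherits both cyclic connectedness and the no-two-cut-into-four-components hypothesis. Since each $v_i$ has degree $2$ in $G'$, every basic $4$-ac subgraph of $G'$ containing $\{v_1,\dots,v_4\}$ automatically contains each full original edge (no $v_i$ can sit at a branch vertex), so it suffices to produce such a subgraph. I would proceed by case analysis on the largest number of $v_i$ lying on a common cycle of $G'$. Four on a cycle gives the desired circle at once. If three of them, say $v_1,v_2,v_3$, lie on a cycle $C$ while $v_4\notin V(C)$, apply Menger's Theorem~\ref{menger} to $\{v_4\}$ and $V(C)$ in the $2$-connected $G'$ to obtain two internally disjoint $v_4$--$V(C)$ paths whose union with $C$ is a $\theta$-curve through $v_1,\dots,v_4$. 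If no three of the $v_i$ lie on a common cycle, pair them as $\{v_1,v_2\},\{v_3,v_4\}$ and choose cycles $C_1\ni v_1,v_2$ and $C_2\ni v_3,v_4$ minimising $|V(C_1)\cap V(C_2)|$, then handle three subcases: if $C_1,C_2$ are vertex-disjoint, two disjoint $V(C_1)$--$V(C_2)$ paths from Menger complete $C_1\cup C_2$ to a baguette; if $V(C_1)\cap V(C_2)=\{v_0\}$, a shortest path in $G'-v_0$ between $V(C_1)\setminus\{v_0\}$ and $V(C_2)\setminus\{v_0\}$ (which exists because $v_0$ is not a cut vertex of the $2$-connected $G'$) completes the figure-eight $C_1\cup C_2$ to a happy-face; and if $|V(C_1)\cap V(C_2)|\ge2$, an arc-swap of $C_1$ and $C_2$ around two common vertices should yield a cycle through three of the $v_i$, contradicting the case assumption.

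The principal obstacle is the final arc-swap subcase. The naive swap succeeds whenever the distribution of $v_i$'s places two on a single arc of $C_1$ (or of $C_2$) between the swap-vertices $x,y$, since then one new cycle hits three $v_i$ at once. The genuinely hard configuration is when $v_1,v_2$ separate $x,y$ along $C_1$ while $v_3,v_4$ separate them along $C_2$, so that $C_1\cup C_2$ is a $4$-book with one $v_i$ on each arc. Here the alternating-walks machinery of Theorem~\ref{thm_existencealternatingwalk} is invoked to find an auxiliary path in $G'$ outside $C_1\cup C_2$ and to reroute one of the cycles through it, producing a new cycle carrying three $v_i$. That such an auxiliary path exists is precisely where the no-two-cut-into-four-components hypothesis enters: were it absent, $\{x,y\}$ could already cut $G'$ into four pieces, one per arc of the $4$-book, and no reroute (nor any basic $4$-ac subgraph) could contain all four $v_i$. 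Thus the hypothesis is used exactly to close this last case and secure a basic $4$-ac subgraph in all situations.
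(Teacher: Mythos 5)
Your implications $(3)\Rightarrow(1)$ and $(1)\Rightarrow(2)$ are fine and agree with the paper, and your overall route for $(2)\Rightarrow(3)$ is genuinely different: the paper does not case-split on how many of the four points lie on a common cycle, but instead applies Theorem~\ref{thm_27} to put three of the points on a circle or $\theta$-curve, attaches the fourth by two internally disjoint arcs via Menger, and runs a four-case analysis on the attachment points; hypothesis (2) is used only in the single case where the union is four parallel arcs between two vertices $a,b$, where one chord $\delta$ is added and one checks directly that every choice of four points of the resulting graph lies on a $\theta$-curve or happy-face curve. No alternating-walk machinery is needed at this stage of the paper. Your first two cases (four, respectively three, of the $v_i$ on a common cycle) and the first two subcases of the remaining case (disjoint cycles; cycles meeting in exactly one vertex) are correct and yield the circle, $\theta$-curve, baguette and happy-face respectively.

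The gap is the final subcase $|V(C_1)\cap V(C_2)|\geq 2$. Two concrete problems. First, your reduction to the ``$4$-book'' is unjustified: when the intersection has three or more vertices, the arcs of $C_1$ and of $C_2$ between two chosen swap-vertices $x,y$ need not be internally disjoint, so the ``swapped'' edge sets need not be cycles at all, and neither the minimality of $|V(C_1)\cap V(C_2)|$ nor the case hypothesis is shown to rule this out. Second, even in the genuine $4$-book configuration your closing move fails as stated: hypothesis (2) gives you \emph{one} chord $\delta$ between two pages, but you have no control over where it attaches, and for some attachments no reroute produces a cycle through three of the $v_i$. For instance, take the book on $\{a,b\}$ with pages $e_0,\dots,e_3$ and $v_i\in e_i$, and a chord from $p\in(a,v_0)$ to $q\in(a,v_1)$: in the resulting graph all four marked subarcs $[p,b]\ni v_0$, $[q,b]\ni v_1$, $e_2$, $e_3$ are incident with $b$, so every cycle contains at most two of the $v_i$. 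The four points do lie on a happy-face curve of that graph (this is exactly what the paper exhibits in its case (4)), but that must be shown directly; it cannot be reached by ``producing a new cycle carrying three $v_i$'' and falling back to an earlier case. (In this particular example a minimal-intersection choice of $C_1,C_2$ would in fact meet in the single vertex $b$ and land you in your figure-eight subcase, which only underlines that the interaction between minimality and your case split is delicate and nowhere verified.) To close the argument you would need either an explicit analysis of the attachment positions of $\delta$ in the $4$-book, as the paper gives, or a separate argument disposing of intersections of size at least $2$ under minimality.
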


\begin{proof}
The implication $(1) \Rightarrow (2)$ is Lemma~\ref{cutting_set}, and $(3) \Rightarrow (1)$ is clear.

For $(2) \Rightarrow (3)$, let $G$ be a $4$-ac cyclically connected graph such that no two point set cuts it into at least $4$ components. Let $x_0, \ldots, x_3 \in G$. To show that $G$ is $4$-ac, we may assume, by Lemma~\ref{lem_wlogpointsonedges}, that all $x_i$ are interior points of edges. 

By Theorem~\ref{thm_27}, the three points $x_0,x_1,x_2$ lie on a common circle or a common $\theta$-curve $X$. In the first case, Menger's Theorem~\ref{menger}---applied with the two endvertices of the edge containing $x_3$ against $V(X)$---shows that there are two disjoint $x_3-X$ arcs, and so there is a $\theta$-curve contain $\Set{x_0,\ldots,x_3}$ and we are done. 

Otherwise, let us write $a$ and $b$ for the two degree-3-vertices of the $\theta$-curve $X$, and $e_0,e_1,e_2$ for its three edges. Further, as $x_0,x_1,x_2$ do not lie on a common cycle, we may label the edges of $X$ such that $x_i \in e_i$. Since $G$ is cyclically connected, it follows from Menger's Theorem~\ref{menger} as before that there is an arc $\alpha$ such that $x_3 \in \alpha$ and $X\cap \alpha = \{ \alpha(0), \alpha(1) \}$. Up to symmetry, the following cases can occur:
\begin{center}
\begin{tabular}{ll}

(1) \ $\alpha(0),\alpha(1) \in e_0$, & 
(2) \ $\alpha(0) \in e_0$, $\alpha(1) \in e_1$, \\
(3) \ $\alpha(0) = a$, $\alpha(1) \in e_0$, or \ \ & 
(4) \ $\alpha(0)= a$, $\alpha(1) = b$.
\end{tabular} 
\end{center}

In the first case, $Y=X\cup \alpha$ is homeomorphic to a baguette curve. In the second case, $Y$ is homeomorphic to a $K_4$, where removing any edge not containing a point $x_i$ reduces it to a $\theta$-curve. In the third case, $Y$ is a happy-face-curve. Finally, in the last case, $Y$ consists of the vertices $a$ and $b$ with four parallel edges $e_0,\ldots,e_3$ between them. Since by assumption, $|G| \setminus \Set{a,b}$ consists of at most three components, there is an arc $\delta$ in $G \setminus \Set{a,b}$ internally disjoint from $Y$ with say $\delta(0) \in e_0$ and $\delta(1) \in e_1$. One checks that any four points in $Z = Y \cup \delta$ lie on either a $\theta$-curve or on a happy-face-curve, which completes the proof. 
\end{proof}

Next, we extend our characterization of $4$-ac graphs to graphs which are no longer necessarily cyclically connected. For this, the following lemma gives us additional control over arcs in our four basic $4$-ac graphs.

\begin{lemma}\label{basic_4-ac}
Let $G$ be one of our basic $4$-ac graphs. If $w$ is a point in the interior of an edge of $G$, then for any three further points in $G$, there exists an arc in $G$ that contains those three points and has $w$ as an endpoint. 
\end{lemma}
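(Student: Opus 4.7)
The plan is to establish the lemma by a direct case analysis on which of the four basic $4$-ac graphs $G$ is. As preprocessing, by (the argument of) Lemma~\ref{lem_wlogpointsonedges} applied to the three further points, we may assume each of $x_1, x_2, x_3$ lies in the interior of an edge; after subdividing $G$ at $w$ and the $x_i$'s, they all become degree-$2$ vertices and the lemma reduces to finding a finite path in the subdivided graph that starts at $w$, visits $x_1, x_2, x_3$, and possibly terminates partway along an unused edge.

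For the circle, $|G| \setminus \Set{w}$ is homeomorphic to an open interval, and a suitable closed sub-arc with $w$ as an endpoint contains the three $x_i$'s. For the $\theta$-curve, let $a, b$ be the two degree-$3$ vertices and $e_0, e_1, e_2$ the three edges, with $w \in \interior{e_0}$, and split $e_0$ at $w$ into halves $e_0^a, e_0^b$. For any distribution of $x_1, x_2, x_3$ among the four edges $e_0^a, e_0^b, e_1, e_2$ one exhibits an arc from $w$ of the schematic form ``fully traverse one of $e_0^a, e_0^b$; then fully traverse one of $e_1, e_2$; then enter the remaining edge partway,'' choosing the initial direction and the traversal order so that all three $x_i$'s are covered.

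For the happy-face and baguette curves, which are small graphs with only three or four branch vertices, we decompose $G$ into its circle- and arc-links, identify the link containing $w$, and build the arc by traversing the cyclic chain from $w$ in an appropriate direction. The key observation is that from either linking vertex of a circle-link one can reach every interior point of that circle by a single arc ending at the other linking vertex (or stopping partway), and arc-links are traversed trivially. The main obstacle is the bookkeeping of configurations in these last two cases, but since the $x_i$'s can lie in only finitely many links and the graphs have very few branch vertices, each configuration is resolved by inspection: choose the starting direction from $w$ and the traversal order of links so that every link containing some $x_i$ is entered and traversed far enough to pick up those points before the arc terminates.
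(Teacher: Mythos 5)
Your circle and $\theta$-curve cases are fine (the enumeration for the $\theta$-curve is correct, and is the part the paper dismisses as easy), but there is a genuine gap in the happy-face and baguette cases, and it sits exactly where the content of the lemma lies. The ``key observation'' you invoke --- that from either linking vertex of a circle-link one can reach every interior point of that circle by a single arc ending at the other linking vertex --- is false: an arc that enters a circle-link at one linking vertex and leaves at the other traverses exactly one of the two sides of that circle and misses every point on the other side. The only way a single arc can cover interior points on \emph{both} sides of a circle-link is to have an endpoint inside that link (enter at one linking vertex, pass through the other, and terminate partway back), and since one endpoint of your arc is already pinned at $w$, at most one circle-link not containing $w$ can be swept this way. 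Your ``resolved by inspection'' therefore skips the one genuinely critical configuration: one of $w, x_1, x_2, x_3$ on the interior of each of the four cycle-edges, so that \emph{both} circle-links carry required points on both of their sides. This is precisely the case the paper isolates: it first observes that any unoccupied cycle-edge can be deleted to reduce to the $\theta$-curve, and in the remaining configuration deletes the connecting edge ($e_4$ for the happy-face, $e_5$ for the baguette) to exhibit the figure-8 (resp.\ dumbbell) traversal that starts at $w$, sweeps the rest of $w$'s circle, crosses over, and terminates inside the other circle. Without this analysis, or an equivalent one, your case (3) does not go through as written.

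A secondary, fixable point: the reduction to interior points via Lemma~\ref{lem_wlogpointsonedges} needs a word of justification here, since that lemma's proof extends an arc at its free endpoints, whereas now one endpoint is pinned at $w$; the extension still works (a vertex adjacent to a point interior to the arc can be absorbed by prolonging the arc along the relevant edge at its free end), but it should be said. The paper sidesteps the issue entirely, since its deletion argument handles points that happen to be vertices without any relabelling.
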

\begin{proof}
If $G$ is either a circle or a $\theta$ curve, then it is easy to see that the assertion of the lemma holds. 

So let $G$ be the happy-face curve with cycles $C_1,C_2$, degree-4 vertex $a \in C_1 \cap C_2$, degree-3-vertices $b \in C_1 \setminus C_2$ and $c \in C_2 \setminus C_1$ and edges $\Set{e_0,e_1}= E(C_1)$, $\Set{e_2,e_3}= E( C_2)$ and $e_4=bc$. Pick points $w, x_0, x_1, x_2 \in G$. Since removing one of $e_0, \ldots, e_3$ reduces $G$ to a $\theta$-curve, we only have to consider the case when one of $w, x_0, x_1, x_2$ belongs to the interior of each $e_i$ for $0 \leq i \leq 3$. But now, since $G \setminus e_4$ is a figure-8-curve, the assertion of the lemma is clear.

Finally, assume $G$ is the baguette curve with cycles $C_1,C_2$, degree-3 vertices $a,b \in C_1$ and $c,d \in C_2$ and edges $\Set{e_0,e_1} =E( C_1)$, $\Set{e_2,e_3} = E( C_2)$ and $e_4=ac,e_5=bd$ between $C_1$ and $C_2$. Pick points $w, x_0, x_1, x_2 \in G$. Since removing one of $e_0, \ldots, e_3$ reduces $G$ to a $\theta$-curve, we only have to consider the case when one of $w, x_0, x_1, x_2$ belongs to the interior of each $e_i$ for $0 \leq i \leq 3$. But now, since $G \setminus e_5$ is a dumbbell with $w$ lying on one of its cycles, the assertion of the lemma is again clear.
\end{proof}

\begin{theorem} 
\label{chain_4-ac}
A  graph $G$ is $4$-ac if and only if it is a chain graph such that
\begin{enumerate}
\item all links are $2$-connected and $4$-ac, 
\item all interior links are edges, 
\item\label{chain_4-ac(3)} if $v$ is a cut vertex and $L$ a link of $G$ with $v \in L$, then $\operatorname{deg}_L(v) \leq 2$.
\end{enumerate} 
\end{theorem}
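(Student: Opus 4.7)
I handle the two directions separately.

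\emph{Necessity.} Suppose $G$ is $4$-ac. Since $4$-ac implies $3$-ac, Theorem~\ref{thm_3acChain} yields that $G$ is a chain graph of $2$-connected links. For (1), four points in a single link $L$ give an arc $\alpha$ in $G$ through them, with endpoints among the four by the standing convention; since $\alpha$ cannot revisit either of the (at most two) cut vertices separating $L$ from the rest of $G$, it lies entirely in $L$, making $L$ itself $4$-ac. For (2), suppose an interior link $L_j$ between cut vertices $v_{j-1},v_j$ has more than one edge; then $L_j$ has at least three vertices and so $\deg_{L_j}(v_{j-1}) \geq 2$. Picking interior points $x,y$ on two distinct edges at $v_{j-1}$ in $L_j$ together with interior points $p,q$ on either side of $L_j$ in $G$, any arc through $\{p,x,y,q\}$ would have a middle segment from $v_{j-1}$ to $v_j$ in $L_j$ containing both $x$ and $y$, which forces a revisit of $v_{j-1}$: a contradiction. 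For (3), if $\deg_L(v) \geq 3$ for a cut vertex $v$ of some link $L$, choose interior points $x_0,x_1,x_2$ of three distinct edges at $v$ in $L$ and a point $q$ on the opposite side of $v$ in $G$; Lemma~\ref{tripodlemma} forces one arc-endpoint into a pendant $[v,x_i]$, and a short case analysis shows the arc can then visit at most two of $x_0,x_1,x_2$ before having to cross $v$ to reach $q$.

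\emph{Sufficiency.} Let $G$ satisfy (1)--(3) and fix $x_0,\ldots,x_3 \in G$, lying on edge interiors by Lemma~\ref{lem_wlogpointsonedges}. Let $L_j,\ldots,L_{j+m}$ be the minimal sub-chain covering them. If $m=0$, apply $4$-ac of $L_j$ from (1). Otherwise, (2) makes each link $L_{j+1},\ldots,L_{j+m-1}$ a single edge which an arc crosses linearly from $v_{i-1}$ to $v_i$, picking up any $x_k$ en route. The substantial subtask is to build, in each of the two end sub-links, an arc through its $x_k$'s ending at the relevant cut vertex. If an end sub-link is an interior link of $G$ (hence $K_2$), this is trivial. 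Otherwise, say $L_j$ is an end link of $G$ with $L_j \neq K_2$; then (3) forces $\deg_{L_j}(v_j) = 2$. Replacing the two edges at $v_j$ in $L_j$ by a single edge passing through $v_j$ yields a cyclically connected $4$-ac graph $\tilde L_j$ in which $v_j$ is an interior point of an edge. Theorem~\ref{cyclic_4ac} supplies a basic $4$-ac subgraph of $\tilde L_j$ containing the (at most three) edges hosting the relevant $x_k$'s together with the edge hosting $v_j$, and Lemma~\ref{basic_4-ac} yields an arc through these $x_k$'s with $v_j$ as an endpoint. Pulling this arc back to $L_j$ and concatenating with the analogous arc in $L_{j+m}$ and the linear sub-arcs through the interior links at their shared cut vertices delivers the desired arc in $G$.

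\emph{Main obstacle.} The principal difficulty is the end-link arc construction in sufficiency: producing an arc in an end link $L_j \neq K_2$ of $G$ through prescribed points and ending at the specified cut vertex $v_j$. Condition (3) is precisely the hypothesis needed to smooth $v_j$ into an interior edge point of $\tilde L_j$, enabling Lemma~\ref{basic_4-ac} to be invoked via the basic $4$-ac subgraph furnished by Theorem~\ref{cyclic_4ac}; without this smoothing trick, $v_j$ would remain a genuine vertex of any basic $4$-ac subgraph, and Lemma~\ref{basic_4-ac} would not directly apply.
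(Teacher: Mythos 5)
Your proposal is correct and follows essentially the same route as the paper: necessity via Theorem~\ref{thm_3acChain} together with point-picking arguments at a cut vertex (two edges at $u$ inside a non-trivial interior link for (2), three edges at $v$ inside a link plus one on the far side for (3), both resolved by Lemma~\ref{tripodlemma}); sufficiency by reducing to the minimal finite sub-chain, crossing the interior $K_2$-links linearly, and using Theorem~\ref{cyclic_4ac} to place the end-link points and the linking vertex in a basic $4$-ac subgraph where condition (3) makes that vertex a degree-$2$ (hence topologically interior) point, so that Lemma~\ref{basic_4-ac} supplies an arc ending there. Your explicit smoothing of $v_j$ into $\tilde L_j$ is just a more verbose rendering of the paper's observation that $v_1$ has degree $2$ in the basic subgraph $H$, and your use of the minimal finite sub-chain absorbs the infinite-chain case that the paper treats separately.
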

\begin{proof} Suppose $G$ is a $4$-ac graph. Then $G$ is $3$-ac, and so a chain graph of $2$-connected links by Theorem~\ref{thm_3acChain}. Item (1) is now clear. 

For (2), suppose for a contradiction, there is a chain graph $G$ of $2$-connected links with decomposition $\set{L_n}:{n \in J}$ for $J \subseteq \Z$ with $|J| \geq 3$ that is $4$-ac but one of the interior links, say $L_0$, is not an arc. Consider the subgraph $G' = L_{-1} \cup L_0 \cup L_1$ where $L_{-1} \cap L_0=\{u\}$ and $L_0 \cap L_1=\{v\}$. Pick $x_0$ in $L_{-1} \setminus \{u\}$ and $x_3$ in $L_1 \setminus \{v\}$. Observe that any arc containing $x_0$ and $x_3$ and any two further points in $L_0$ must (without loss of generality) start at $x_0$  and end at $x_3$. So it suffices to show that we can choose $x_1, x_2$ in $L_0 \setminus \Set{u,v}$ so that there is no arc in $L_0$ starting at $u$, ending at $v$ and containing both $x_1$ and $x_2$. Consider $u$ in $L_0$. By $2$-connectedness of $L_0$ and the fact that $L_0$ is not an arc, we must have $\operatorname{deg}_{L_0}(u) \geq 2$. Pick $x_1$ and $x_2$ from the interior of distinct edges of $L_0$ incident with $u$. Now it is clear that no arc in $L_0$ starting at $u$ and containing $x_1$ and $x_2$, can end at $v$, a contradiction.

For (3), suppose there are links $L_0$ and $L_1$ with $L_0 \cap L_1 = \singleton{v}$ and $\operatorname{deg}_{L_0}(v) \geq 3$. Then picking three vertices on the interior of different edges incident with $v$ in $L_0$, and picking a fourth vertex on the interior of an edge incident with $v$ in $L_1$ shows that $G$ is not $4$-ac, a contradiction. 

For the converse, assume that $G$ is a chain graph satisfying properties (1)--(3). We may suppose that $G$ contains a non-trivial link $L$ not isomorphic to $K_2$. If the block graph of $G$ is infinite, it follows from $(2)$ that $G$ is isomorphic to $L$ with a one-way infinite ray $R$ attached to a vertex $v$ of $L$. But then it is clear that it suffices to show that $L$ with a single extra edge attached at $v$ is $4$-ac. Thus we may assume, by (1) and the foregoing discussion, that $G$ consists of finite number $\geq 2$ of links. So let $L_1,\ldots,L_n$ with $n\geq 2$ and $L_i \cap L_{i+1} = \Set{v_i}$ be the decomposition of $G$ into links according to properties (1)--(3). Pick four points $x_0, \ldots, x_3 \in G$. If all four points are contained in the same link, then we are done by (1). Otherwise, find basic $4$-ac subgraphs $H$ and $H'$ in $L_1$ and $L_n$ containing $v_1 \cup \p{\Set{x_0,\ldots,x_3} \cap L_1}$ and $v_{n-1} \cup \p{\Set{x_0,\ldots,x_3} \cap L_n}$ respectively. Since by (3), $v_1$ and $v_{n-1}$ have degree $2$ in $H$ and $H'$ respectively, it follows from Lemma~\ref{basic_4-ac} that there are arcs $\alpha$ and $\alpha'$ in $H$ and $H'$ picking up all vertices $\Set{x_0,\ldots,x_3} \cap L_1$ and $\Set{x_0,\ldots,x_3} \cap L_n$ and starting at $v_1$ and $v_{n-1}$ respectively. Since all middle links are arcs by (2), the arc $\alpha \cup L_2 \cup \ldots \cup L_{n-1} \cup \alpha'$ witnesses that $G$ is $4$-ac. 
\end{proof}

\subsection{Characterizing \texorpdfstring{$5$}{5}-ac graphs}

Our first theorem reduces the problem of characterizing all $5$-ac graphs to the cyclically connected case.

\begin{theorem}\label{chain_5-ac}
Let $G$ be a  graph which is \emph{not} cyclically connected. Then $G$ is  $5$-ac if and only if $G$ is homeomorphic to one of the following graphs:

(a) a finite path (equivalently, an arc), a ray or a double ray; 
(b) a lollipop with or without the endpoint;
(c) the dumbbell graph, or
(d) the figure-of-eight-graph.
\end{theorem}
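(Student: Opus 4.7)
The proof decomposes into two directions. For the forward direction, I would verify that each graph in the list is $5$-ac. The finite cases---arc, lollipop, dumbbell, and figure-of-eight---are already known to be $\omega$-ac by \cite{acpaper}. The infinite variants (ray, double ray, lollipop without endpoint) immediately reduce to those finite cases, since any five points in such a space lie inside a finite connected subcomplex which is itself a finite path or a finite lollipop.

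For the converse, assume $G$ is $5$-ac but not cyclically connected. Since $5$-ac implies $4$-ac, Theorem~\ref{chain_4-ac} gives that $G$ is a chain graph $\{L_i\}$ of $2$-connected $4$-ac links, with every interior link being a copy of $K_2$ and $\operatorname{deg}_L(v) \leq 2$ at every cut vertex $v$ of $G$ in its incident link $L$. If the chain consists of a single link, then either $G = K_2$ (an arc) or $G$ is a $2$-connected graph larger than $K_2$, in which case $G$ is cyclically connected by Theorem~\ref{thm_27}, contradicting our hypothesis. Hence we may assume the chain contains at least two links.

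The key step is to show that every end link of the chain is either a $K_2$ or a (subdivision of a) cycle. Suppose for contradiction that an end link $L_1$ is a $2$-connected $4$-ac graph which is neither; then by Theorem~\ref{cyclic_4ac}, $L_1$ is (a subdivision of) a $\theta$-curve, a happy-face curve, or a baguette curve. In each case, I claim that there exist three points of $|G|$ whose removal produces at least five components, so that Lemma~\ref{cutting_set} gives the desired contradiction. For the $\theta$-curve case, let $a, b$ be the two topological degree-$3$ vertices of $L_1$; since condition~(3) of Theorem~\ref{chain_4-ac} forces the cut vertex $v_1 \in L_1$ to lie on the interior of one of the three $a$-to-$b$ arcs of $L_1$, removing $\{a, b, v_1\}$ cuts $L_1$ into four open sub-arcs (the arc through $v_1$ contributes two pieces and the other two arcs contribute one each), while the remainder of $G$ detaches from $L_1$ at $v_1$ to form a fifth component. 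The happy-face and baguette cases proceed analogously, removing $v_1$ together with two carefully chosen topological branch points of $L_1$ (e.g.\ the three branch points $\{a,b,c\}$ of topological degrees $4,3,3$ in the happy-face case) so that $L_1$ breaks into at least four pieces and the rest of $G$ beyond $v_1$ contributes a fifth.

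Once this key step is established, the rest is a routine case analysis based on how many end links of the chain are cycles (as opposed to $K_2$'s) and whether the chain is finite, half-infinite, or two-way infinite. No cycle end links yield an arc, a ray, or a double ray. Exactly one cycle end link yields a lollipop (finite chain) or a lollipop without endpoint (half-infinite chain). Two cycle end links yield a figure-of-eight (chain of length two, the two cycles sharing the unique cut vertex) or a dumbbell (chain of length at least three, the interior $K_2$-links providing the handle). I anticipate the main obstacle to be the bookkeeping inside the happy-face and baguette subcases of the cutting-set argument, where the exact choice of the three removed points depends on which topological edge of $L_1$ contains the cut vertex $v_1$; but in each configuration one can verify that suitable branch points adjacent to the edge carrying $v_1$ do produce the required five components.
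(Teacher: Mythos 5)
Your outline of the forward direction and of the final bookkeeping (which end links are cycles versus $K_2$'s, and whether the chain is finite, half-infinite or two-way infinite) matches the paper, but the key step has a genuine gap. You claim that a $2$-connected, $4$-ac end link which is neither $K_2$ nor a cycle must, ``by Theorem~\ref{cyclic_4ac}'', be a subdivision of a $\theta$-curve, a happy-face curve, or a baguette curve. That is a misreading of the theorem: condition (3) there says only that every \emph{four edges} lie in a basic $4$-ac \emph{subgraph}, not that the whole link is homeomorphic to one of the basic graphs. A subdivision of $K_4$ (or of $K_5$ minus an edge) is $2$-connected and $4$-ac but is none of the four basic graphs, so your trichotomy omits infinitely many possible end links and the case analysis never gets off the ground for them.

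Worse, the method you propose cannot be repaired to cover these missing cases. Take $G$ to be a subdivision of $K_4$ with subdivision vertex $v_0$ on one edge and a pendant edge attached at $v_0$. This $G$ is a chain graph satisfying all the hypotheses of Theorem~\ref{chain_4-ac} (so it is $4$-ac), it is not cyclically connected, and it is not in the list, so it must be shown \emph{not} to be $5$-ac --- yet one checks directly that no set of $3$ points disconnects $|G|$ into more than $4$ components, so Lemma~\ref{cutting_set} is powerless here. The paper's proof avoids cutting sets entirely at this step: assuming the end link $L_0$ is neither an arc nor a circle, it locates the cut vertex $v_0$ (which has degree $2$ in $L_0$ by Theorem~\ref{chain_4-ac}(\ref{chain_4-ac(3)})) together with a branch vertex $w$ of $L_0$ adjacent to it along an edge $e_0$, places one point on each of the two $L_0$-edges at $v_0$, on the two further edges at $w$, and on an edge of the next link at $v_0$, and then derives a contradiction by combining Lemma~\ref{tripodlemma} (forcing the endpoints of any candidate arc to lie near $w$ and near $v_0$ respectively) with the parity Lemma~\ref{lem_oddevencuts} applied to the $2$-edge cut $\Set{e_0,e_1}$ separating $L_0 \setminus \singleton{v_0}$ from the rest of $G$ (forcing both endpoints onto the same side of that cut). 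You would need an argument of this local, parity-based kind --- or some other device --- to handle the end links your classification misses.
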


See the following diagram for sketches of the lollipop graph, the dumbbell graph, and the figure-of-eight graph.

\vspace{6pt}
\begin{minipage}{.25\textwidth}
\begin{center}
\begin{tikzpicture}[scale=.6, use Hobby shortcut]

\draw (0,0) circle (1.5cm);

\node[circle, draw, inner sep=0.6pt, minimum width=4pt, fill=black]  (a) at (-1.5,0) {};

\node[circle, draw, inner sep=0.6pt, minimum width=4pt, fill=black]  (b) at (-3.5,0) {};

\draw (a) to (b);

\end{tikzpicture}
\end{center}
\end{minipage}
\begin{minipage}{.4\textwidth}
\begin{center}
\begin{tikzpicture}[scale=.6, use Hobby shortcut]
\begin{scope}[xshift=-1.5 cm]
\draw (0,0) circle (1.5cm);
\node[circle, draw, inner sep=0.6pt, minimum width=4pt, fill=black]  (a1) at (0:1.5cm) {};
\end{scope}

\begin{scope}[xshift=2.5 cm]

\draw (0,0) circle (1.5cm);
\node[circle, draw, inner sep=0.6pt, minimum width=4pt, fill=black]  (a2) at (180:1.5cm) {};
\end{scope}

\draw (a1) to (a2);

\end{tikzpicture}
\end{center}
\end{minipage}
\begin{minipage}{.28\textwidth}
\begin{center}
\begin{tikzpicture}[scale=.6, use Hobby shortcut]
\begin{scope}[xshift=-1 cm]
\draw (0,0) circle (1.5cm);
\node[circle, draw, inner sep=0.6pt, minimum width=4pt, fill=black]  (a1) at (0:1.5cm) {};
\end{scope}

\begin{scope}[xshift=2 cm]

\draw (0,0) circle (1.5cm);
\end{scope}


\end{tikzpicture}
\end{center}
\end{minipage}
\vspace{6pt}

\begin{proof} It is straightforward to check that each of the listed graphs is indeed $5$-ac. 
So suppose $G$ is a  non-cyclically connected but $5$-ac graph. By Theorem~\ref{chain_4-ac}, $G$ is a chain graph with multiple links such that all interior links are arcs. If $G$ is not a double ray, then we may suppose that $G$ has a block decomposition $\set{L_n}:{n \in J}$ where $J$ is an interval in $\Set{0} \cup \N$ containing $0$. We show that $L_0$ is either an arc or circle, for then any end-link of the block decomposition of $G$ is a circle or an arc, and all interior links are arcs -- and the theorem follows immediately. 

\smallskip

\textbf{Claim:} \emph{$L_0$ is either a circle or an arc.} 
Indeed, let $v_0$ be the linking vertex $L_0 \cap L_1$. By Theorem~\ref{chain_4-ac}~(\ref{chain_4-ac(3)}), if $L_0$ is not an arc, $v_0$ has degree $2$ in $L_0$. Let $e_0$ and $e_1$ be the two edges of $L_0$ incident with $v_0$. If $L_0$ is not a circle, we may suppose without loss of generality that $e_0 = v_0w$ where $w$ has degree $3$ in $L_0$. Write $e_2,e_3$ for the other two edges incident with $w$. Pick four points $x_0,\ldots,x_3$ with $x_i$ on the interior of $e_i$ for $0 \leq i \leq 3$ and pick $x_4$ on the interior of an edge $e_4$ in $L_1$ incident with $v_0$. Then these five points witness that $G$ is not $5$-ac: By Lemma~\ref{lem_oddevencuts}, any arc would need to start and end on the same side of the edge cut $\Set{e_0,e_1} = E(L_0 \setminus \singleton{v_0}, G \setminus L_0)$, but also needs to start and end in a neighbourhood of $w$ and a neighbourhood of $v_0$ respectively by Lemma~\ref{tripodlemma}, a contradiction.
\end{proof}

Thus, we may concentrate on cyclically connected graphs. Here, we have the following characterization.

\begin{theorem}
\label{thm_5acChar}
A cyclically connected graph $G$ is $5$-ac if and only if
\begin{enumerate}
\item $G$ has maximum degree $4$,
\item no $3$, or fewer, vertices of $G$ cut $G$ into $5$ or more components,
\item $G$ is not a cycle graph of three (non-trivial) links $L_0,L_1,L_2$ such that the linking vertex $v \in L_0 \cap L_1$ has both $\operatorname{deg}_{L_0}(v) = 2 = \operatorname{deg}_{L_1}(v)$, and
\item $G$ is not the union of three (edge-disjoint) connected subgraphs $L_0,L_1,L_2$ with two linking vertices $v,a$ such that $L_0 \cap L_1 = L_0 \cap L_2 = L_1 \cap L_2 = \Set{v,a}$ and $\operatorname{deg}_{L_2}(v) = 2$.
\end{enumerate}
\end{theorem}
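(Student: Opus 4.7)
The proof splits naturally into necessity ($\Rightarrow$) and sufficiency ($\Leftarrow$). For necessity I would proceed by contrapositive against each of the four conditions in turn, producing in each case a concrete five-point witness. Condition~(1) follows from a counting refinement of Lemma~\ref{tripodlemma}: if $v$ has degree at least $5$ and $x_1, \ldots, x_5$ are interior points of five distinct edges at $v$, then any arc through them touches $v$ at most once (and so has at most two ``contact sides'' at $v$) while having at most two endpoints; but each of the five edges at $v$ containing an $x_i$ must either supply at least one contact side at $v$ or harbour an arc-endpoint, giving $5 \leq 2+2$, a contradiction. Condition~(2) is immediate from Lemma~\ref{cutting_set}.

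For condition~(3) I would take $G$ to be a cycle graph of three nontrivial links $L_0, L_1, L_2$ with linking vertices $v = L_0 \cap L_1$, $u = L_1 \cap L_2$, $w = L_0 \cap L_2$ and $\operatorname{deg}_{L_0}(v) = \operatorname{deg}_{L_1}(v) = 2$. Place $x_1, x_2$ on the two edges of $L_0$ at $v$, $x_3, x_4$ on the two edges of $L_1$ at $v$, and $x_5$ on an interior point of an edge in $L_2$. The counting used for~(1) shows any arc through $x_1,\ldots,x_4$ must pass through $v$ internally using exactly two of the four edges at $v$, with its two endpoints in the remaining two. A case-split on which pair carries the endpoints gives: if both endpoints lie in $L_0$-edges (or both in $L_1$-edges), then the two halves of the arc must each travel from $L_0$ to $L_1$ (or vice versa) via $L_2$, each consuming both of $u$ and $w$---impossible without a revisit; and if one endpoint lies in an $L_0$-edge and the other in an $L_1$-edge, both halves remain inside $L_0 \cup L_1$ and so miss $x_5$. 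Condition~(4) is handled analogously: from $\operatorname{deg}_{L_2}(v) = 2$ and the maximum degree bound from~(1) one gets $\operatorname{deg}_{L_0}(v) = \operatorname{deg}_{L_1}(v) = 1$, and a similar case-split on a five-point witness at $v$ (with the fifth point placed on an edge at $a$) forces every candidate arc to revisit $a$.

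For sufficiency, assume $G$ is cyclically connected and satisfies (1)-(4). My first step is to check that $G$ is already $4$-ac via Theorem~\ref{cyclic_4ac}: if some pair $\{a,b\}$ cut $G$ into at least four components $C_1, \ldots, C_k$, then $2$-connectedness (from cyclic connectedness) together with the maximum degree bound from~(1) forces $k=4$ with $a$ and $b$ each contributing exactly one edge to each $C_i$; regrouping as $L_0 = G[C_1 \cup \{a,b\}]$, $L_1 = G[C_2 \cup \{a,b\}]$, $L_2 = G[C_3 \cup C_4 \cup \{a,b\}]$ (with all attaching edges) then violates~(4), since $\operatorname{deg}_{L_2}(a) = 2$. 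Given five points $x_1, \ldots, x_5 \in |G|$, I reduce to the case of interior-of-edge points via Lemma~\ref{lem_wlogpointsonedges}, use $4$-ac to obtain an arc $\alpha$ through $x_1, \ldots, x_4$, and then apply Theorem~\ref{thm_existencealternatingwalk} to reroute $\alpha$ through $x_5$ by an alternating-walk construction.

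The main obstacle is the cluster case: when three or four of the $x_i$ lie on edges incident to a single degree-$4$ vertex $v$, Lemmas~\ref{tripodlemma} and~\ref{lem_oddevencuts} rigidly constrain the shape of every arc near $v$, leaving little freedom for the alternating-walk rerouting to pick up $x_5$. The analysis therefore splits further on the cluster pattern at $v$ and on whether additional clusters occur at secondary high-degree vertices, and conditions~(3) and~(4) enter exactly to exclude the ``narrow waist'' configurations---a cycle graph of three links in the style of~(3), or a $K_{2,4}$-type pinch across two linking vertices in the style of~(4)---at which the arc-shape constraints from Lemmas~\ref{tripodlemma} and~\ref{lem_oddevencuts} become incompatible with any extension to $x_5$.
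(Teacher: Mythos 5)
Your necessity argument is essentially the paper's: you use the same five-point witnesses for conditions (1), (3) and (4), and your endpoint-counting refinement of Lemma~\ref{tripodlemma} (at most two pass-through germs at $v$ plus at most two arc endpoints must serve all five marked edges, so $5\leq 4$) is a correct repackaging of the paper's triple application of that lemma. The case split for (3) -- both endpoints on the $L_0$-side forces both halves of the arc through each of the single vertices $u$ and $w$, while mixed endpoints trap the arc in $L_0\cup L_1$ away from $x_5$ -- is sound and if anything a little cleaner than the paper's argument.

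The sufficiency direction, however, has a genuine gap: everything after ``use $4$-ac to obtain an arc $\alpha$ through $x_1,\ldots,x_4$ and then apply Theorem~\ref{thm_existencealternatingwalk} to reroute $\alpha$ through $x_5$'' is a statement of intent rather than a proof, and this is precisely where the bulk of the work lies. Two concrete problems. First, Theorem~\ref{thm_existencealternatingwalk} augments a family of disjoint $A$--$B$ paths; it does not ``reroute an arc through a point,'' and a bare arc through four points carries too little structure to control where the fifth point can be inserted. The paper instead retains the whole basic $4$-ac subgraph $X$ (circle, $\theta$-, baguette- or happy-face-curve) containing the first four points, attaches $x_5$ to $X$ by \emph{two} independent paths via Menger, and classifies the attachment pattern; alternating walks enter only in the single hardest configuration (a happy-face curve with one marked point on each of its five edges), where an $A$--$B$ walk alternating with respect to $\closure{e_5}$ is converted, via the moreover-part of Lemma~\ref{lem_existencealternatingwalk}, into two independent paths whose symmetric difference with $\closure{e_5}$ produces a $\theta$-curve or figure-$8$ through all five points. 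Second, your claim that conditions (3) and (4) ``enter exactly to exclude the narrow waist configurations'' is exactly the assertion that needs proof: in the paper this is Claims 3--5 of Case 1 of Proposition~\ref{prop_5acSuff}, where (3) rules out a second cut-vertex separating $x_5$ from one side of the happy-face curve and (4) forces an extra arc between the two edges at $b$ when $b$ itself is a cut-vertex. Without carrying out this case analysis (three cases for $X$, a dozen attachment subcases, and the five claims), the backward implication is not established. Your opening reduction to $4$-ac is fine, though it also follows more directly from condition (2) alone via Lemma~\ref{lem_cutting_set} (since $(\star_3)$ implies $(\star_2)$), which is the route the paper takes.
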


Note that the combinatorial condition (2) is equivalent to the topological statement (2${}'$) 
`no $3$ points of $|G|$ cut $|G|$ into $5$ or more components', and this is what we use below. (To see this equivalence, observe that (2${}'$) is automatically stronger than (2), and for the converse, replace any point of $|G|$ in the interior of an edge with one of the vertices at the ends of the edge.)
We also remark that the three graphs sketched below witness that even given (1), conditions (2)--(4) are mutually independent. A $K_5^-$, i.e. a $K_5$ with one of the edges removed, violates (2) but satisfies (3) and (4). 
Similarly, the second graph is a non-$5$-ac graph which fails (3) (as the diagram shows, it is a cycle graph of the type excluded by (3)) but satisfies (2) and (4).  
Finally, the third graph below satisfies (2) and (3) but not condition (4). 
(To see that (4) is violated, consider the decomposition as shown in the diagram, where the restriction imposed by (4) on degrees fails.)

\vspace{8pt}

\begin{minipage}{.32\textwidth}
\begin{center}
\begin{tikzpicture}[scale=.7]
\node[circle, draw, inner sep=0.6pt, minimum width=5pt, fill=black]  (a) at (90:2cm) {};
\node[circle, draw, inner sep=0.6pt, minimum width=5pt, fill=black]  (b) at (162:2cm) {};
\node[circle, draw, inner sep=0.6pt, minimum width=5pt, fill=black]  (c) at (234:2cm) {};
\node[circle, draw, inner sep=0.6pt, minimum width=5pt, fill=black]  (d) at (306:2cm) {};
\node[circle, draw, inner sep=0.6pt, minimum width=5pt, fill=black]  (e) at (18:2cm) {};

\draw (a) -- (b);
\draw (a) -- (c);
\draw (a) -- (d);
\draw (a) -- (e);

\draw (b) -- (c);
\draw (b) -- (d);

\draw (c) -- (d);
\draw (c) -- (e);

\draw (d) -- (e);
\end{tikzpicture}
\end{center}
\end{minipage}
\begin{minipage}{.32\textwidth}
\begin{center}
%
%
%
%
%
%
%
%
%
%
%
%
\begin{tikzpicture}[scale=.7]
\tikzstyle{enddot}=[circle,inner sep=0cm, minimum size=10pt,color=hellrot,fill=hellrot]
\tikzstyle{markline}=[draw=hellrot,line width=10pt]
\colorlet{hellblau}{blue!20!white}
\colorlet{hellrot}{red!40!white}
\colorlet{hellgrau}{black!30!white}

\node[circle, draw, inner sep=0pt, minimum size=5pt, fill=black]  (a) at (90:2cm) {};
\node[circle, draw, inner sep=0pt, minimum size=5pt, fill=black] (b) at (162:2cm) {};
\node[circle, draw, inner sep=0pt, minimum size=5pt, fill=black]  (c) at (234:2cm) {};
\node[circle, draw, inner sep=0pt, minimum size=5pt, fill=black]  (d) at (306:2cm) {};
\node[circle, draw, inner sep=0pt, minimum size=5pt, fill=black]  (e) at (18:2cm) {};

\node[circle, draw, inner sep=0pt, minimum size=5pt, fill=black]  (f) at (-18:.77cm) {};
\node[circle, draw, inner sep=0pt, minimum size=5pt, fill=black]  (g) at (198:.77cm) {};

\draw (a) -- (b);
\draw (a) -- (c);
\draw (a) -- (d);
\draw (a) -- (e);

\draw (b) -- (c);

\draw (c) -- (d);

\draw (f) -- (e);
\draw (g) -- (b);

\draw (d) -- (e);

\begin{scope}[on background layer]
   
    \draw[hellblau,line width=10pt] (a.center) -- (b.center);
     \draw[hellblau,line width=10pt] (b.center) -- (c.center);
     \node[circle,inner sep=0cm, minimum size=10pt,color=hellblau,fill=hellblau] at (a){}; 
   \node[circle,inner sep=0cm, minimum size=10pt,color=hellblau,fill=hellblau] at (b){}; 
   \node[circle,inner sep=0cm, minimum size=10pt,color=hellblau,fill=hellblau] at (c){};

        \draw[hellblau,line width=10pt](a.center) -- (b.center); 
      \draw[hellblau,line width=10pt] (b.center) -- (c.center);
             \draw[hellblau,line width=10pt] (c.center) -- (g.center);
         \draw[hellblau,line width=10pt] (g.center) -- (a.center);
   \draw[fill=hellblau] (a.center) -- (b.center) -- (c.center) -- (g.center) -- cycle;
   \node[circle,inner sep=0cm, minimum size=10pt,color=hellblau,fill=hellblau] at (a){}; 
   \node[circle,inner sep=0cm, minimum size=10pt,color=hellblau,fill=hellblau] at (b){}; 
   \node[circle,inner sep=0cm, minimum size=10pt,color=hellblau,fill=hellblau] at (c){}; 
   \node[circle,inner sep=0cm, minimum size=10pt,color=hellblau,fill=hellblau]at (g){};  
   
     \draw[markline] (a.center) -- (e.center); 
      \draw[markline] (e.center) -- (d.center);
             \draw[markline] (d.center) -- (f.center);
         \draw[markline] (f.center) -- (a.center);
   \draw[fill=hellrot] (a.center) -- (e.center) -- (d.center) -- (f.center) -- cycle;
   \node[enddot] at (a){}; 
   \node[enddot] at (e){}; 
   \node[enddot] at (d){}; 
   \node[enddot] at (f){};  

    \draw[hellgrau,line width=10pt] (c) -- (d); 
     \node[circle,inner sep=0cm, minimum size=10pt,color=hellgrau,fill=hellgrau] at (c){}; 
   \node[circle,inner sep=0cm, minimum size=10pt,color=hellgrau,fill=hellgrau] at (d){}; 
\end{scope}

\end{tikzpicture}
\end{center}
\end{minipage}
\begin{minipage}{.32\textwidth}
\begin{center}
\begin{tikzpicture}[scale=.7]
\tikzstyle{enddot}=[circle,inner sep=0cm, minimum size=10pt,color=hellrot,fill=hellrot]
\tikzstyle{markline}=[draw=hellrot,line width=10pt]
\colorlet{hellblau}{blue!20!white}
\colorlet{hellrot}{red!40!white}
\colorlet{hellgrau}{black!30!white}

\node[circle, draw, inner sep=0pt, minimum size=5pt, fill=black]  (a) at (90:2cm) {};
\node[circle, draw, inner sep=0pt, minimum size=5pt, fill=black] (b) at (162:2cm) {};
\node[circle, draw, inner sep=0pt, minimum size=5pt, fill=black]  (c) at (234:2cm) {};
\node[circle, draw, inner sep=0pt, minimum size=5pt, fill=black]  (d) at (306:2cm) {};
\node[circle, draw, inner sep=0pt, minimum size=5pt, fill=black]  (e) at (18:2cm) {};

\node[circle, draw, inner sep=0pt, minimum size=5pt, fill=black]  (f) at (-18:.77cm) {};

\draw (a) -- (b);
\draw (a) -- (c);
\draw (a) -- (d);
\draw (a) -- (e);

\draw (b) -- (c);

\draw (c) -- (d);
\draw (c) -- (e);

\draw (d) -- (e);

\begin{scope}[on background layer]
   
    \draw[hellblau,line width=10pt] (a.center) -- (b.center);
     \draw[hellblau,line width=10pt] (b.center) -- (c.center);
     \node[circle,inner sep=0cm, minimum size=10pt,color=hellblau,fill=hellblau] at (a){}; 
   \node[circle,inner sep=0cm, minimum size=10pt,color=hellblau,fill=hellblau] at (b){}; 
   \node[circle,inner sep=0cm, minimum size=10pt,color=hellblau,fill=hellblau] at (c){}; 
   
    \draw[hellgrau,line width=10pt] (a) -- (c); 
     \node[circle,inner sep=0cm, minimum size=10pt,color=hellgrau,fill=hellblau] at (a){}; 
   \node[circle,inner sep=0cm, minimum size=10pt,color=hellgrau,fill=hellblau] at (c){}; 
   
     \draw[markline] (a.center) -- (e.center); 
      \draw[markline] (e.center) -- (d.center);
       \draw[markline] (d.center) -- (c.center);
        \draw[markline] (c.center) -- (f.center);
         \draw[markline] (f.center) -- (a.center);
   \draw[fill=hellrot] (a.center) -- (e.center) -- (d.center) -- (c.center) -- (f.center) -- cycle;
   \node[enddot] at (a){}; 
   \node[enddot] at (e){}; 
   \node[enddot] at (d){}; 
   \node[enddot] at (c){};
   \node[enddot] at (f){};  

\end{scope}

\end{tikzpicture}
\end{center}
\end{minipage}
\vspace{6pt}

We split the proof of Theorem~\ref{thm_5acChar} into two parts. First, in Proposition~\ref{prop_5acNec}, we will show that the four conditions listed in the characterization are necessary (replacing (2) with (2${}'$) where convenient). In Proposition~\ref{prop_5acSuff} further below, we will then show the converse direction.

\begin{prop}
\label{prop_5acNec}
Any cyclically connected $5$-ac graph satisfies properties (1)--(4) above.
\end{prop}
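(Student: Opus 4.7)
The approach is to prove each of (1)--(4) necessary by showing that if $G$ fails the given condition, one can exhibit five points of $|G|$ that lie on no common arc. The recurring tool is a fourfold strengthening of Lemma~\ref{tripodlemma}: if $v$ has degree $4$ in $G$ and $x_1, x_2, x_3, x_4$ are interior points of the four edges at $v$, then any arc $\alpha$ containing them has $v$ as an interior point, uses exactly two of these edges as ``through-edges'' at $v$, and has its two endpoints in the interior of the remaining two edges. This follows from Lemma~\ref{tripodlemma} together with a counting of how often the injective map $\alpha$ can meet the single vertex $v$.

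For (1), if $\operatorname{deg}(v)\ge 5$, the same bookkeeping allows at most $2+2=4$ edges at $v$ to contain an $\alpha$-point in their interior, so five points placed on the interiors of five distinct edges at $v$ cannot all lie on a single arc. Condition (2) is immediate from Lemma~\ref{cutting_set}, together with the equivalence between (2) and its topological version (2${}'$) noted after the theorem statement.

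For (3), suppose $G$ is a cycle graph of non-trivial links $L_0, L_1, L_2$ with linking vertices $v \in L_0 \cap L_1$, $w \in L_0 \cap L_2$, $u \in L_1 \cap L_2$ and with $\operatorname{deg}_{L_0}(v) = \operatorname{deg}_{L_1}(v) = 2$. Place $x_1, x_2$ on the two $L_0$-edges at $v$, $x_3, x_4$ on the two $L_1$-edges at $v$, and $x_5$ on any interior edge-point of $L_2$. An arc $\alpha$ containing the first four points has two through-edges at $v$, lying either both in one of $L_0, L_1$ or split one each. In the first case, both half-arcs of $\alpha$ must travel from that link to the other, and the only such route avoiding $v$ passes through both $w$ and $u$; so both half-arcs would require these same two vertices, contradicting injectivity of $\alpha$. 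In the split case each half-arc is forced to remain in its starting link (any exit would require re-entering via $v$, $w$ or $u$, each used at most once), so $\alpha \subseteq L_0 \cup L_1$ misses $x_5$.

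For (4), assume $G = L_0 \cup L_1 \cup L_2$ has the stated structure with $\operatorname{deg}_{L_2}(v) = 2$. Connectedness of each $L_i$ (all containing both $v$ and $u$) yields $\operatorname{deg}_{L_0}(v), \operatorname{deg}_{L_1}(v) \geq 1$, and condition (1) then forces equality, so the four edges at $v$ split $2{+}1{+}1$ across $L_2, L_0, L_1$. Place $x_1, x_2$ on the $L_2$-edges at $v$, $x_3, x_4$ on the $L_0$- and $L_1$-edges at $v$, and $x_5$ on the interior of some edge of $L_2$ incident with $u$ (which exists by connectedness of $L_2$). The through-edge dichotomy yields three cases: if both through-edges lie in $L_2$, respectively one in $L_0$ and one in $L_1$, then both half-arcs of $\alpha$ must transit through $u$, a contradiction. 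In the remaining mixed case, one half-arc is trapped inside $L_2$ and is forced to avoid $u$ (since the other half-arc uses $u$ for its $L_0 \leftrightarrow L_1$ transit), while the transiting half-arc meets $u$ using only an $L_0$-edge and an $L_1$-edge; hence no edge of $L_2$ at $u$ lies on $\alpha$, so $x_5 \notin \alpha$. I expect this last verification---that the $L_2$-edges at $u$ are wholly absent from $\alpha$ in the mixed through-edge case---to be the most delicate step, and it is the reason to place $x_5$ near $u$ rather than elsewhere in $L_2$.
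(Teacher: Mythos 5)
Your proposal is correct and follows the same overall strategy as the paper: identical witness configurations of five points in each case, with Lemma~\ref{tripodlemma} pinning both endpoints of any candidate arc onto the edges at the high-degree vertex $v$. Where you diverge is in how the final contradiction is extracted for (3) and (4). The paper cuts the arc open along its visits to the linking vertices and argues that the truncated space has $v$ as a $4$-cut point whose components cannot be covered by the two remaining subarcs (a Lemma~\ref{cutting_set}-style count); you instead split $\alpha$ at $v$ into two half-arcs, classify the two ``through-edges'' at $v$, and track which linking vertices each half-arc is forced to traverse, deriving a contradiction with injectivity. Your route is arguably more transparent and avoids the arc surgery. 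Three small points deserve a sentence each if you write this up. First, in your fourfold tripod lemma the endpoints of $\alpha$ need only lie in $\closure{e_i} \setminus \Set{v}$ for the two non-through edges (they could sit at the far vertex of $e_i$ rather than in its interior); this does not affect any of your later uses. Second, in (4) you should justify that $L_2$ has an edge at $u$ not incident with $v$ on which to place $x_5$: if every $L_2$-edge at $u$ met $v$ then (in a simple graph) $\operatorname{deg}_{L_2}(u)=1$ and removing $v$ would disconnect the rest of $L_2$ from $G$, contradicting cyclic connectedness (the paper dismisses this with a ``without loss of generality''). Third, the assertion that ``no edge of $L_2$ at $u$ lies on $\alpha$'' in the mixed case of (4) is slightly too strong as stated; what you actually need, and what does follow, is that the trapped half-arc cannot contain $x_5$, since it avoids $u$, its endpoint lies on a different edge, and hence any incursion into the edge carrying $x_5$ would have to exit through $u$.
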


\begin{proof}
For (1), suppose that $v$ is a vertex of degree at least $5$, and $x_0, \ldots, x_4$ are chosen from the interior of distinct edges of $G$ incident with $v$. Suppose for a contradiction that there is an arc $\alpha$ in $G$ though all five points. Applying Lemma~\ref{tripodlemma} to $[v,x_0] \cup [v,x_1] \cup [v,x_2]$, we know that $v$ is an interior point of $\alpha$ and may assume that one  endpoint of $\alpha$ lies say on $(v,x_0]$. Next, applying Lemma~\ref{tripodlemma} with $[v,x_1] \cup [v,x_2] \cup [v,x_3]$ we may assume that the second endpoint of $\alpha$ lies say on $(v,x_1]$. But applying Lemma~\ref{tripodlemma} once again with $[v,x_2] \cup [v,x_3] \cup [v,x_4]$, we see that $\alpha$ is forced to have a third endpoint, a contradiction.

Condition (2) follows from Lemma~\ref{cutting_set}. 

For (3), suppose $G$ is a cycle graph of three (non-trivial) links $L_0,L_1,L_2$ with $L_0 \cap L_1 = \Set{v}$, $L_0 \cap L_2 = \Set{v_1}$ and $L_1 \cap L_2 = \Set{v_2}$ such that the linking vertex $v \in L_0 \cap L_1$ say has both $\operatorname{deg}_{L_0}(v) = 2 = \operatorname{deg}_{L_1}(v)$. Pick points $x_0,x_1$ on the interior of the distinct edges in $L_0$ incident with $v$, points $x_2,x_3$ on the interior of the distinct edges in $L_1$ incident with $v$, and $x_4$ on the interior of some edge in $L_2$. We claim that these five points witness that $G$ cannot be $5$-ac. To see this, observe first that Lemma~\ref{tripodlemma} implies that any potential arc $\alpha \colon [0,1] \to G$ containing $x_0, \ldots, x_4$ has to start and end inside $[v,x_0] \cup [v,x_1] \cup [v,x_2] \cup [v,x_3]$. In particular, $x_4$ lies on the interior of $\alpha$, and so also $v_1$ and $v_2$ lie on the interior of $\alpha$. Without loss of generality, let $ 0 < t_1 < t_2 < 1$ be the points where $\alpha(t_i) = v_1$ for $i=1,2$. Now following the arc $\alpha \restriction [0,t_1]$ backwards in time, we will first encounter say $x_0$ at time $0 \leq s_0 < t_1$. Similarly, following the arc $\alpha \restriction [t_2,1]$ forwards in time, we will first encounter say $x_2$ at time $t_2 < s_2 \leq 1$. Now, however, the points $x_0,\ldots,x_3$ are contained in the space $Y=G \setminus \alpha \restriction (s_0,s_2)$. But $v$ is a $4$-cut point of $Y$ with all $x_i$ contained in different components of $Y-v$. As in Lemma~\ref{cutting_set}, it follows that the set $\Set{x_1,\ldots,x_4}$ cannot be covered by the two disjoint arcs $\alpha \restriction [0,s_0]$ and $\alpha \restriction [s_2,1]$, a contradiction.

For (4), the argument is somewhat similar to the previous case. Pick points $x_0,x_1$ on the interior of the edges incident with $v$ in $L_0$ and $L_1$ respectively, points $x_2,x_3$ on the interior of the distinct edges in $L_2$ incident with $v$, and $x_4$ on the interior of some edge $e=ab$ in $L_2$ incident with $a$ (where, without loss of generality, we assume that $b \neq v$). We claim that these five points witness that $G$ cannot be $5$-ac. To see this, observe first that Lemma~\ref{tripodlemma} implies that any potential arc $\alpha \colon [0,1] \to G$ containing $x_0, \ldots, x_4$ has to start and end inside $[v,x_0] \cup [v,x_1] \cup [v,x_2] \cup [v,x_3]$. In particular, $x_4$ lies on the interior of $\alpha$, and so also $a$ and $b$ lie on the interior of $\alpha$, too. Without loss of generality, let $ 0 < t_1 < t_2 < 1$ be the points where $\alpha(t_i) = v_1$ for $i=1,2$. Now following the arc $\alpha \restriction [0,t_1]$ backwards in time, if $\alpha$ continues in $L_0$ or in $L_1$, we can argue similar to the previous case. If, however, $\alpha$ stays in $L_2$, then without loss of generality there are $s_2,s_3$ with $0 < s_2 < t_1 < t_2 < s_3 < 1$ with $\alpha(s_2) = x_2$ and $\alpha(s_3) = x_3$, and again we can argue that $v$ is a $4$-cut point of $Y=G \setminus \alpha \restriction (s_0,s_2)$ with all $x_i$ contained in different components of $Y-v$, and we get a contradiction as before.
\end{proof}

\begin{prop}
\label{prop_5acSuff}
Let $G$ be a cyclically connected graph such that
\begin{enumerate}
\item\label{assump1} $G$ has maximum degree $4$,
\item\label{assump2} no $3$ points of $|G|$ cut $|G|$ into $5$ or more components,
\item\label{assump3} $G$ is not a cycle graph of three (non-trivial) links $L_0,L_1,L_2$ such that the linking vertex $v \in L_0 \cap L_1$ has both $\operatorname{deg}_{L_0}(v) = 2 = \operatorname{deg}_{L_1}(v)$, and
\item\label{assump4} $G$ is not the union of three (edge-disjoint) connected subgraphs $L_0,L_1,L_2$ with two linking vertices $v,a$ such that $L_0 \cap L_1 = L_0 \cap L_2 = L_1 \cap L_2 = \Set{v,a}$ and $\operatorname{deg}_{L_2}(v) = 2$.
\end{enumerate}
Then $|G|$ is 5-ac.
\end{prop}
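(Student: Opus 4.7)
The plan is to extend the strategy of Theorem~\ref{cyclic_4ac}. By Lemma~\ref{lem_wlogpointsonedges} I may assume the five given points $x_0,\ldots,x_4$ lie on the interiors of edges $e_0,\ldots,e_4$. First, I would apply Theorem~\ref{cyclic_4ac} to find a basic $4$-ac subgraph $X\subseteq G$ covering $x_0,\ldots,x_3$, and catalogue, for each of the four homeomorphism types of $X$ (circle, $\theta$-curve, happy-face curve, baguette curve), the arcs of $X$ passing through these four points. By Lemma~\ref{basic_4-ac} and direct inspection there is considerable freedom in choosing the endpoints of such an arc; this freedom is the raw material for splicing in a detour through $x_4$.

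Next, writing $u,v$ for the endvertices of $e_4$ (the case $e_4\subseteq X$ being immediate), I would apply Menger's Theorem~\ref{menger}, or where needed the alternating-walk variant Theorem~\ref{thm_existencealternatingwalk}, to the vertex sets $\{u,v\}$ and $V(X)$ in $G$, with the goal of securing two internally disjoint $\{u,v\}$--$V(X)$ paths $P_1,P_2$ ending at distinct vertices $a_1,a_2\in V(X)$. Then $P_1\cup\closure{e_4}\cup P_2$ forms a detour which, combined with a suitable arc in $X$ chosen via Lemma~\ref{basic_4-ac} to have endpoints close to $a_1$ and $a_2$, produces an arc through $x_0,\ldots,x_4$. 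When three or more disjoint paths are available the splicing becomes routine. The remaining work is then a case analysis by the type of $X$ and by the entry pattern $(a_1,a_2)$ into $X$.

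The main obstacle will be the residual situations in which either no two disjoint $\{u,v\}$--$V(X)$ paths with distinct feet exist, or in which every such pair enters $X$ at an inconvenient position. This is precisely where the four numerical hypotheses are forced to do work: assumption~(\ref{assump1}) combined with Lemma~\ref{tripodlemma} restricts how an arc may traverse a vertex of degree $4$; assumption~(\ref{assump2}) with Lemma~\ref{cutting_set} prevents any three vertices from carving $|G|$ into too many components; and assumptions~(\ref{assump3}) and~(\ref{assump4}) rule out exactly the two minimal obstructive configurations flagged in Proposition~\ref{prop_5acNec}. I expect~(\ref{assump3}) to surface when $X$ is a $\theta$-curve and the only disjoint paths from $\{u,v\}$ into $X$ terminate at its two branch vertices, while~(\ref{assump4}) should surface when $X$ is a circle and Menger provides only a two-vertex separator between $e_4$ and $X$. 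The hardest step is not any individual arc construction but the bookkeeping: the case tree is indexed by the type of $X$, the size of a maximum Menger family, and the entry pattern of its paths, and every leaf must either be closed by an explicit arc or be excluded by one of~(\ref{assump3}) or~(\ref{assump4}).
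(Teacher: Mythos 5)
Your skeleton coincides with the paper's: reduce to interior points via Lemma~\ref{lem_wlogpointsonedges}, use condition~(\ref{assump2}) with Theorem~\ref{cyclic_4ac} to place $x_0,\ldots,x_3$ on a basic $4$-ac subgraph $X$, attach $x_4$ by two internally disjoint Menger paths, and case-split on the type of $X$ and the landing pattern. The gap is in how you propose to close the case tree. You assert that every leaf is either closed by an explicit arc inside $X \cup P_1 \cup P_2$ or is \emph{excluded} by hypothesis~(\ref{assump3}) or~(\ref{assump4}). Neither happens at the critical leaf: when $X$ is a $\theta$-curve with $x_0 \in e_0$, $x_1 \in e_1$, $x_2,x_3 \in e_2$ and the two attaching paths land at a branch vertex and at a point of $e_2$ beyond $x_3$, the union is a happy-face curve carrying one marked point on each of its five edges. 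That subgraph contains no arc through all five points, and yet the configuration is not forbidden by (\ref{assump1})--(\ref{assump4}) --- it occurs inside graphs that genuinely are $5$-ac. The hypotheses do not exclude it; they are instead used to dig further into $G$: condition~(\ref{assump2}) yields an additional arc $\delta$ joining two of the five edges, and in the surviving sub-configurations one must run an alternating-walk argument (Lemma~\ref{lem_existencealternatingwalk}) relative to the connecting edge inside $G$ minus the degree-$4$ vertex, together with a cut-vertex analysis in which (\ref{assump3}) forbids a second cut point and (\ref{assump4}) forces yet another arc when the cut vertex coincides with a branch vertex. This iterated augmentation is the bulk of the proof and is absent from your plan.

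Your predicted locations for the hypotheses are also off, which suggests the case tree as you envisage it would not terminate correctly. Condition~(\ref{assump4}) cannot arise when $X$ is a circle: two disjoint paths from $x_4$ to a circle immediately produce a $\theta$-curve through all five points, so that case needs no hypothesis at all. Likewise, the situation where the two attaching paths end at the two branch vertices of a $\theta$-curve is resolved by condition~(\ref{assump2}) (the two branch vertices cannot cut $G$ into four components, so an extra arc between two of the four parallel edges exists), not by~(\ref{assump3}). Both (\ref{assump3}) and (\ref{assump4}) are consumed only deep inside the $\theta$-curve case, after the happy-face configuration and the first auxiliary arc $\delta$ have appeared. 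Until the plan identifies that configuration and the multi-stage augmentation it requires, it cannot be completed.
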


\begin{proof}
Consider $x_0, \ldots, x_4 \in |G|$. By Lemma~\ref{lem_wlogpointsonedges}, to show that $G$ is $5$-ac, it suffices to consider points $x_i$ which lie on the interior of edges of $G$. Applying Lemma~\ref{lem_cutting_set} and Theorem~\ref{cyclic_4ac}, we see that condition~(\ref{assump2}) implies in particular that $x_0, \ldots, x_3$ lie on a basic $4$-ac space $X$, i.e. either on a cycle, a $\theta$-curve, a baguette- or a happy-face-curve. Using $2$-connectedness, we may connect $x_4$ to $X$ via to internally disjoint paths $\alpha_1,\alpha_2$, i.e.\ paths with $\alpha_i(0) = x_4$, $\alpha_i(1) \in X$, and $\alpha \restriction [0,1) \cap X = \emptyset$. If $X$ was a cycle, then all five points lie on a $\theta$-curve, so in particular they lie on a common arc, and we are done. Thus, only the three cases remain where $X$ is a $\theta$-curve, a baguette-curve, or a happy-face-curve. We now analyze each case separately.

\textbf{Case 1. $X$ a $\theta$-curve.} 

Write $a$ and $b$ for the two degree-3-vertices of the $\theta$-curve, and $e_0,e_1,e_2$ for the three edges of the $\theta$-curve. As we may suppose that no $4$ vertices of $\Set{x_0,\ldots,x_4}$ lie on a cycle, we may label the edges of our $\theta$-curve such that $x_0 \in e_0$, $x_1 \in e_1$ and $x_2,x_3 \in e_2$. Since vertices in $G$ have degree at most $4$, the following cases for how that arcs $\alpha_1$ and $\alpha_2$ connect up to $X$ can occur:
\begin{enumerate}[label=(\roman*)]
\item\label{subcasei} $\alpha_1(1) = a$ and $\alpha_2(1) = b$,
\item\label{subcaseii} $\alpha_1(1) = a$ and $\alpha_2(1) \in e_i$, or
\item\label{subcaseiii} both $\alpha_1$ and $\alpha_2$ hit $X$ on interior points of edges.
\end{enumerate}

In \ref{subcaseiii}, either $\alpha_1(1)$ and $\alpha_2(1)$ lie on the same edge $e_i \subseteq X$, in which case $Y=X \cup \alpha_1 \cup \alpha_2$ is a baguette-curve containing $\Set{x_0,\ldots,x_4}$, or, by symmetry, we may assume that $\alpha_1(1) \in (a,x_0) \subseteq e_0$ and $\alpha_2(1) \in e_1 \cup e_2$, in which case $Y=\p{X \setminus (a,\alpha_1(1))} \cup \alpha_1 \cup \alpha_2$ is a $\theta$-curve containing $\Set{x_0,\ldots,x_4}$. In both cases, our five points $x_0,\ldots,x_5$ lie on a $5$-ac subspace, and we are done. 

Next, we claim that--similar to the proof of Theorem~\ref{cyclic_4ac}--case \ref{subcasei} reduces to case \ref{subcaseii}. Indeed, suppose that $\alpha_1(1) = a$ and $\alpha_2(1) = b$. Then $Y=X \cup \alpha_1 \cup \alpha_2$ is a graph with vertices $a$ and $b$ and four parallel edges $e_0,\ldots, e_3$ with $x_0 \in e_0$, $x_1 \in e_1$, $x_2,x_3 \in e_2$ and $x_4 \in e_3$. By assumption~(\ref{assump2}) and Lemma~\ref{lem_cutting_set}, the points $a$ and $b$ do not cut $G$ into $4$ or more components, and hence there is an arc $\delta$ in $G$ between two different edges of $Y$. By symmetry, we may assume that $\delta(0) \in (a,x_0) \subseteq e_0$. But then $Y \setminus (a,\delta(0))$ is homeomorphic to a $\theta$-curve $X'=\Set{a,b} \cup e_1 \cup e_2 \cup e_3$ with the point $x_0$ joined to $X'$ via two arcs attaching to $\delta(1)$ and $b$, i.e.\ the configuration of subcase~\ref{subcaseii}.

Thus, it remains to work through case \ref{subcaseii}. By symmetry, the following possibilities can occur:
\begin{center}
\begin{tabular}{lll}
  $\alpha_2(1) \in (a,x_0) \subseteq e_0 \subseteq X$, & 
   $\alpha_2(1) \in (x_0,b) \subseteq e_0 \subseteq X$, & 
  $\alpha_2(1) \in (a,x_2) \subseteq e_2 \subseteq X$, \\
  $\alpha_2(1) \in (x_2,x_3) \subseteq e_2 \subseteq X$,   & \multicolumn{2}{l}{or \,  $\alpha_2(1) \in (x_3,b) \subseteq e_2 \subseteq X$.}  
\end{tabular}
\end{center}

\noindent \begin{minipage}{0.7\textwidth}
Write $Y = X \cup \alpha_1 \cup \alpha_2$. In the first and third case, $Y \setminus (a,\alpha_2(1))$ is a $\theta$-curve containing $\Set{x_0,\ldots,x_4}$, and in the second and fourth case, $Y \setminus (\alpha_2(1),b)$ is a figure-8-curve containing $\Set{x_0,\ldots,x_4}$. Thus, in the first four cases, our five points $x_0,\ldots,x_5$ lie on a common $\omega$-ac subspace, and we are done. In the fifth case, we see that $Y$ is a happy-face curve, i.e.\ a cycle graph consisting of two cycles and an arc with a point $x_i$ on every single edge. For convenience, let us relabel all edges and points of $Y$ as in the  picture.
\end{minipage}
\begin{minipage}{0.3\textwidth}
\centering
\begin{tikzpicture}[scale=1, use Hobby shortcut]

\coordinate [label=below:$a$] (a) at (-1.5, -1);
\node[circle, black, draw, inner sep=0.6pt, minimum width=4pt, fill=black] at (a) {};
\coordinate [label=below:$b$] (b) at (1.5, -1);
\node[circle, black, draw, inner sep=0.6pt, minimum width=4pt, fill=black] at (b) {};
\coordinate [label=above:$t$] (t) at (0,1.5);
\node[circle, black, draw, inner sep=0.6pt, minimum width=4pt, fill=black] at (t) {};

\coordinate [label={[red]left:$x_1$}] (x1) at (-0.75,1);
\coordinate (c1) at (-.3,0.5);
\coordinate [label={[red]left:$x_2$}] (x2) at (-.49,0.1);
\coordinate (c2) at (-.72,-0.3);
\coordinate (d1) at (.3,0.5);
\coordinate [label={[red]right:$x_3$}] (x3) at (.49,0.1);
\coordinate (d2) at (.72,-0.3);
\coordinate [label={[red]right:$x_4$}] (x4) at (0.75, 1);
\coordinate [label={[red]below:$x_5$}] (x5) at (0,-1);

\coordinate [label=left:$e_1$] (e1) at (-1.35,0);
\coordinate [label=left:$e_2$] (e2) at (-0.6,-.2);
\coordinate [label=right:$e_3$] (e3) at (0.6,-.2);
\coordinate [label=right:$e_4$] (e4) at (1.35,0);
\coordinate [label=below:$e_5$] (e5) at (0.9,-1);

\draw (a) .. (x1) .. (t);
\draw (a) .. (c2) .. (x2) .. (c1) .. (t);
\draw (b) .. (d2) .. (x3) .. (d1) .. (t);
\draw (b) .. (x4) .. (t);
\draw (a) .. (x5) .. (b);

\node[circle, red, draw, inner sep=0.6pt, minimum width=4pt, fill=red] at (x1) {};
\node[circle, red, draw, inner sep=0.6pt, minimum width=4pt, fill=red] at (x2) {};
\node[circle, red, draw, inner sep=0.6pt, minimum width=4pt, fill=red] at (x3) {};
\node[circle, red, draw, inner sep=0.6pt, minimum width=4pt, fill=red] at (x4) {};
\node[circle, red, draw, inner sep=0.6pt, minimum width=4pt, fill=red] at (x5) {};
\node[circle, white, draw, inner sep=0.6pt, minimum width=4pt, fill=white,yshift=-20pt] at (x5) {};
\end{tikzpicture}
\end{minipage}

By condition~~\ref{assump2}, the three points $a,b,t$ do not disconnect $G$ into $5$ or more components, and therefore there is an arc $\delta$ internally disjoint from $Y$ connecting some pair of edges $e_i$ and $e_j$ for $i \neq j$. Again we differentiate several subcases (up to symmetry) depending on the attaching points of $\delta$.
\begin{multicols}{2}
\begin{enumerate}[label=(\alph*)]
\item\label{case(a)} $\delta(0) \in (a,x_1)$ and $\delta(1) \in (a,x_2)$,
\item\label{case(b)} $\delta(0) \in (a,x_1)$ and $\delta(1) \in (x_2,t)$,
\item\label{case(c)} $\delta(0) \in (a,x_2)$ and $\delta(1) \in (x_3,t)$,
\item\label{case(d)} $\delta(0) \in (a,x_2)$ and $\delta(1) \in (b,x_3)$,
\item\label{case(e)} $\delta(0) \in (x_2,t)$ and $\delta(1) \in (x_3,t)$,
\item\label{case(f)} $\delta(0) \in (a,x_2)$ and $\delta(1) \in (a,x_5)$,
\item\label{case(g)} $\delta(0) \in (a,x_2)$ and $\delta(1) \in (x_5,b)$,
\item\label{case(h)} $\delta(0) \in (x_2,t)$ and $\delta(1) \in (a,x_5)$,
\item\label{case(i)} $\delta(0) \in (x_2,t)$ and $\delta(1) \in (x_5,b)$.
\end{enumerate}
\end{multicols}
Note in case \ref{case(a)}, for example, we additionally know that $(a,x_1) \subseteq e_1$ and $(a,x_2) \subseteq e_2)$, and similarly for all the cases. 
Write $Z = Y \cup \delta$. Now in \ref{case(b)} and \ref{case(c)}, $\Set{x_0,\ldots,x_4}$ lie on the common $\theta$-curve $Z \setminus \p{(a,\delta(0)) \cup (\delta(1),t)}$ respectively. In \ref{case(d)}, $Z \setminus \p{(a,\delta(0)) \cup (b,\delta(1))}$ is a figure-8-curve containing $\Set{x_0,\ldots,x_4}$. In \ref{case(e)}, $Z \setminus \p{(\delta(0),t) \cup (\delta(1),t)}$ is a figure-8-curve containing $\Set{x_0,\ldots,x_4}$. In \ref{case(g)}, $\Set{x_0,\ldots,x_4}$ lie on the common figure-8-curve $Z \setminus \p{(a,\delta(0)) \cup (\delta(1),b)}$. In \ref{case(h)}, $\Set{x_0,\ldots,x_4}$ lie on the common $\theta$-curve $Z \setminus \p{(\delta(0),t) \cup (a,\delta(1))}$. And in \ref{case(i)}, $Z \setminus \p{(\delta(0),t) \cup (\delta(1),b)}$ is a dumbbell containing $\Set{x_0,\ldots,x_4}$.

Thus, it remains to check cases \ref{case(a)} and \ref{case(f)}. Note first these cases are isomorphic (after relabeling $a:=\delta(1)$ in \ref{case(f)}, and so forth). So without loss of generality, we may assume we are in case \ref{case(a)}. Note that by assumption~\ref{assump3}, there must exist some additional arcs connecting different parts of the subgraph.
Let us work in the (connected) space $G'=|G| \setminus \singleton{t}$. First, assume there is no cut-vertex separating $A=e_1 \cup \{a\} \cup e_2$ from $B=e_3 \cup \{b\} \cup e_4$ in $G'$ (in particular, we assume $b$ is not such a cut-vertex). Then by Menger, there exists an $A-B$ walk $\beta$ in $G-t$ which is alternating with respect to $\closure{e_5}$ such that $a,b \notin \beta$.

\textbf{Claim 1:} \emph{If $\beta \cap (x_5,b) \neq \emptyset$, then we are done.} 

To see the claim, let $t \in (0,1)$ be  minimal such that $\beta(t) \in e_5$. Since we have excluded 
\ref{case(h)} and \ref{case(i)} above, we may assume that $\beta(0) \in (a,x_2) \subseteq e_2$. Next, let $t' \in (0,1)$ be minimal such that $x=\beta(t') \in (x_5,b)$, and consider the arc $\beta ' = \beta \restriction [0,t']$. Then $\beta'$ is an $A-x$ walk disjoint from $B$ and alternating with respect to $[a,x] \subseteq \closure{e_5}$ such that $x_5 \notin \beta'$ (this follows from the definition of `alternating'), and so we find two independent $A-x$-paths $\gamma_1$ and $\gamma_2$ in the symmetric difference of $\beta'$ and $[a,x]$ with starting vertices $a$ and $\beta(0)$ respectively such that $x_5 \in \gamma_i$ for precisely one $i$, see Lemma~\ref{lem_existencealternatingwalk}. But then $[Y \setminus \p{(a,\beta(0)) \cup e_5 }] \cup \gamma_1 \cup \gamma_2$ 
is a figure-8-curve containing $\Set{x_0,\ldots,x_4}$. 

\textbf{Claim 2:} \emph{If $\beta \cap (x_5,b) =\emptyset$, then we are also done.} 

To see this, note that $\beta \cap (x_5,b) =\emptyset$ implies, by the definition of alternating, that $x_5 \notin \beta$. Therefore, by taking the symmetric sum of $\closure{e_5}$ and $\beta$, we obtain two disjoint $A-x$-paths $\gamma_1$ and $\gamma_2$ with starting vertices $a$ and $\beta(0)$ respectively such that $x \in \gamma_i$ for precisely one $i$ (if we choose the walk $\beta$ according to the moreover-part of Lemma~\ref{lem_existencealternatingwalk}). Next, since we have excluded cases \ref{case(c)}, \ref{case(d)} and \ref{case(e)} above, we may assume that $\beta \cap (a,x_5) \neq \emptyset$, and since we have excluded cases \ref{case(h)} and \ref{case(i)} above, we may further assume that $\beta(0) \in (a,x_2) \subseteq e_2$. Thus, up to symmetry, the following four arrangements can occur:
\begin{multicols}{2}
\begin{enumerate}
\item $\gamma_1(1) = b$, $\gamma_2(1) \in (b,x_3) \subseteq e_3$,
\item $\gamma_1(1) = b$,  $\gamma_2(1) \in (x_3,t) \subseteq e_3$,
\item $\gamma_2(1) = b$, $\gamma_1(1) \in (b,x_3) \subseteq e_3$, 
\item $\gamma_2(1) = b$,  $\gamma_1(1) \in (x_3,t) \subseteq e_3$.
\end{enumerate}
\end{multicols}
In the first case, $\p{Y \setminus [e_5 \cup (a,\gamma_2(0)) \cup (b,\gamma_2(1))]} \cup \gamma_1 \cup \gamma_2$ is a figure-8-curve containing $\Set{x_0,\ldots,x_4}$. In the second case,  $\p{Y \setminus [e_5 \cup (a,\gamma_2(0)) \cup (\gamma_2(1),t)]} \cup \gamma_1 \cup \gamma_2$ is a $\theta$-curve containing $\Set{x_0,\ldots,x_4}$. In the third case,  $\p{Y \setminus [e_5 \cup (a,\gamma_2(0)) \cup (b,\gamma_1(1))]} \cup \gamma_1 \cup \gamma_2$ is a figure-8-curve containing $\Set{x_0,\ldots,x_4}$. And in the last case, the subgraph $\p{Y \setminus [e_5 \cup (a,\gamma_2(0)) \cup (\gamma_1(1),t)]} \cup \gamma_1 \cup \gamma_2$ is a $\theta$-curve containing $\Set{x_0,\ldots,x_4}$.

This completes the case checks for when there was no cut-vertex between $A$ and $B$ in $G'$. So now, we may assume that some vertex $v \in e_5 \cup \singleton{b}$ is a cut-vertex of $G'$. Without loss of generality, $v$ is chosen as close to $a$ on $\closure{e_5}$ as possible.

\textbf{Claim 3:} \emph{If $v \in (a,x_5)$, then we are done.} 

Indeed, the existence of a further cut point $v'$ separating $x_5$ from $B$ in $G-\{t,v\}$ would contradict condition~(\ref{assump3}). Therefore, by Menger (cf.\ Corollary~\ref{thm_existencealternatingwalk}), there exists an $x_5-B$ walk $\beta$ in $G-\{t,v\}$ which is alternating with respect to $[x_5,b] \subseteq e_5$. Write $z$ for the endpoint of $\beta$ on $e_3 \cup e_4$. By the excluded cases \ref{case(h)} and \ref{case(i)} above, we may assume that $\beta(1) \in (b,x_3) \subseteq e_3$. By taking the symmetric difference of $[x_5,b]$ and $\beta$, we see as in Claim 1 above that our set $\Set{x_0,\ldots,x_4}$ lies on a figure-8-curve. 

\textbf{Claim 4:} \emph{If $v \in (x_5,b)$, then then we are also done.}  
This case follows as in Claim 1 (using the fact that $v$ was chosen left-most).

\textbf{Claim 5:} \emph{Can deal with the case $v =b $.} 

Again, since $v$ was chosen left-most, our paths $\beta$ and $\closure{e_5}$ witness that $b$ has degree $2$ in $L_2:=G \setminus \p{e_3 \cup e_4}$. 
Now at this point, $\Set{t,b,w}$ would give rise to a decomposition of $G$ into 
\[L_0 = \closure{e_3}, \; L_1 = \closure{e_4}, \; \text{ and } \; L_2 = G \setminus \p{e_3 \cup e_4},\]
contradicting condition~(\ref{assump4}). Therefore, since $v = b$ was assumed to be a cut-vertex, we are forced to conclude there there must be an additional arc $\delta'$ between $e_3$ and $e_4$. As we have excluded \ref{case(b)} and \ref{case(c)} above, we may assume that $\delta'(0) \in (b,x_3) \subseteq e_3$ and $\delta'(1) \in (b,x_4) \subseteq e_4$. 

Finally, since we have dealt with Claim 1 already, we may assume that $x_5 \notin \beta$. Taking the symmetric difference of $\beta$ and $\closure{e_5}$ gives us two disjoint $A-b$ paths $\gamma_1$ and $\gamma_2$ such that $x \in \gamma_i$ for precisely one $i$ (again assuming that we choose the walk $\beta$ according to the moreover part of Lemma~\ref{lem_existencealternatingwalk}), with say $\gamma_1(0) = a$ and $\gamma_2(0) \in (a,x_2)$. But then $[Y \setminus \p{e_5 \cup (a,\gamma_2(0)) \cup (b,\delta'(0))\cup (b,\delta'(1))}] \cup \gamma_1 \cup \gamma_2$ is a figure-8-curve containing $\Set{x_0,\ldots,x_4}$, and we are done.

\textbf{Case 2. $X$ a baguette-curve.}

This case is fairly easy in comparison. If $X$ is a baguette curve with cycles $C_1,C_2$, degree-3-vertices $a,b \in C_1$ and $c,d \in C_2$ and edges $\Set{e_0,e_1}= E(C_1)$, $\Set{e_2,e_3} = E( C_2)$ and $e_4=ac,e_5=bd$ between $C_1$ and $C_2$, we may assume that $x_i \in e_i$ for $0 \leq i \leq 3$, as otherwise we are back in the $\theta$-curve case. Now consider where the arcs $\alpha_1$ and $\alpha_2$ attaching $x_4$ hit $X$. Note first that if say $\alpha_i(1) \in C_i$, then $\Set{x_0,\ldots,x_4}$ lie on a common dumbbell, and we are done. Thus, up to symmetry, the following cases remain:
\begin{multicols}{2}
\begin{enumerate}[label=(\alph*)]
\item\label{case0000} $\alpha_1(1) \in e_3 \cup e_4$,
\item\label{blaA} $\alpha_1(1) \in e_4$, $\alpha_2(1) = c$,
\item\label{blaC} $\alpha_1(1) \in e_4$, $\alpha_2(1) \in e_5 \cup \singleton{d}$, or
\item\label{blaD} $\alpha_1(1) =c$, $\alpha_2(1) =d$.
\end{enumerate}
\end{multicols}
In case~\ref{case0000}, we may assume by symmetry that $\alpha_1(1) \in (c,x_3)$. Then $\p{X \cup \alpha_1} \setminus \p{(c,\alpha_1(1) \cup e_5}$ is a lollipop containing $\Set{x_0,\ldots,x_4}$. Next, let $Y = X \cup \alpha_1 \cup \alpha_2$. In case~\ref{blaA}, $Y \setminus (\alpha_1(1),\alpha_2(1))$ is a baguette-curve containing $\Set{x_0,\ldots,x_4}$. In case~\ref{blaC}, $Y \setminus (\alpha_1(1),c) \cup (b,\alpha_2(1))$ is a lollipop containing $\Set{x_0,\ldots,x_4}$. Thus, it remains to analyze case~\ref{blaD} more closely. In this case, the subgraph $Y=X \cup \alpha_1 \cup \alpha_2$ is the baguette curve $X$ with an extra edge $e_6$ with endpoints $c$ and $d$. In particular, removing the vertices $c$ and $d$ from $|Y|$ would leave $4$ connected components.

Thus, using the condition that no two vertices split $|G|$ into $4$ different components (by condition~(\ref{assump2}) and Lemma~\ref{lem_cutting_set}), we know that there must be a further arc $\delta$ internally disjoint from $Y$ and connecting different components of $|Y| \setminus \{a,b\}$. Up to symmetry (as there is no structural difference between $e_2,e_3$ and $e_6$), we may assume that $\delta(0) \in (x_2,d) \subseteq e_2$. Then for the other endpoint of $\delta$, the following cases can occur:
\begin{multicols}{3}
\centering
\begin{enumerate}[label=(\roman*)]
\item\label{baguette3} $\delta(1) \in e_4$,

\item\label{baguette4} $\delta(1) \in e_5$,
\item\label{baguette5} $\delta(1) =a$,
\item\label{baguette6} $\delta(1) =b$,

\item\label{baguette1} $\delta(1) \in (x_3,d) \subseteq e_3$,
\item\label{baguette2} $\delta(1) \in (c,x_3) \subseteq e_3$,
\item\label{baguette7} $\delta(1) \in (a,x_0) \subseteq e_0$, 
\item\label{baguette8} $\delta(1) \in (x_0,b) \subseteq e_0$.
\end{enumerate}
\end{multicols}
In all cases, it is straightforward to see to verify that our five points $x_0, \ldots, x_4$ lie on a common dumbbell. 
This completes the proof of Case 2.

\textbf{Case 3. $X$ a happy-face-curve.}

Again, this case is fairly easy in comparison. If $X$ is a happy face curve with cycles $C_1,C_2$, degree-4 vertex $a \in C_1 \cap C_2$, degree-3-vertices $b \in C_1 \setminus C_2$ and $c \in C_2 \setminus C_1$ and edges $\Set{e_0,e_1} =  E(C_1)$, $\Set{e_2,e_3} = E(C_2)$ and $e_4=bc$, we may assume that $x_i \in e_i$ for $0 \leq i \leq 3$, as otherwise we are back in the $\theta$-curve case. Now consider where the arcs $\alpha_1$ and $\alpha_2$ attaching $x_4$ hit $X$. Note that the $\alpha_i$ cannot hit on any $x_j$ (as they were chosen to lie on the interior of edges of $G$), nor on the center vertex $a$, by condition~(\ref{assump1}).

If $\alpha_1$ and $\alpha_2$ hit the same segment of $C_i \setminus \{a,x_j,x_k\}$, then ignoring the edge $e_4$, we see that all our 5 points lie on a figure-8-curve.

Next, if $\alpha_1$ hits $C_1$ say, and $\alpha_2$ doesn't, then it's easy to see that we are back in the discussion as in Case 1, where all our five points lie on the different edges of a happy face curve, so we are done, as we have solved this arrangement already.

Lastly, we assume that $\alpha_1$ and $\alpha_2$ hit different segments of say $C_1 \setminus \{a,x_0,x_1\}$. Let us view $C_1$ as a cycle $aex_0fx_1ga$ with vertices $a,x_0,x_1$ and three edges. After removing the edge $e_5$, we see that up to symmetry, the following three cases can occur: 
(i) $\alpha_1(1) \in e$ and $\alpha_2(1) \in f$, \
(ii) $\alpha_1(1) \in e$ and $\alpha_2(1) \in g$, or \ 
(iii) $\alpha_1(1) \in f$ and $\alpha_2(1) \in g$. 
In all three cases, we see that $X \setminus \p{e_5 \cup (a,\alpha_1(1))}$ is a dumbbell containing our five points $x_0, \ldots, x_4$.
This completes the proof.
\end{proof}

\subsection{Characterizing \texorpdfstring{$6$}{6}-ac graphs}

Our characterization of $6$-ac graphs, the main result of this section, is as follows. 

\begin{theorem} 
\label{thm_6acChar}
A graph $G$ is $6$-ac if and only if either $G$ is one of the nine $7$-ac graphs of Theorem~\ref{7ac1complex} or, after suppressing all degree-2-vertices, the graph $G$ is $3$-regular, $3$-connected, and removing any $6$ edges does not disconnect $G$ into 4 or more components.\footnote{Equivalently, if $G$ is $3$-regular, $3$-connected and not an \emph{inflated $K_4$}: there is no partition of $V(G)$ into four non-empty subsets $V_1,\ldots,V_4$ such that each $G_i=G[V_i]$ is connected and there is precisely one $G_i-G_j$ edge in $G$ for every pair $i \neq j$.}
\end{theorem}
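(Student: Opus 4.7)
I would structure the proof as necessity plus sufficiency. For necessity, assume $|G|$ is $6$-ac and $G$ is not one of the nine $7$-ac graphs of Theorem~\ref{7ac1complex}. Then $|G|$ is $5$-ac, and since every non-cyclically-connected $5$-ac graph listed in Theorem~\ref{chain_5-ac} is among the nine $7$-ac graphs, $G$ is cyclically connected. In particular the structural conclusions of Theorem~\ref{thm_5acChar} apply, giving max degree $\leq 4$ and no $3$-vertex cut into $5$ or more components.

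Three further claims would then establish the remaining necessary conditions. First, no non-degree-$2$ vertex has degree $\geq 4$: if $v$ has degree $\geq 4$, take four points $x_1,\ldots,x_4$ on four distinct edges incident with $v$ and close to $v$; repeated applications of Lemma~\ref{tripodlemma} force both endpoints of any arc through these four points to lie in $\bigcup_i [v,x_i]$, and picking two further points $x_5,x_6$ on edges far from $v$ and on appropriately chosen sides of a suitable edge-cut (via cyclic connectedness and Lemma~\ref{lem_oddevencuts}) produces a contradiction. Second, $G$ is $3$-connected: if $\{u,w\}$ separates $|G|$ into $\geq 4$ components, Lemma~\ref{cutting_set} already yields failure of $6$-ac, while for $2$ or $3$ components one spreads six points so that an arc would need more than two transitions through $\{u,w\}$. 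Third, no $6$-edge cut disconnects $G$ into $4$ or more components: if $F=\{e_1,\ldots,e_6\}$ is such a cut with components $A_1,\ldots,A_k$ and $k\geq 4$, placing $x_i\in\mathrm{int}(e_i)$ forces any arc through $x_1,\ldots,x_6$ to induce an Eulerian trail in the cut-multigraph on $\{A_1,\ldots,A_k\}$; $3$-regularity of the suppressed graph forces $k=4$ and the cut-multigraph to be $K_4$, which has four odd-degree vertices and hence admits no Eulerian trail, contradicting $6$-ac.

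For sufficiency, fix six points $x_1,\ldots,x_6\in|G|$, which by Lemma~\ref{lem_wlogpointsonedges} may be taken in edge interiors. I would apply Proposition~\ref{prop_5acSuff} to $x_1,\ldots,x_5$ to obtain an arc through these five points contained in a basic $5$-ac subspace $Y\subseteq|G|$ of known topological type (subdivided $\theta$-curve, figure-8, dumbbell, baguette- or happy-face-curve). By $2$-connectivity and Menger's Theorem~\ref{menger}, there are two internally disjoint arcs $\alpha_1,\alpha_2$ from $x_6$ to $Y$. A case analysis on how $\alpha_1,\alpha_2$ attach to $Y$, mirroring the case analysis of Proposition~\ref{prop_5acSuff}, shows that in most configurations the union $Y\cup\alpha_1\cup\alpha_2$ already contains a $6$-ac subspace (e.g.\ a dumbbell, or a $\theta$-curve with an extra handle) through $x_1,\ldots,x_6$ after pruning redundant arcs. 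In the residual stuck configurations $3$-regularity constrains the further attachment options at each existing vertex, and $3$-connectivity combined with Theorem~\ref{thm_existencealternatingwalk} furnishes an additional alternating-walk path $\delta$ in $G$ that reroutes the structure. The no-$6$-edge-cut-into-$4$-components hypothesis is invoked precisely when every stuck configuration would otherwise amount to an inflated-$K_4$ in $G$, which is ruled out by hypothesis.

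The main obstacle will be the scale of the sufficiency case analysis: each of the five types of basic $5$-ac subspace produces multiple subcases for the attachment of $\alpha_1,\alpha_2$, each subcase may further branch when alternating-walk re-routing is required, and each final configuration must be individually verified to contain a $6$-ac witness subspace. Most delicate will be ensuring that the alternating walk furnished by Theorem~\ref{thm_existencealternatingwalk} yields not merely additional disjoint paths but paths with the exact topological interaction needed to produce the desired witness subspace—a direct amplification of Claims 1--5 in the proof of Proposition~\ref{prop_5acSuff}, and the step at which the combinatorial $K_4$-avoidance hypothesis finally pins down the structure.
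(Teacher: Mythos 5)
Your necessity outline tracks the paper's Proposition~\ref{prop_6acNec} closely: cyclic connectedness via Theorem~\ref{chain_5-ac}, the degree and connectivity bounds via Lemmas~\ref{tripodlemma} and~\ref{lem_oddevencuts}, and the $6$-edge-cut condition via the forced $K_4$ arrangement of four components. Your Eulerian-trail phrasing of that last step (an arc meeting one interior point of each of the six cut edges would induce a trail in the contracted cut-multigraph $K_4$, which has four odd vertices) is a cleaner packaging than the paper's explicit three-point-separator contradiction; just note that the double counting forcing $k=4$ and the exactly-one-edge-per-pair structure rests on the already-established $3$-connectedness, not on $3$-regularity alone, and that your degree-$\geq 4$ argument (four points at $v$ plus two points across ``a suitable edge-cut'') is left at the level of intent where the paper instead exploits a second branch point $u$ and applies the tripod lemma at both $v$ and $u$.

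The genuine gap is in sufficiency. You propose to run Proposition~\ref{prop_5acSuff} on $x_1,\ldots,x_5$ and then attach $x_6$ to ``a basic $5$-ac subspace $Y$ of known topological type.'' But Proposition~\ref{prop_5acSuff} only asserts the existence of a common arc; the witnessing subspaces extracted from its proof include the lollipop, the figure-$8$ and the dumbbell, which have cut points, and attaching a sixth point to such a $Y$ by two independent arcs does not in general produce a subspace carrying an arc through all six points --- you would be forced to re-upgrade $Y$ first. The paper sidesteps this entirely with the decisive intermediate result (Lemma~\ref{lem_5ac-converse}, built on its four-point companion): in a $3$-regular, $3$-connected graph \emph{any five points already lie on a circle or a $\theta$-curve}. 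That lemma is not free --- it needs Menger's Theorem~\ref{menger} and a rerouting argument to convert a baguette curve into a $\theta$-curve, with its own subcase analysis --- and nothing in your outline supplies it. Once it is in hand, the sufficiency proof collapses to exactly two starting configurations for five points on a $\theta$-curve (the $3$--$1$--$1$ and $2$--$2$--$1$ distributions of points over its edges), and only the second, which produces the six points sitting on the six edges of a subdivided $K_4$, invokes the no-$6$-edge-cut hypothesis together with the alternating-walk machinery of Theorem~\ref{thm_existencealternatingwalk}. Without this reduction your ``residual stuck configurations'' are not a finite controlled list, and the assertion that $3$-regularity plus an alternating walk always resolves them is precisely the content still to be proved.
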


Note that the last condition in particular implies that $G$ must be triangle-free. However, the stronger condition we chose is necessary for the characterization, as demonstrated by the following $3$-regular $3$-connected, triangle-free graphs, which both fail to be $6$-ac (in both cases consider the six points labeled \tikz[baseline=-0.75ex]{\node[diamond, draw, red, inner sep=0.6pt, minimum width=4pt, fill=red]  {};}).

\begin{minipage}{.49\textwidth}
\begin{center} 
\begin{tikzpicture}[scale=1, use Hobby shortcut]
\node[circle, draw, inner sep=0.6pt, minimum width=4pt, fill=black]  (a) at (0,0) {};

\node[circle, draw, inner sep=0.6pt, minimum width=4pt, fill=black]  (b) at (90:1.5cm) {};

\node[circle, draw, inner sep=0.6pt, minimum width=4pt, fill=black]  (c) at (210:1.5cm) {};

\node[circle, draw, inner sep=0.6pt, minimum width=4pt, fill=black]  (d) at (330:1.5cm) {};

\draw (0,0) circle (1.5cm);
\draw (a) -- (b);
\draw (a) -- (c);
\draw (a) -- (d);
\node[diamond, red, draw, inner sep=0.6pt, minimum width=4pt, fill=red]  (x1) at (-210:1.5cm) {};

\node[diamond, red, draw, inner sep=0.6pt, minimum width=4pt, fill=red]  (x3) at (-330:1.5cm) {};

\node[diamond, red, draw, inner sep=0.6pt, minimum width=4pt, fill=red]  (x2) at (-90:1.5cm) {};

\node[diamond, draw,red, inner sep=0.6pt, minimum width=4pt, fill=red]  (x4) at (90:.75cm) {};

\node[diamond, draw, red, inner sep=0.6pt, minimum width=4pt, fill=red]  (x5) at (210:.75cm) {};

\node[diamond, draw, red, inner sep=0.6pt, minimum width=4pt, fill=red]  (x6) at (330:.75cm) {};

\coordinate [label={[blue]right:$w$}] (p1) at (165:1.5cm);
\node[circle, blue, draw, inner sep=0.6pt, minimum width=4pt, fill=blue] at (p1) {};
\coordinate (p2) at (180:1.5cm);
\node[circle, blue, draw, inner sep=0.6pt, minimum width=4pt, fill=blue]  at (p2) {};
\coordinate (p3) at (195:1.5cm);
\node[circle, blue, draw, inner sep=0.6pt, minimum width=4pt, fill=blue]  at (p3) {};
\coordinate [label={[blue]right:$v$}] (p4) at (225:1.5cm);
\node[circle, blue, draw, inner sep=0.6pt, minimum width=4pt, fill=blue]  at (p4) {};
\coordinate (p5) at (190:3cm);
\node[circle, blue, draw, inner sep=0.6pt, minimum width=4pt, fill=blue]  at (p5) {};
\coordinate (p6) at (200:3cm);
\node[circle, blue, draw, inner sep=0.6pt, minimum width=4pt, fill=blue]  at (p6) {};

\draw[blue] (p1) .. (p5) .. (p6) .. (p4);
\draw[blue, out angle=180, in angle=20] (p2) .. (p6);
\draw[blue, out angle=195, in angle=10] (p3) .. (p5);

\coordinate (q1) at (45:1.5cm);
\node[circle, blue, draw, inner sep=0.6pt, minimum width=4pt, fill=blue]  at (q1) {};
\coordinate (q2) at (60:1.5cm);
\node[circle, blue, draw, inner sep=0.6pt, minimum width=4pt, fill=blue]  at (q2) {};
\coordinate (q3) at (75:1.5cm);
\node[circle, blue, draw, inner sep=0.6pt, minimum width=4pt, fill=blue]  at (q3) {};
\coordinate (q4) at (105:1.5cm);
\node[circle, blue, draw, inner sep=0.6pt, minimum width=4pt, fill=blue]  at (q4) {};
\coordinate (q5) at (70:3cm);
\node[circle, blue, draw, inner sep=0.6pt, minimum width=4pt, fill=blue]  at (q5) {};
\coordinate (q6) at (80:3cm);
\node[circle, blue, draw, inner sep=0.6pt, minimum width=4pt, fill=blue]  at (q6) {};

\draw[blue] (q1) .. (q5) .. (q6) .. (q4);
\draw[blue, out angle=60, in angle=-100] (q2) .. (q6);
\draw[blue, out angle=75, in angle=-90] (q3) .. (q5);

\end{tikzpicture}
\end{center} 
\end{minipage} 
\begin{minipage}{.49\textwidth}
\begin{center} 
\begin{tikzpicture}[scale=1, use Hobby shortcut]
\node[circle, draw, inner sep=0.6pt, minimum width=4pt, fill=black]  (a) at (0,0) {};

\node[circle, draw, inner sep=0.6pt, minimum width=4pt, fill=black]  (b) at (90:1.5cm) {};

\node[circle, draw, inner sep=0.6pt, minimum width=4pt, fill=black]  (c) at (210:1.5cm) {};

\node[circle, draw, inner sep=0.6pt, minimum width=4pt, fill=black]  (d) at (330:1.5cm) {};

\draw (0,0) circle (1.5cm);
\draw (a) -- (b);
\draw (a) -- (c);
\draw (a) -- (d);
\node[diamond, red, draw, inner sep=0.6pt, minimum width=4pt, fill=red]  (x1) at (-210:1.5cm) {};

\node[diamond, red, draw, inner sep=0.6pt, minimum width=4pt, fill=red]  (x3) at (-330:1.5cm) {};

\node[diamond, red, draw, inner sep=0.6pt, minimum width=4pt, fill=red]  (x2) at (-90:1.5cm) {};

\node[diamond, draw,red, inner sep=0.6pt, minimum width=4pt, fill=red]  (x4) at (90:.75cm) {};

\node[diamond, draw, red, inner sep=0.6pt, minimum width=4pt, fill=red]  (x5) at (210:.75cm) {};

\node[diamond, draw, red, inner sep=0.6pt, minimum width=4pt, fill=red]  (x6) at (330:.75cm) {};

\coordinate [label={[blue]right:$w$}] (p1) at (180:1.5cm);
\node[circle, blue, draw, inner sep=0.6pt, minimum width=4pt, fill=blue] at (p1) {};
\coordinate (p2) at (195:1.5cm);
\node[circle, blue, draw, inner sep=0.6pt, minimum width=4pt, fill=blue]  at (p2) {};
\coordinate (p3) at (225:1.5cm);
\node[circle, blue, draw, inner sep=0.6pt, minimum width=4pt, fill=blue]  at (p3) {};
\coordinate [label={[blue]right:$v$}] (p4) at (240:1.5cm);
\node[circle, blue, draw, inner sep=0.6pt, minimum width=4pt, fill=blue]  at (p4) {};
\coordinate (p5) at (205:3cm);
\node[circle, blue, draw, inner sep=0.6pt, minimum width=4pt, fill=blue]  at (p5) {};
\coordinate (p6) at (215:3cm);
\node[circle, blue, draw, inner sep=0.6pt, minimum width=4pt, fill=blue]  at (p6) {};

\draw[blue] (p1) .. (p5) .. (p6) .. (p4);
\draw[blue, out angle=195, in angle=35] (p2) .. (p6);
\draw[blue, out angle=225, in angle=25] (p3) .. (p5);

\coordinate (q1) at (60:1.5cm);
\node[circle, blue, draw, inner sep=0.6pt, minimum width=4pt, fill=blue]  at (q1) {};
\coordinate (q2) at (75:1.5cm);
\node[circle, blue, draw, inner sep=0.6pt, minimum width=4pt, fill=blue]  at (q2) {};
\coordinate (q3) at (105:1.5cm);
\node[circle, blue, draw, inner sep=0.6pt, minimum width=4pt, fill=blue]  at (q3) {};
\coordinate (q4) at (120:1.5cm);
\node[circle, blue, draw, inner sep=0.6pt, minimum width=4pt, fill=blue]  at (q4) {};
\coordinate (q5) at (85:3cm);
\node[circle, blue, draw, inner sep=0.6pt, minimum width=4pt, fill=blue]  at (q5) {};
\coordinate (q6) at (95:3cm);
\node[circle, blue, draw, inner sep=0.6pt, minimum width=4pt, fill=blue]  at (q6) {};

\draw[blue] (q1) .. (q5) .. (q6) .. (q4);
\draw[blue, out angle=75, in angle=-95] (q2) .. (q6);
\draw[blue, out angle=105, in angle=-85] (q3) .. (q5);
\end{tikzpicture}
\end{center} 
\end{minipage}

We split the proof of Theorem~\ref{thm_6acChar} into two parts. First, in Proposition~\ref{prop_6acNec}, we will show that the three conditions mentioned in the characterization are necessary. In Proposition~\ref{prop_6acSuff} further below, we will then show the converse direction.

\begin{prop}
\label{prop_6acNec}
Let $G$ be a $6$-ac graph which different from the nine $7$-ac graphs. Then $G$ is $3$-regular, 
$3$-connected, and removing any 6 edges does not disconnect $G$ into 4 or more components.
\end{prop}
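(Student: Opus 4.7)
The plan is to combine the $5$-ac characterization of Theorem~\ref{thm_5acChar} with the finer arc-tracking tools from Section~\ref{sec_1}. First I observe that any $6$-ac graph is $5$-ac, and by Theorem~\ref{chain_5-ac} every $5$-ac graph failing to be cyclically connected lies among the nine $7$-ac graphs of Theorem~\ref{7ac1complex}. So under our assumption, $G$ is cyclically connected, hence $2$-connected. Next, since suppressing degree-$2$ vertices preserves $|G|$ up to homeomorphism and hence the $n$-ac property, we may assume $\delta(G)\geq 3$; together with Theorem~\ref{thm_5acChar}(1) this yields $3\leq\delta(G)\leq\Delta(G)\leq 4$.

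\textbf{Three-regularity.} To exclude degree-$4$ vertices, I would suppose for contradiction that $v\in V(G)$ has degree~$4$ with incident edges $e_1,\dots,e_4$ and far endpoints $a_1,\dots,a_4$, and pick interior points $x_i\in e_i$. Lemma~\ref{lem_oddevencuts}, applied to the $4$-edge cut $E(\{v\},V\setminus\{v\})$, forces any arc through $x_1,\dots,x_4$ with endpoints $x_i,x_j$ to contain $[x_i,a_i]\cup[x_j,a_j]$; the alternative conclusion $[v,x_i]\cup[v,x_j]$ is ruled out because injectivity would then reduce the arc to $[x_i,v]\cup[v,x_j]$, missing $x_k,x_l$. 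So such an arc must use two edges at $v$ internally and reach $e_k,e_l$ only via their far endpoints. I would then pick two further points $x_5,x_6$ on edges incident with $a_1,a_2$ distinct from $e_1,e_2$ (available since $\delta(G)\geq 3$), and combine Lemma~\ref{tripodlemma} at $a_1,a_2$ with the $v$-constraint to exhaust all choices of endpoint pair $\{i,j\}\subseteq\{1,2,3,4\}$ and rule out any valid arc. This exhaustive case analysis is the main technical step.

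\textbf{Three-connectedness.} Next, suppose $\{u,w\}$ is a $2$-cut separating $G$ into components $\mathcal{C}_1,\dots,\mathcal{C}_k$. Theorem~\ref{thm_5acChar}(2) combined with Lemma~\ref{lem_cutting_set} (taken with $n=2$) forces $k\leq 3$, and since $G$ is now known to be $3$-regular, each of $u,w$ sends exactly three edges into these components. For $k=3$, I would pick one point on each of the six edges incident with $u$ or $w$; applying Lemma~\ref{tripodlemma} at both $u$ and $w$ pins both arc-endpoints into the immediate neighborhoods of $u,w$ and constrains which edge-pairs at $u$ (respectively $w$) can be used internally, leaving no consistent way to visit all three lobes. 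For $k=2$, the same idea works with an extra point recording the parity of the edge cut between the two lobes through Lemma~\ref{lem_oddevencuts}.

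\textbf{Edge-cut condition.} Finally, suppose six edges $e_1,\dots,e_6$ cut $G$ into $k\geq 4$ components $\mathcal{C}_1,\dots,\mathcal{C}_k$. I would pick interior points $x_i\in e_i$ and form the multigraph $H$ with vertices $\mathcal{C}_1,\dots,\mathcal{C}_k$ and edges $e_1,\dots,e_6$. An arc through $x_1,\dots,x_6$ with endpoints $x_p,x_q$ then induces a trail in $H\setminus\{e_p,e_q\}$ using each of the four remaining edges exactly once and running between a specified endpoint of $e_p$ and one of $e_q$; moreover, any revisit of a component $\mathcal{C}_j$ must be realized by an additional internally vertex-disjoint path inside $\mathcal{C}_j$. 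A parity-and-connectivity analysis over all $\binom{6}{2}$ choices of $\{p,q\}$ and all endpoint assignments---mirroring the $K_4$ calculation where every such trail is blocked by having four odd-degree vertices or, in the all-even case, by start and end being forced into disjoint vertex pairs---rules out the existence of such an arc. The main obstacle throughout is the bookkeeping in Steps~2 and~3; the edge-cut step then reduces to a reasonably mechanical Euler-trail-style argument.
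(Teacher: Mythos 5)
Your setup is correct, and your treatment of the six-edge-cut condition is sound in spirit: the Euler-trail parity argument in the contracted multigraph is essentially the same obstruction the paper extracts from Lemma~\ref{lem_oddevencuts}, although you still owe the reduction to $k\leq 4$ and to the $K_4$ arrangement (the paper gets both from $3$-connectedness by double counting before running the parity argument). The two middle steps, however, contain genuine gaps: in each case the six points you propose \emph{can} lie on a common arc, so the exhaustive case analysis you defer to cannot close.

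For $3$-regularity, placing the two extra points near two \emph{different} neighbours $a_1,a_2$ of the degree-$4$ vertex $v$ is not enough. Take the wheel $W_4$ ($v$ joined to the $4$-cycle $a_1a_3a_2a_4$), put $x_i$ on the spoke $va_i$, $x_5\in a_1a_3$ and $x_6\in a_2a_4$ (an admissible choice under your prescription); then $x_1\to a_1\to x_5\to a_3\to x_3\to v\to x_4\to a_4\to x_6\to a_2\to x_2$ is an arc through all six points. So for some allowed choices of $x_5,x_6$ no contradiction arises, and you neither specify how to choose them correctly nor show that a correct choice always exists. The paper avoids this by concentrating \emph{both} extra points at a single neighbouring branch vertex $u$ and putting the sixth point on the connecting edge $e=vu$: the four points on edges at $v$ force both arc-endpoints into the star of $v$, the three points on edges at $u$ force one endpoint into the star of $u$, the only specified point lying in both stars is the one on $e$, and then $u$ must carry three arc-edges --- a clean contradiction. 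A similar problem defeats your $3$-connectedness step when $k=3$: with one point on each of the six edges joining the cut $\{u,w\}$ to the three components, the arc that enters $C_1$ from its endpoint on $uC_1$, crosses $C_1$ to $w$, passes through $w$ into $C_3$, crosses $C_3$ to $u$, passes through $u$ into $C_2$, and terminates at the point on $wC_2$ picks up all six points while satisfying Lemma~\ref{tripodlemma} at both $u$ and $w$; there is no parity obstruction, so your claim that the three lobes cannot all be visited is false. The paper instead works with $2$-\emph{edge}-cuts (equivalent for cubic graphs), placing one point on each cut edge and two more at a single cut-edge endpoint on each side, so that Lemma~\ref{lem_oddevencuts} forces both arc-endpoints onto one side while Lemma~\ref{tripodlemma} demands one on each side. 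Both of your middle steps need their point configurations redesigned along these lines.
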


\begin{proof}
Let $G$ be a $6$-ac graph which different from the nine $7$-ac graphs.

\emph{To see that $G$ is $3$-regular}, note that $G$ contains no vertices of degree $1$, since Theorem~\ref{chain_5-ac} implies that $G$ is cyclically connected. We suppress vertices of degree $2$. Suppose for a contradiction that $v$ is a vertex of $G$ of degree $\geq 4$. Since $G$ is cyclically connected, it follows that $G$ must have another branch point. Then one of the edges incident with $v$ have a branch point as its other endpoint, say $u$. Let this edge be $e$. Pick arcs $\alpha_1, \alpha_2, \alpha_3$ interior-disjoint from each other and $e$ with $\{v\} = \alpha_i \cap \alpha_j$, such that for each $i$, $\alpha_i \setminus \{ v\}$ contains no branch points of $G$. Also pick arcs $\beta_1, \beta_2$ interior-disjoint from the $\alpha_i$, each other and $e$, $\beta_1 \cap \beta_2 = \{ u \}$ and $\beta_i \setminus \{ u \}$ contain no branch points of $G$. Pick one point from the interior of each of $\alpha_i$, $\beta_j$ and $e$. Then, by Lemma~\ref{tripodlemma}, there is no arc going through these points. 

\emph{To see that $G$ is $3$-connected}, it suffices to show, since $G$ is $3$-regular, that it is $3$-edge connected, i.e.\ that there is no partition $V(G) = A \dot\cup B$ with $|E(A,B)| \leq 2$. Note that cyclical connectedness implies that $|E(A,B)| \geq 2$. So suppose for a contradiction that there is a $2$-edge cut $E(A,B) = \{e_1,e_2\}$. Let $e_i = a_ib_i$ with $a_i \in A$ and $b_i \in B$. Note that since $G$ is cyclically connected and $3$-regular, all four endpoints of $e_1$ and $e_2$ are distinct. In particular, $a_1$ is incident with two further edges $e_3,e_4$ which both have all their endpoints in $A$, and $b_2$ is with two further edges $e_5,e_6$ which both have all their endpoints in $B$. Pick six points $x_i \in e_i$. Since any arc $\alpha$ picking up $x_1$ and $x_2$ has to have, without loss of generality, both its endpoints on the $A$-side of $G \setminus \{x_1,x_2\}$ by Lemma~\ref{lem_oddevencuts}, it follows that it cannot pick up $x_5$ and $x_6$ without violating Lemma~\ref{tripodlemma}.

\emph{Finally, suppose deleting edges $e_1, \ldots, e_6$ from $G$ leaves components $C_1, \ldots, C_k$. We claim that $k \leq 3$.} First, observe that every edge $e_i$ is incident with at most 2 different components, and by $3$-connectedness, every component $C_i$ is incident with at least $3$ distinct edges. By double counting, it follows $k \leq 4$. 

So assume that $k=4$. Then every component must be incident with precisely $3$ of the $6$ edges. We claim that the four components and the $6$ edges are arranged like a $K_4$. For this, it suffices to show that for any two components there is only one edge incident with both components. If there were two components that share three incident edges, then $G$ would be disconnected, a contradiction. And if there are two components that share two incident edges, then the other two components must also share two further incident edges, from which we conclude that the remaining two edges form a disconnection of the $G$, contradicting once again $3$-connectedness.

Thus, the $4$ components together with the $6$ edges are arranged like a $K_4$. But then it follows from Lemma~\ref{lem_oddevencuts} that if we choose an interior point $x_i$ on each of the six edges $e_i$ for $1 \leq i \leq 6$, there is no arc $\alpha$ in the graph picking up these 6 points. Indeed, suppose that the arc $\alpha$ starts at $x_1$, traverse $x_2$ up to $x_5$ in the given order and ends and $x_6$. Write $v$ for the first vertex on $\alpha$ and assume $v \in V(C_1)$.

If $e_6$ is not incident with $C_1$, consider the cut $E(C_1, G \setminus C_1) = \Set{e_1,e_i,e_j}$ of $G$ with $1<i<j<6$. Let  $\beta := \alpha \restriction [0,\alpha^{-1}(x_j)]$ and $\gamma = \alpha \restriction [\alpha^{-1}(x_j),1]$ 
denote the subarcs of $\alpha$ from $x_1$ to $x_j$ and from $x_j$ to $x_6$ respectively. By Lemma~\ref{lem_oddevencuts}, it follows that $[x_j,w]$ with $w \in C_j \subseteq \beta$ is the final segment of $\beta$. Pick $y \in (x_j,w)$. Then $\Set{x_1,x_i,y}$ is a separation of $G$ separating $x_j$ from $x_6$, contradicting the fact that $\gamma$ is an arc in $G \setminus \Set{x_1,x_i,y}$ between these very two points. Finally, if $e_6$ is also incident with $C_1$, then say $C_2$ is incident with edges $e_i,e_j,e_\ell$ with $1<i<j<\ell<6$. Considering the arcs $\beta := \alpha \restriction [\alpha^{-1}(x_i),\alpha^{-1}(x_\ell)]$ and  $\gamma = \alpha \restriction [\alpha^{-1}(x_\ell),1]$, 
we may arrive at a similar contradiction as before.
\end{proof}

Before we start proving the converse, we need the following two lemmas. Note also that the properties $3$-connected and $3$-regular imply that our graph is simple, i.e.\ (even after suppressing all degree-2-vertices)  it contains no loops or parallel edges.

\begin{lemma}
Any four points of a $3$-regular, $3$-connected graph lie on a circle or a $\theta$-curve.
\end{lemma}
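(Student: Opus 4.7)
My plan is as follows. By Lemma~\ref{lem_wlogpointsonedges}, I may assume all four points $x_0, x_1, x_2, x_3$ lie in the interior of edges of $G$. Applying Theorem~\ref{thm_27} to the subset $\{x_0, x_1, x_2\}$, these three points lie on a common circle or $\theta$-curve $X$ in $|G|$. If $X$ is a circle, I subdivide the edge containing $x_3$ at $x_3$ and apply Menger's Theorem~\ref{menger} (using that $G$ is $2$-connected) to obtain two internally disjoint arcs $\alpha_1, \alpha_2$ from $x_3$ to $X$; their union with $X$ is then a $\theta$-curve through all four points.

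Otherwise $X$ is a $\theta$-curve with degree-$3$ vertices $a, b$ and three internally disjoint arcs $e_0, e_1, e_2$. I relabel so that $x_i \in e_i$ for $i = 0, 1, 2$ (else two of $x_0, x_1, x_2$ share an arc of $X$, so they lie on a common cycle, reducing to the previous case). Again find internally disjoint arcs $\alpha_1, \alpha_2$ from $x_3$ to $X$ with endpoints $p_1, p_2$. Crucially, since $G$ is $3$-regular and all three edges incident to $a$ (respectively, $b$) are used by $X$, the points $p_1, p_2$ must lie in the \emph{interior} of arcs of $X$, not at $a$ or $b$.

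Now I split into cases by whether $p_1, p_2$ lie on the same arc. If they lie on distinct arcs, then $Y := X \cup \alpha_1 \cup \alpha_2$, after suppressing the degree-$2$ vertex $x_3$, is homeomorphic to $K_4$; the four points $x_0, \ldots, x_3$ occupy four distinct of the six edges, leaving two ``free'' edges. Since $K_4$ minus any edge is a $\theta$-curve, deleting one of the two free edges exhibits a $\theta$-curve containing all four points. If instead $p_1, p_2$ both lie on the arc $e_0$ containing $x_0$ and $x_0$ lies \emph{outside} the segment $[p_1, p_2]$ of $e_0$, then the three internally disjoint arcs $e_1$, $e_2$, and the arc obtained by traversing $e_0$ from $a$ to one of $p_1, p_2$ (picking up $x_0$ if it lies on the $a$-side), switching to $\alpha_1 \cup \alpha_2$ through $x_3$, and then continuing along $e_0$ to $b$ (picking up $x_0$ if it lies on the $b$-side), form a $\theta$-curve through all four points.

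The main obstacle is the remaining case, where $x_0$ lies strictly between $p_1$ and $p_2$ along $e_0$; here $Y$ alone is not sufficient. I exploit the fact that $G$, being $3$-regular and $3$-connected, is also $3$-edge-connected. This forces $\{p_1, p_2\}$ not to be a topological $2$-cut of $|G|$, except possibly when $p_1, p_2$ lie interior to a single combinatorial edge $f$ of $G$; in that degenerate situation, $3$-connectivity applied to the two endpoints of $f$ again prevents a separation. Either way, I obtain an additional arc $\beta$ in $|G|$, internally disjoint from $Y$, which connects the ``bubble'' bounded by $\alpha_1, \alpha_2$ and $[p_1, p_2]_{e_0}$ (and containing both $x_0$ and $x_3$) to the rest of $X$. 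A case analysis on the attachment points of $\beta$---analogous in spirit, though notably simpler, to Case~1 of the proof of Proposition~\ref{prop_5acSuff}---then in each configuration routes an arc from $a$ to $b$ through both $x_0$ and $x_3$ via the chord $\beta$, which together with $e_1$ and $e_2$ forms the desired $\theta$-curve. The hard part is verifying this case analysis exhaustively; the key observation is that the chord $\beta$ lets an $a$-to-$b$ path leave $e_0$ to absorb $x_3$ via $\alpha_1 \cup \alpha_2$ and re-enter $e_0$ on the correct side to absorb $x_0$, without revisiting any point.
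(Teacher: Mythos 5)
Your setup is sound and, up to the final case, agrees in substance with the paper's route (the paper quotes the circle/theta/baguette trichotomy for four points and only works the baguette case; your build-up via Theorem~\ref{thm_27} plus two Menger arcs, the $K_4$ observation, and the ``$x_0$ outside $[p_1,p_2]$'' rerouting are all correct, as is the remark that $3$-regularity keeps the attachment points away from $a$ and $b$). The gap is in the remaining case, where your $Y$ is exactly a baguette curve with $x_0,x_3$ on the cycle $C=[p_1,p_2]_{e_0}\cup\alpha_1\cup\alpha_2$ and $x_1,x_2$ on $C'=e_1\cup e_2$. A single chord $\beta$ guaranteed only by ``$\{p_1,p_2\}$ is not a cut'' need not land where your rerouting needs it. Concretely, suppose $\beta$ runs from $q\in(p_1,x_0)\subseteq e_0$ to $r\in(a,p_1)\subseteq e_0$. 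In $Y\cup\beta$, any circle or $\theta$-curve through $x_1$ and $x_2$ must contain all of $e_1\cup e_2$ (the only branch points flanking $x_1,x_2$ are $a$ and $b$), so a $\theta$-curve through all four points would have to be $e_1\cup e_2\cup P$ with $P$ an $a$--$b$ path, internally disjoint from $e_1\cup e_2$, containing both the segment $[q,p_2]\ni x_0$ and the handle $[p_1,p_2]\ni x_3$. Both of those pieces end at $p_2$, so $P$ would use two edges at $p_2$ and hence not $[p_2,b]$; but $[p_2,b]$, $e_1$, $e_2$ are the only edges at $b$, so no such $P$ exists (and a circle containing $e_1\cup e_2$ is just $e_1\cup e_2$). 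So ``in each configuration routes an arc from $a$ to $b$ through both $x_0$ and $x_3$ via the chord $\beta$'' is false for this position of $\beta$, and nothing in your argument excludes it.

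The paper closes this case differently and you should adopt its device: apply Menger's Theorem~\ref{menger} with $3$-connectedness (and the fact that each cycle of the baguette has at least three vertices, since $G$ is simple) to obtain \emph{three} vertex-disjoint $C$--$C'$ paths, each meeting $C\cup C'$ only in its endpoints. Since $C\setminus\{x_0,x_3\}$ has only two segments, two of the three paths attach to the same segment; then $C'$ together with those two paths and the long arc of $C$ between their attachment points (the arc picking up both $x_0$ and $x_3$) is a $\theta$-curve containing all four points. This works uniformly and replaces your single-chord case analysis, which as written cannot be completed.
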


\begin{proof}
Let $G$ be a $3$-regular, $3$-connected graph. It is easy to check that $3$-regularity and $2$-connectedness imply that any $4$ points $x_1, \ldots, x_4$ of $|G|$ lie on a circle, a theta curve, or a baguette curve.

In the first two cases, we are done, so it remains to show that if our four vertices lie on a baguette curve, they also lie on a $\theta$-curve. Let $C_1$ and $C_2$ be the two cycles of the baguette curve. Note that we may assume that $x_1,x_2$ lie on $C_1$ and $x_3,x_4$ on $C_2$. Now by Menger's theorem (using $3$-connectedness of $G$ and the fact that $|V(C_i)| \geq 3)$, there are $3$ vertex disjoint paths $\alpha_1, \alpha_2, \alpha_3$ from $C_1$ to $C_2$, each meeting $C_1 \cup C_2$ only in their endpoints. Note that $C_1 \setminus \{x_1,x_2\}$ consists of two segments, so one of these segments meets both say $\alpha_1$ and $\alpha_2$. But then the cycle $C_2$ together with $\alpha_1$, then walking around $C_1$ picking up $x_1$ and $x_2$, and then following back along $\alpha_2$ gives us a $\theta$-curve containing the four points $x_1, \ldots, x_4$.
\end{proof}

\begin{lemma}
\label{lem_5ac-converse}
Any five points of a $3$-regular, $3$-connected graph lie on a circle or a $\theta$-curve.
\end{lemma}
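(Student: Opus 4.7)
The plan is to extend the preceding four-point lemma to five points by attaching the fifth point to the circle or $\theta$-curve obtained for the first four.

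Let $x_1, \ldots, x_5 \in |G|$, which by Lemma~\ref{lem_wlogpointsonedges} we may assume lie in the interior of edges. Apply the preceding lemma to $x_1, \ldots, x_4$ to obtain a circle or $\theta$-curve $X \subseteq |G|$ through these four points. If $x_5 \in X$ we are done. Otherwise, 3-connectedness of $G$ (yielding 2-connectedness of $|G| \setminus \{x_5\}$) together with Menger's theorem produces two internally disjoint arcs $\alpha_1, \alpha_2$ from $x_5$ to $X$ with endpoints $p_1, p_2$ on $X$; set $L := \alpha_1 \cup \alpha_2 \cup \{x_5\}$. Because $G$ is 3-regular, the branch vertices of any $\theta$-curve $X$ already use all of their incident edges inside $X$, so $p_1, p_2$ must lie in the interior of arcs of $X$.

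When $X$ is a circle, $X \cup L$ is a $\theta$-curve through all five points. When $X$ is a $\theta$-curve with arcs $e_1, e_2, e_3$ between branch vertices $a, b$, and $p_1, p_2$ attach to distinct arcs of $X$, then $Y := X \cup L$ is topologically $K_4$ on the four vertices $a, b, p_1, p_2$ with six arcs; since only five points are distributed over six arcs, at least one arc is point-free, and deleting it yields a $\theta$-curve through all five points.

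The remaining case is that $p_1, p_2$ attach to the same arc $e_1$ of $X$, so $Y$ has the structure of a baguette curve with cycles $C_1 := e_2 \cup e_3$ and $C_2 := [p_1, p_2] \cup L$ joined by the sub-arcs $[a, p_1]$ and $[p_2, b]$ of $e_1$. If no $x_i$ (for $i \leq 4$) lies in $(p_1, p_2)$, then $Y \setminus (p_1, p_2)$ is already a $\theta$-curve through all five points. Otherwise, we follow the strategy of the preceding four-point lemma: use Menger together with 3-connectedness to obtain a third internally disjoint $C_1$-$C_2$-path $\gamma$. Combining two well-chosen spokes from $\{[a, p_1], [p_2, b], \gamma\}$ with an appropriate arc of $C_1$ through $x_3, x_4$ and with the two arcs of $C_2$ through $x_5$ and any points on $[p_1, p_2]$, one assembles a $\theta$-curve through all five points.

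The main obstacle is the case analysis in this last case. The delicacy is that the two spokes chosen must have their $C_1$-endpoints on the same segment of $C_1 \setminus \{x_3, x_4\}$ (so that the outer arc can pick up \emph{both} $x_3$ and $x_4$), while at the same time any $x_i$ that happens to lie on one of the joining sub-arcs $[a, p_1]$, $[p_2, b]$ must be covered by the chosen spokes or by $\gamma$. These competing constraints are resolved by splitting into subcases according to the position of $\gamma \cap C_1$ relative to $x_3, x_4$ and the positions of $x_1, x_2$, re-routing $\gamma$ via a fresh application of Menger's theorem in $G \setminus \{a,b\}$ whenever a configuration threatens to leave an $x_i$ uncovered.
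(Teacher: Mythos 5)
Your overall route is the same as the paper's: obtain a circle or $\theta$-curve through $x_1,\ldots,x_4$, attach $x_5$ by two independent arcs, dispose of the circle case and the ``two different edges'' case (your $K_4$ counting argument there is clean and correct), and then fight the remaining baguette configuration with a third $C_1$--$C_2$ path. The first three stages are fine. The gap is in the last stage, and it is exactly the sub-case you flag as delicate: when one of $x_1,x_2$ lies on a connecting sub-arc, say $x_1\in(a,p_1)$, while another marked point lies in $(p_1,p_2)\subseteq C_2$. Here ``use Menger together with $3$-connectedness to obtain a third internally disjoint $C_1$--$C_2$-path $\gamma$'' does not do what you need: Menger's Theorem~\ref{menger} guarantees the \emph{existence} of three disjoint $C_1$--$C_2$ paths, but it does not let you keep the two given spokes $[a,p_1]$ and $[p_2,b]$ and merely add a third; the augmented path system may reroute around the edge of $[a,p_1]$ carrying $x_1$, and then no $\theta$-curve built from $C_1$, $C_2$ and two of the new spokes contains $x_1$. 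Your proposed repair, ``a fresh application of Menger's theorem in $G\setminus\{a,b\}$,'' is not a repair: you do not say between which sets it is applied, and $G\setminus\{a,b\}$ is only guaranteed to be connected, so Menger there yields a single path and gives no control over covering $x_1$.

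This is precisely the point where the paper has to invoke the refined alternating-walk version of Menger (Lemma~\ref{lem_existencealternatingwalk}, including its ``moreover'' clauses, via Theorem~\ref{thm_existencealternatingwalk}). In its case (4) --- two points on each cycle and one on a connecting arc, which is isomorphic to your hard sub-case --- the paper first asks whether the three disjoint paths can be chosen through the stranded point; if not, it uses the controlled structure of the symmetric difference to show that the leftover piece of the original spoke containing that point survives as an arc $\beta'$ attached at interior vertices of two of the \emph{new} disjoint paths, and only then can it route a circle through four of the five points. Without that machinery (or an equivalent argument), your final case analysis ``according to the position of $\gamma\cap C_1$'' cannot be completed, because you have no guarantee that any admissible $\gamma$, or any rerouted path system, passes through $x_1$. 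The sub-case where both of $x_1,x_2$ lie in $(p_1,p_2)$ \emph{is} salvageable by plain Menger plus pigeonhole on which segment of $C_1\setminus\{x_3,x_4\}$ two of the three spokes meet; it is the single stranded point on a spoke that requires the alternating-walk argument, and that is the idea missing from your proof.
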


\begin{proof}
Let $G$ be a $3$-regular, $3$-connected graph and consider five points $x_1, \ldots, x_5$ of $|G|$. If any four of them lie on a circle, then we are done. 

Thus, by the previous lemma, we may assume that $x_1,\ldots, x_4$ lie on a $\theta$-curve with edges $e_1,e_2,e_3$ and vertices $a$ and $b$. By symmetry, we may assume that $x_1,x_2 \in e_1$, $x_3 \in e_2$ and $x_4 \in e_3$. Connect the last point $x_5$ to the $\theta$-curve via two new independent arcs $\alpha_1$ and $\alpha_2$. Since $G$ is $3$-regular, the two arc $\alpha_1$ and $\alpha_2$ cannot hit the $\theta$-curve in $a$ or $b$. 
If the two arcs connect to different edges of the $\theta$-curve, then in particular either $e_2$ or $e_3$ is hit, and by deleting a suitable part of $e_2$ or $e_3$ not containing $x_3$ or $x_4$ we have found a $\theta$-curve containing $x_1,\ldots, x_5$.
Thus, we may assume that the two arcs hit the same edge $e_i$, and then we have found a baguette curve of $G$ containing all five points $x_1, \ldots, x_5$. We will show that in this case, they also lie on a $\theta$-curve. 

Let $C_1$ and $C_2$ be the two cycles of the baguette curve. Up to symmetry, the following cases can occur:
\begin{center}
\begin{tabular}{l}
(1) \ $x_4,x_5 \notin C_1 \cup C_2$, \ \ 
(2) \ $x_1,x_2,x_3 \in C_1$, $x_4\in C_2$ and $x_5 \notin C_1 \cup C_2$, \\
(3) \ $x_1,x_2 \in C_1$ and $x_3,x_4,x_5 \in C_2$, or \\
(4) \ $x_1,x_2 \in C_1$, $x_3,x_4 \in C_2$ and $x_5 \notin C_1 \cup C_2$, 
\end{tabular}
\end{center}

In case (1), if two vertices lie outside of $C_1 \cup C_2$, then it's easy to find a circle inside the baguette curve containing four of the vertices. 
In case (2), we may again find a circle inside the baguette curve picking up $x_4$, $x_5$ and two of the remaining three vertices on $C_1$. 

In case (3) we follow a strategy similar to the previous lemma. By Menger and $3$-connectedness, there are $3$ vertex disjoint paths $\alpha_1, \alpha_2, \alpha_3$ from $C_1$ to $C_2$, each meeting $C_1 \cup C_2$ only in their endpoints. Note that $C_1 \setminus \{x_1,x_2\}$ has two components, so one of these segments meets say $\alpha_1$ and $\alpha_2$. But then we can follow $\alpha_1$, then walking around $C_1$ picking up $x_1$ and $x_2$, and then following $\alpha_2$ and then walk around $C_2$ back to the endpoint of $\alpha_1$ in the correct direction so as to pick up two out of the three vertices on $C_2$. So we have found four points on a circle.

In case (4), let us denote by $\beta$ the $C_1-C_2$-edge of our baguette curve containing $x_5$. As before, by Menger and $3$-connectedness, there are three vertex disjoint $C_1-C_2$ paths $\alpha_1, \alpha_2$ and $\alpha_3$.

Subcase (4a). If it is possible to choose arcs $\alpha_1,\alpha_2,\alpha_3$ such that one of them contains $x_5$, then we do so. Assume that $\alpha_1$ contains $x_5$. If a second path say $\alpha_2$ hits $C_1 \setminus \{x_1,x_2\}$ in the same segment as $\alpha_1$, then first using $\alpha_1$, then picking up $x_1,x_2$ on $C_1$, then using $\alpha_2$, and then returning to $\alpha_1$ on $C_2$ picking up at least one more point say $x_4$ gives a circle containing four of our points, and we are done. Otherwise, by symmetry and pigeon hole principle, we may assume that $\alpha_2$ and $\alpha_3$ both hit $C_1 \setminus \{x_1,x_2\}$ as well as $C_2 \setminus \{x_3,x_4\}$ in the same segments, and so it is easy finding a circle containing $x_1, \ldots, x_4$ and we are again done.

Subcase (4b). No path system between $C_1$ and $C_2$ contains $x_5$. By construction (and the fact that we have excluded subcase 3a) there is a subarc $\beta' \subseteq \beta$ such that $x_5 \in \beta'$ and say $\beta'(0) \in \alpha_1$, $\beta'(1) \in \alpha_2$ and which is otherwise disjoint from $C_1 \cup C_2 \cup \alpha_1 \cup \alpha_2 \cup \alpha_3$. Now if say $\alpha_2$ hits $C_1 \setminus \{x_1,x_2\}$ in the same segment as $\alpha_3$, then by following $\alpha_3$, picking up $x_1,x_2$ on $C_1$, then following along $\alpha_2$ until we can turn into $\beta'$ to pick up $x_5$, and then following $\alpha_1$ into $C_2$, and back to the beginning of $\alpha_3$ picking up one more point say $x_4$ on $C_2$, we have found a circle containing four of our points, and are done. Otherwise, by symmetry and pigeon hole principle, we may assume that $\alpha_1$ and $\alpha_2$ both hit $C_1 \setminus \{x_1,x_2\}$ as well as $C_2 \setminus \{x_3,x_4\}$ in the same segments, and so it is easy finding a circle containing $x_1, \ldots, x_4$ and we are again done.
\end{proof}

We are now ready to prove the converse direction of our main characterization theorem.

\begin{prop}
\label{prop_6acSuff}
Let $G$ be a simple $3$-regular, $3$-connected graph such that removing any 6 edges does not disconnect $G$ into 4 or more components. Then $G$ is $6$-ac.
\end{prop}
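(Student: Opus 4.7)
The plan is to mirror the case-analysis strategy used in the proof of Proposition~\ref{prop_5acSuff}, adapted to the $3$-regular $3$-connected setting and augmented with one more point. Let $x_1,\dots,x_6\in |G|$. By Lemma~\ref{lem_wlogpointsonedges} we may assume each $x_i$ lies in the interior of an edge. Apply Lemma~\ref{lem_5ac-converse} to $x_1,\dots,x_5$: these five points lie on a common circle or $\theta$-curve in $G$. If they lie on a circle $C$, then by $3$-connectedness and Menger's Theorem~\ref{menger} there are two internally disjoint arcs from $x_6$ to $C$; their union with $C$, after deleting a short subarc of $C$ avoiding the $x_i$, forms a $\theta$-curve covering all six points, which is $6$-ac. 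So assume $x_1,\dots,x_5$ lie on a $\theta$-curve $X$ with vertices $a,b$ and edges $e_1,e_2,e_3$, but on no single cycle, which forces each edge $e_i$ to contain at least one $x_j$.

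Since $G$ is $3$-regular, the vertices $a,b$ have all three incident edges inside $X$, so any two internally disjoint $x_6$--$X$ arcs $\alpha_1,\alpha_2$ (produced by Menger) attach to interior points of edges of $X$. I would then run a case analysis on the possible distributions of $\{x_1,\dots,x_5\}$ across $\{e_1,e_2,e_3\}$ (the patterns $(3,1,1)$ and $(2,2,1)$ up to permutation) and on which edges $\alpha_1,\alpha_2$ attach to. In the majority of subcases, the subgraph $Y = X \cup \alpha_1 \cup \alpha_2$ already contains, after deletion of a suitable subarc disjoint from $\{x_1,\dots,x_6\}$, one of the basic $6$-ac subspaces (a $\theta$-curve, dumbbell, figure-of-eight, happy-face-curve or baguette-curve) through all six points; this is analogous to the ``easy'' cases~(i)--(iii) and (a)--(i) of Proposition~\ref{prop_5acSuff}.

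The remaining difficult configurations are those in which $Y$ effectively realizes an inflated-$K_4$ fragment: four distinguished cut vertices of $Y$ together with six internally disjoint arcs, one carrying each $x_i$, arranged combinatorially as the six edges of a $K_4$. In such a configuration, Lemma~\ref{lem_oddevencuts} and Lemma~\ref{tripodlemma} would prevent any arc in $Y$ from catching all six $x_i$. Here I would invoke the hypothesis that removing any six edges of $G$ leaves at most three components, together with the alternating-walk version of Menger's theorem (Theorem~\ref{thm_existencealternatingwalk}), to produce an additional arc $\delta$ in $G$ internally disjoint from $Y$ connecting two components of $G$ minus the six edges carrying $x_1,\dots,x_6$. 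The symmetric-difference property of Lemma~\ref{lem_existencealternatingwalk} then lets me reroute parts of $Y$ so that $Y\cup\delta$, after deletion of appropriate subarcs, contains a $6$-ac subspace (a dumbbell or a suitably subdivided $\theta$-curve) through $\{x_1,\dots,x_6\}$.

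The main obstacle is the combinatorial explosion of subcases relative to Proposition~\ref{prop_5acSuff}: each distribution of $\{x_1,\dots,x_5\}$ on $X$, each attachment pattern of $\alpha_1,\alpha_2$, and—where needed—each attachment of the auxiliary arc $\delta$ spawns its own sub-configuration, and the most delicate ones are exactly those that would become obstructions if $G$ were allowed to be an inflated $K_4$. The conceptual crux is to use the $6$-edge-cut hypothesis (via alternating walks) to rule out precisely this obstruction in a uniform way, and the bookkeeping of the resulting sub-configurations is what makes this proof noticeably more intricate than its $5$-ac counterpart.
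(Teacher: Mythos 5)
Your overall strategy matches the paper's: reduce to interior points, put $x_1,\dots,x_5$ on a $\theta$-curve via Lemma~\ref{lem_5ac-converse}, attach $x_6$ by two Menger arcs, split on the distributions $(3,1,1)$ and $(2,2,1)$, and use the $6$-edge-cut hypothesis together with alternating walks to break the $K_4$ obstruction. That last part corresponds to the paper's Case~(B) and is the right idea.

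However, there is a genuine gap in your treatment of the ``easy'' cases: you list the happy-face curve and the baguette curve among the ``basic $6$-ac subspaces,'' and they are not. A baguette curve with one interior point on each of its six edges is \emph{not} spanned by any arc: an arc passing through $k$ branch vertices of the baguette decomposes into at most $k+1$ maximal segments each contained in a single closed edge, and since the baguette has only $4$ branch vertices, the arc meets at most $5$ of the $6$ edges. (The happy-face curve is likewise not $6$-ac; in fact it cannot even occur here, since its degree-$4$ branch point has no home in a $3$-regular graph.) This misclassified configuration is exactly the paper's critical Case~(A): $\alpha_1,\alpha_2$ hitting the segments $(x_1,x_2)$ and $(x_2,x_3)$ of the edge carrying three points produces a baguette curve with one $x_i$ on each edge. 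Your proposal would declare this subcase finished when it is not, and it is \emph{not} an inflated-$K_4$ fragment (two of its vertex pairs are joined by parallel edges, two pairs by no edge), so your planned use of the $6$-edge-cut hypothesis does not cover it either. The paper resolves it differently: by $3$-connectedness it extracts a third $C_1$--$C_2$ path alternating with respect to the two existing connecting arcs (Lemma~\ref{lem_existencealternatingwalk}), and a further subcase analysis on where the rerouted paths attach then yields a circle through five points or a $\theta$-curve/dumbbell through all six. To repair your argument you would need to add this alternating-path step for the baguette configuration, using only $3$-connectedness and $3$-regularity, separately from the $K_4$ case.
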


\begin{proof}
Pick six points $x_1, \ldots, x_6$ from $G$ which we may assume, by Lemma~\ref{lem_wlogpointsonedges}, to be interior points of edges. By Lemma~\ref{lem_5ac-converse}, there is a $\theta$-curve $\Theta$ containing the first five points $x_1, \ldots, x_5$. Write $e,f,g$ for the edges of $\Theta$ and $a,b,$ for the vertices of $\Theta$. We may assume that every edge of $e,f,g$ is incident with a point $x_i$, and so up to symmetry there are two cases to consider, namely
\begin{enumerate}[label=(\Alph*)]
\item $x_1 < x_2 < x_3 \in e$ (ordered from $a$ to $b$), $x_4 \in f$ and $x_5 \in g$, or 
\item $x_1,x_2\in e$, $x_3,x_4 \in f$ and $x_5 \in g$. 
\end{enumerate}

We may assume that $x_6 \notin \Theta$. Pick two independent $x_6-\Theta$ arcs $\alpha_1$ and $\alpha_2$. By $3$-regularity, the arcs cannot hit $\Theta$ in $a$ or $b$. 

\smallskip

\noindent\textbf{In case (A)}, if one of the arcs hits $\Theta$ on a segment of $\Theta \setminus \{x_1,\ldots,x_5\}$ incident with $a$ or $b$, then it's easy to see that all $6$ points lie on a theta curve or on a dumbbell. Similarly, if the two arcs hit the same segment of $\Theta \setminus \{a,b,x_1,\ldots,x_5\}$ then all $6$ points lie on a theta curve. Hence, it remains to investigate the case where $\alpha_1$ hits on the segment $(x_1,x_2)\subseteq e$ and $\alpha_2$ hits on the segment $(x_2,x_3) \subseteq e$. In this situation, we have a baguette curve consisting of two cycles $C_1$ and $C_2$ and disjoint $C_1-C_2$ arcs $\beta_1$ and $\beta_2$ with $x_1,x_2 \in C_1$, $x_3,x_4 \in C_2$, $x_5 \in \beta_1$ and $x_6 \in \beta_2$ (i.e.\ one point $x_i$ on every edge of the baguette curve).

By $3$-connectedness, and the fact that $|V(C_i)| \geq 3$, there exists a $C_1-C_2$ path $\beta_3$ which is \emph{alternating} with respect to $\beta_1$ and $\beta_2$. 
Indeed, by Lemma~\ref{lem_existencealternatingwalk} and the fact that in a $3$-regular graph, every alternating walk is automatically a path, we may choose an alternating path $\beta_3$ such that the symmetric difference $\beta_1 \triangle \beta_2 \triangle \beta_3$ yields 3 disjoint $C_1-C_2$ paths $\gamma_1, \gamma_2$ and $\gamma_3$ traversing the shared edges with $\beta_3$ in the same order as $\beta_3$. Three subcases arise.

\smallskip

\noindent (1) If there exists a $C_1-C_2$ path containing $x_5$ and $x_6$, then our $6$ points lie on a dumbbell. So may assume that $\beta_3$ does not contain both $x_5$ and $x_6$.

\smallskip

\noindent (2) If $\beta_3$ contains none of $x_5$ and $x_6$, then $\bigcup \gamma_i$ covers both $x_5$ and $x_6$, so either, a single $\gamma_i$ contains both $x_5$ and $x_6$ and we are back in (1), or we have say $x_5 \in \gamma_1$ and $x_6 \in \gamma_2$, and the following subcases arise.

$\bullet$  If $\gamma_1,\gamma_2$ hit the same segment of $C_1 \setminus \{x_1,x_2 \}$, then find a cycle picking up $5$ of our $6$ points, and we are done, and similarly, if $\gamma_1,\gamma_2$ hit the same segment of $C_2 \setminus \{x_3,x_4 \}$.

$\bullet$ Otherwise, note that the unique segment $\delta_i$ on $C_i \setminus \gamma_3$ between the endpoints of $\alpha_1$ and $\alpha_2$ contains precisely one point $x_j$. Thus, $\alpha_1 \cup \alpha_2 \cup \delta_1 \cup \delta_2$ is a circle containing 4 of our points, and it is then easy to see that using $\alpha_3$ and suitable segments of $C_i \setminus \delta_i$, we can find a $\theta$-curve containing all $6$ of our points.

\smallskip

\noindent (3) In the final subcase, we may assume that $\beta_3$ covers $x_5$ but not $x_6$. Then $x_6 \in \gamma_1$ say, and note that by construction of the symmetric difference, there is an arc $\delta \ni x_5$ which is internally disjoint from $C_1 \cup C_2 \cup \bigcup \gamma_i$ and has its endpoints at interior vertices of some $\gamma_i$ and $\gamma_j$. 
Note that it follows from Lemma~\ref{lem_existencealternatingwalk}(2) that $i\neq j$. If $i=1$ and $j \neq 1$ then we are back in case (1). And if say $i=2$ and $j=3$, then $\gamma_1$ and say $\gamma_2$ hit the same segment of $C_1 \setminus \{x_1,x_2\}$, and so by starting with $\gamma_1$, picking up $x_1,x_2$ on $C_1$, following $\gamma_2$, switching to $\delta$, then following $\gamma_3$ to $C_2$, and move on $C_2$ back to $\gamma_1$ picking up at least one more vertex of $x_3$ and $x_4$, we have found a cycle containing $5$ of our points, so we are again done. This completes the argument for case (A).

\medskip

\noindent\begin{minipage}{0.6\textwidth}
\textbf{In case (B)}, we may use the same arguments as at the beginning of (A) to see that the only critical case is where $\alpha_1$ hits on the segment $(x_1,x_2)\subseteq e$ and $\alpha_2$ hits on the segment $(x_3,x_4) \subseteq f$. Then $x_1, \ldots, x_6$ lie on the 6 edges of a $K_4$, where we label points and edges as in the figure. 

Now consider $|G| \setminus \{x_1, \ldots, x_6\}$. By the third-listed assumption on $G$, this space has at most $3$ components, and hence there must exist an arc $\delta$ internally disjoint from $K_4$ between two vertices $v,w$ of $G$ with say
$v \in (a,x_6) \; \text{ and } \; w \notin [x_5,a] \cup [x_4,a] \cup [x_6,a]$.
\end{minipage}
\begin{minipage}{0.4\textwidth}
\centering
\begin{tikzpicture}[scale=1]
\node[circle, draw, inner sep=0.6pt, minimum width=5pt, fill=black]  (a) at (0,0) {};
\node at (.3,.2) {$a$};

\node[circle, draw, inner sep=0.6pt, minimum width=5pt, fill=black]  (b) at (90:2cm) {};
\node at (90:2.3cm) {$b$};
\node at (110:2.3cm) {$e_1$};
\node[circle, draw, inner sep=0.6pt, minimum width=5pt, fill=black]  (c) at (210:2cm) {};
\node at (210:2.3cm) {$c$};
\node at (230:2.3cm) {$e_2$};
\node[circle, draw, inner sep=0.6pt, minimum width=5pt, fill=black]  (d) at (330:2cm) {};
\node at (330:2.3cm) {$d$};
\node at (350:2.3cm) {$e_3$};
\draw (0,0) circle (2cm);
\draw (a) -- (b);
\draw (a) -- (c);
\draw (a) -- (d);
\node[circle, red, draw, inner sep=0.6pt, minimum width=5pt, fill=red]  (x1) at (-210:2cm) {};
\node at (-210:2.3cm) {$x_1$};
\node[circle, red, draw, inner sep=0.6pt, minimum width=5pt, fill=red]  (x3) at (-330:2cm) {};
\node at (-330:2.3cm) {$x_3$};
\node[circle, red, draw, inner sep=0.6pt, minimum width=5pt, fill=red]  (x2) at (-90:2cm) {};
\node at (-90:2.3cm) {$x_2$};
\node[circle, draw,red, inner sep=0.6pt, minimum width=5pt, fill=red]  (x4) at (90:1cm) {};
\node at (70:1cm) {$x_4$};
\node at (80:1.6cm) {$e_4$};
\node[circle, draw, red, inner sep=0.6pt, minimum width=5pt, fill=red]  (x5) at (210:1cm) {};
\node at (190:1cm) {$x_5$};
\node[circle, draw, red, inner sep=0.6pt, minimum width=5pt, fill=red]  (x6) at (330:1cm) {};
\node at (350:1cm) {$x_6$};

\end{tikzpicture}
\end{minipage}

By symmetry, there are five cases to consider for the position of $w$, namely \begin{center}
\begin{tabular}{lll}
(a) \ $w \in (d,x_6) \subseteq e_6$, &
(b) \ $w \in (d,x_3) \subseteq e_3$, &
(c) \  $w \in (b,x_3) \subseteq e_3$, \\
(d) \  $w \in (b,x_4) \subseteq e_4$, and & 
(e) \ $w \in (b,x_1) \subseteq e_1$. &
\end{tabular}
\end{center}
By inspection, one checks that in cases (b)--(e) there is an arc contained in $K_4 \cup \delta$ which contains all our $6$ points. Thus, it remains to deal with case (a). 

Let $\beta_1, \beta_2$ be the two disjoint $\{v\}-\{a,d\}$ paths in $K_4$. Since $G$ is $3$-connected, it follows from Corollary~\ref{thm_existencealternatingwalk} that there is a further $v-\{a,d\}$ path $\beta_3$ which is alternating with respect to $\{\beta_1,\beta_2\}$ (where again we use that any alternating walk must be a path by $3$-regularity). Note that by $3$-regularity, if $h$ denotes the third edge incident with $v$, then $\beta_3$ and $\delta$ agree on $h$. Starting from $v$, let $z$ be the first vertex on $\beta_3$ which lies one $K_4 \setminus e_6$, and let $y$ be the vertex before $z$ on $\beta_3$. Note that we may assume that 
\begin{enumerate}
\item either $y \in (x_6,d) \subseteq e_6$ and $z \in (x_2,d) \cup (d,x_3)$, or
\item $y \in (a,x_6) \subseteq e_6$ and $z \in (x_5,a) \cup (a,x_4)$,
\end{enumerate}
as otherwise the arc $\beta_3$ between $y$ and $z$ witnesses (up to symmetry) that we are in one of the cases (b) -- (f). Now by the fact that $\beta_3$ has been chosen according to Corollary~\ref{thm_existencealternatingwalk}, taking the symmetric difference of $\{\beta_1,\beta_2,\beta_3 \}$ yields three internally disjoint $\{v\}-\{a,d,z\}$ paths $\gamma_1,\gamma_2,\gamma_3$.

\textbf{Claim: In case (1), there are two independent $\{x_6\}-\{d,z\}$ paths internally disjoint from $K_4 - e_6$.}
This follows from Menger's Theorem~\ref{menger} once we show that inside the subgraph $\beta_1 \cup \beta_2 \cup \beta_3$ no single point separates $x_6$ from the set $ \{ z,d \}$. So suppose for a contradiction that there is such a separating point $s$. Since $\beta_2$ is a $x_6 -d$ path, we must have $s \in (x_6,d)$. But also walking from $x_6$ along the edge $e_3$ to $v$ and then along $\beta_3$ to $z$ is a $x_6 - z$ path, it follows that $s \in (x_6,d) \cap \beta_3$, and so $s \notin \gamma_1 \cup \gamma_2 \cup \gamma_3$. But then going from $x_6$ to $v$ on $e_3$, and then taking a suitable $\gamma_i$ to $\{d,z\}$ shows that $s$ cannot have been a separator.

\textbf{Claim: In case (2), there are two independent $\{x_6\}-\{a,z\}$ paths internally disjoint from $K_4 - e_6$.} Once again this will follow from Menger's Theorem~\ref{menger} once we show that inside the subgraph $\beta_1 \cup \beta_2 \cup \beta_3$ no single point separates $x_6$ from the set $ \{ z,a \}$. So suppose for a contradiction that there is such a separating point $s$. Again, we must have $s \in (a,x_6)$. But also walking from $x_6$ along the edge $e_3$ to $w$ and then along $\delta - h$ and $\beta_3 -h$ to $z$ is a $x_6 - z$ path, it follows that $s \in (a,x_6) \cap (\beta_3 - h) $. In particular, we have $s \neq v$ and $s \notin \gamma_1 \cup \gamma_2 \cup \gamma_3$. So either $s \in (a,v)$ or $s \in (v,x_6)$. In the first case, we can walk from $x_6$ to $v$ on $e_6$, and then take a suitable $\gamma_i$ to reach $\{ a,z\}$. In the second case, we can walk from $x_6$ to $w$ on $e_6$, then to $v$ on $\delta$, and then take a suitable $\gamma_i$ to $\{a,z\}$. 

Thus, it follows that by substituting that segment $[a,z]$ or $[d,z]$ of $K_4 \setminus e_6$ with those two disjoint paths, we see that all of our 6 points lie on a $\theta$-curve.
\end{proof}

\subsubsection*{Some examples of small \texorpdfstring{$6$}{6}-ac graphs}

By checking against a list of simple, 3-regular graphs of small order\footnote{See e.g.\ \url{https://en.wikipedia.org/wiki/Table_of_simple_cubic_graphs}}, we see that the only graph on $6$ vertices satisfying our characterization is $K_{3,3}$, and that the only two graphs on $8$ vertices satisfying our assumption are $K_{3,3}$ with an extra edge connecting the midpoints of two, non-adjacent edges of $K_{3,3}$ (the so-called Wagner graph), and the 3-dimensional hypercube.

\subsection{Characterizing \texorpdfstring{$7$}{7}-ac and \texorpdfstring{$\omega$}{w}-ac Graphs}

\begin{theorem}
\label{7ac1complex} 
Let $G$ be a non-degenerate graph. Then the following are equivalent:

(a) $G$ is $7$-ac, (b) $G$ is $\omega$-ac, and (c) 
\begin{itemize}
\item $G$ is homeomorphic to one of the $6$ finite graphs which are $7$-ac, or
\item $G$ is homeomorphic to one of the finite $7$-ac graphs minus possibly some endpoints. 
\end{itemize}
\end{theorem}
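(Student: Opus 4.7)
The plan is to establish the cycle (c) $\Rightarrow$ (b) $\Rightarrow$ (a) $\Rightarrow$ (c). The implication (b) $\Rightarrow$ (a) is tautological, and (c) $\Rightarrow$ (b) is a routine verification: an arc, ray, or double ray is homeomorphic to an interval, so any finite subset lies on the whole space; a circle minus a point outside the finite set is an arc; and a lollipop (with or without its endpoint), $\theta$-curve, figure-of-eight, or dumbbell has only one or two branch points and one or two cycles, so we can route an arc through any finite set of points explicitly. Hence the bulk of the work is the main implication (a) $\Rightarrow$ (c).

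For (a) $\Rightarrow$ (c), I would first invoke Theorem~\ref{thm_6acChar}: since $G$ is $7$-ac it is in particular $6$-ac, so either $G$ is homeomorphic to one of the nine listed $7$-ac graphs (in which case we are done), or, after suppressing all degree-$2$-vertices, $G$ is $3$-regular, $3$-connected, and not an inflated $K_4$. As suppressing degree-$2$-vertices does not alter the topological space $|G|$, we may assume $G$ itself is such a graph. It remains to derive a contradiction, i.e., to exhibit seven points in $G$ that no arc can contain.

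The key combinatorial observation is that any $3$-regular, $3$-connected graph $G$ with $G \neq K_4$ (which holds since $K_4$ is an inflated-$K_4$) contains a vertex $u$ whose neighborhood $N(u) = \Set{v, w, u'}$ has two non-adjacent elements: if every $N(u)$ formed a triangle, then $\Set{u} \cup N(u)$ would span a $K_4$ in which every vertex uses all three incident edges inside it, so $G$ would be a disjoint union of $K_4$'s; by connectivity $G = K_4$, contradiction. Fix such $u$ with non-adjacent $v, w \in N(u)$, let $vu_2, vu_3$ be the other edges at $v$ and $wt_2, wt_3$ the other edges at $w$, and pick interior points $x_1 \in uv$, $x_2 \in vu_2$, $x_3 \in vu_3$, $x_4 \in uw$, $x_5 \in wt_2$, $x_6 \in wt_3$, $x_7 \in uu'$. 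Crucially $x_1, x_4, x_7$ sit on the three edges at $u$, so Lemma~\ref{tripodlemma} applies simultaneously at the three vertices $v$, $w$, and $u$: any arc $\alpha$ containing all seven points must have endpoints in each of the three tripod-regions $R_v, R_w, R_u$. The only pairwise intersections of these regions are the singletons $\Set{x_1} = R_v \cap R_u$ and $\Set{x_4} = R_w \cap R_u$.

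With only two arc-endpoints available for three required regions, at least one endpoint must coincide with $x_1$ or $x_4$, and a case analysis on the direction in which $\alpha$ extends from these special endpoints forms the main technical obstacle. Every case reduces to the same degree-$3$ obstruction: for instance, with an endpoint at $x_1$ extending toward $v$, the interior points $x_2, x_3$ force $\alpha$ to reach $v$ via all three edges $vu, vu_2, vu_3$, so $v$ attains arc-degree $3$, impossible. Extending toward $u$ instead forces $u$ to have degree $3$ (via $uv, uw, uu'$), unless the other endpoint is at $x_4$ and extends outward to $w$---but then $w$ collects $wu, wt_2, wt_3$ and attains degree $3$. Symmetric arguments cover the remaining sub-cases. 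Hence no arc contains all seven points, contradicting $7$-ac and completing the proof.
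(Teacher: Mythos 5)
Your argument is correct, but it takes a genuinely different route from the paper. The paper's proof of (a) $\Rightarrow$ (c) simply invokes the proof of Theorem~2.12 of \cite{acpaper} to conclude that a $7$-ac graph has at most two vertices of degree $\geq 3$, and then reads off the possible homeomorphism types (interval, circle, or a finite bouquet of cycles, finite paths and rays hanging off at most two branch vertices). You instead route everything through Theorem~\ref{thm_6acChar}: a $7$-ac graph is $6$-ac, so it is either one of the nine listed graphs or (after suppression) $3$-regular, $3$-connected and not an inflated $K_4$, and you kill the second alternative with a hand-built $7$-point obstruction. Your obstruction is sound: since $G \neq K_4$ there is a vertex $u$ with non-adjacent neighbours $v,w$, the three tripod regions $R_u, R_v, R_w$ from Lemma~\ref{tripodlemma} pairwise meet only in $\Set{x_1}$ and $\Set{x_4}$, pigeonhole forces an arc endpoint at $x_1$ or $x_4$, and in every branch of the ensuing case analysis the arc is forced to have a third endpoint inside some edge where none is permitted. (Your write-up compresses this endpoint-tracking --- e.g.\ the sub-case where the arc passes through $u$ via the half-edges of $uv$ and $uu'$, forcing the second endpoint to be exactly $x_4$, deserves to be spelled out --- but each case does close.) The trade-off: the paper's proof is two lines but leans on an external reference for the key structural fact; yours is self-contained within this paper but logically depends on Theorem~\ref{thm_6acChar}, the heaviest result here, so it proves the $7$-ac characterization as a corollary of the $6$-ac one rather than independently. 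Both are valid; yours has the merit of not importing anything from \cite{acpaper}.
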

\begin{proof} Since the graphs mentioned in part (c) are all $\omega$-ac, it suffices to show (a) implies (c). So suppose $G$ is $7$-ac. 
The proof of Theorem~2.12 of \cite{acpaper} shows that $G$ can have at most two vertices of degree $3$ or higher.
If all vertices have degree two, then $G$ is either homeomorphic to $(0,1)$ or $S^1$. If all vertices have degree no more than $2$, but not all are degree $2$, then $G$ is either homeomorphic to $[0,1]$ or $[0,1)$.
Otherwise, extending from the (at most two) vertices of degree at least $3$, there will be a finite family of: (finite) cycles, closed intervals (i.e.\ finite paths) or half-open intervals (i.e. one-way infinite paths). The half-open intervals give rise to the objects mentioned in the second bullet point.
\end{proof}

\end{document}